\documentclass[12pt]{amsart}
\usepackage{amsmath,amssymb,mathrsfs,amsthm,amsfonts,stmaryrd}
\SetSymbolFont{stmry}{bold}{U}{stmry}{m}{n}
\usepackage[top=1in,bottom=1in, left=1 in, right= 1 in, includehead,includefoot]{geometry}

\usepackage{hyperref}

\usepackage{thmtools}

\usepackage{graphicx}
\usepackage{enumerate}
\usepackage[sort&compress,numbers]{natbib}
\hypersetup{
     colorlinks=true,
     linkcolor=blue,
     filecolor=black,
     citecolor = blue,      
     urlcolor=blue,
     }

\usepackage{tikz}
\usetikzlibrary{decorations.markings}
\usetikzlibrary{arrows}
\usetikzlibrary{calc}

\theoremstyle{plain}
\newtheorem{theorem}{Theorem}[section]
\newtheorem{lemma}[theorem]{Lemma}
\newtheorem{proposition}[theorem]{Proposition}
\newtheorem{assumption}[theorem]{Assumption}

\theoremstyle{definition}

\newtheorem{remark}[theorem]{Remark}

\graphicspath{{Images/}}

\newcommand{\EE}{\mathbb{E}}
\newcommand{\C}{\mathbb{C}}
\newcommand{\N}{\mathbb{N}}

\newcommand{\R}{\mathbb{R}}

\newcommand{\Z}{\mathbb{Z}}

\newcommand{\ind}{\mathbf{1}}

\newcommand{\equh}{\begin{equation}}
\newcommand{\eque}{\end{equation}}

\def\fddto{\xrightarrow{\textit{f.d.d.}}}

\def\inddd#1{{\ind}_{\left\{#1\right\}}} 
\newcommand{\proba}{\mathbb P}
\newcommand{\esp}{{\mathbb E}}

\newcommand{\inv}{^{-1}}

\newcommand{\calD}{{\mathcal D}}

\newcommand{\B}{{\mathbb B}}

\def\summ#1#2#3{\sum_{#1 = #2}^{#3}}
\def\prodd#1#2#3{\prod_{#1 = #2}^{#3}}
\def\sif#1#2{\sum_{#1=#2}^\infty}

\newcommand{\eqd}{\stackrel{d}{=}}

\def\topp#1{^{(#1)}}

\def\abs#1{\left|#1\right|}

\def\ccbb#1{\left\{#1\right\}} 

\def\pp#1{\left(#1\right)}
\newcommand{\bpp}[1]{\Big(#1\Big)}
\def\spp#1{(#1)}

\def\floor#1{\left\lfloor #1 \right\rfloor}

\def\ceil#1{\left\lceil #1 \right\rceil}

\def\vv#1{{\boldsymbol #1}}

\newcommand{\vvr}{{\vv r}}
\newcommand{\vvx}{{\vv x}}
\newcommand{\vvy}{{\vv y}}
\newcommand{\vvz}{{\vv z}}
\newcommand{\vvc}{{\vv c}}

\def\qmand{\quad\mbox{ and }\quad}

\def\qmwith{\quad\mbox{ with }\quad}
\def\mfa{\mbox{ for all }}

\def\wt#1{\widetilde{#1}}

\def\limn{\lim_{n\to\infty}}

\renewcommand{\i}{{\mathrm i}}
\renewcommand{\d}{{\mathrm d}}
\newcommand{\rmRe}{{\rm Re}}
\newcommand{\rmRes}{{\rm Res}}
\newcommand{\supp}{{\rm supp}}

\newcommand{\cequiv}{\leftrightsquigarrow}
\newcommand{\ddfrac}[2]{\frac{\displaystyle #1}{\displaystyle #2}}

\numberwithin{equation}{section}

\newcommand\thankssymb[1]{*}
\begin{document}\sloppy
\author[Himwich]{Zoe Himwich}
\author[Wang]{Yizao Wang}
\address{Zoe Himwich: Department of Statistics, University of California, Berkeley, Berkeley, CA 94705, USA}
\email{himwich@berkeley.edu}
\address{Yizao Wang: Department of Mathematical Sciences, University of Cincinnati, 2815 Commons Way, Cincinnati, OH, 45221-0025, USA}
\email{yizao.wang@uc.edu}
\date{\today}

\title[The stationary measure of the open KPZ equation in the shock region]{The Laplace transform of the stationary measure of the open KPZ equation in the shock region}

\begin{abstract}
The stationary increment measure of the open Kardar--Parisi--Zhang equation on a bounded interval can be represented as the independent sum of a Brownian motion and another process defined via an explicit Radon--Nikodym derivative with respect to the measure induced by a Brownian motion. In the fan region, its finite-dimensional Laplace transform was previously expressed in terms of the continuous dual Hahn process. The main contribution here consists of computing the finite-dimensional Laplace transform for a range of parameters in the shock region by an analytic continuation argument. As an application, we provide a direct proof of the  convergence of the finite-dimensional distributions of the increment height function of open WASEP under the stationary measure in the shock region, by computing the convergence of finite-dimensional Laplace transform without relating to the open KPZ equation.
\end{abstract}\maketitle
  \tableofcontents

\section{Introduction}
\subsection{Laplace transform of the stationary measure of the open KPZ equation}
The open Kardar--Parisi--Zhang (KPZ) equation is a fundamental model for non-equilibrium systems in mathematical physics \citep{corwin18open,corwin22some}. 
The open KPZ equation on $[0,1]$ for $t>0$ is given by
\[
\partial_t H_{u,v}(t,x)
=
\frac12\partial_x^2H_{u,v}(t,x)
+\frac12\bigl(\partial_xH_{u,v}(t,x)\bigr)^2
+\xi(t,x),
\]
where $\xi$ is space-time white noise. The Neumann boundary conditions are given by $\partial_xH_{u,v}(t,x)|_{x=0}=u$ and
$\partial_xH_{u,v}(t,x)|_{x=1}=-v$ for $u,v\in\mathbb R$. Solutions to the open KPZ equation are defined via the
Hopf--Cole transform (see \citep[Section~1]{corwin24stationary} for the
precise formulation).

A major milestone in the studies of the open KPZ equation is the characterization of its stationary measure. 
Strictly speaking, the open KPZ equation does not have a stationary measure, and by stationary measure we are referring to the stationary measure of the {\em increment process} (or the stationary increment measure), 
\[
\pp{H_{u,v}(t,x)-H_{u,v}(t,0)}_{x\in[0,1]}.
\]
Namely, the stationary increment measure is the law on $C[0,1]$ such that if the above increment process starts from this law, then its law remains the same at every later time  $t>0$. We will continue to use `stationary measure' (instead of `the stationary measure of the increment process') throughout this paper for the sake of simplicity, and this is also the convention in most of the literature.

The stationary measure of the open KPZ equation is a probability  measure on $C[0,1]$. It is convenient to consider the corresponding stochastic process, which we let $(\eta\topp{u,v}_t)_{t\in[0,1]}$ denote  throughout. It is known that we have
\equh\label{eq:stationary open KPZ}
\pp{\eta\topp{u,v}_t }_{t\in[0,1]}\eqd \pp{\frac1{\sqrt 2}\B_t+\zeta\topp{u,v}_t}_{t\in[0,1]},
\eque
where $\B$ is a standard Brownian motion, and $\zeta\topp{u,v}$ is another stochastic process  in $C[0,1]$ to be described in a moment, and the two processes are independent. We let $\mathsf P_{u,v}$ denote the law of $\zeta\topp{u,v}$, a probability measure on $C[0,1]$. It can be defined, for all $u,v\in\R$, via the Radon--Nikodym derivative as 
\begin{equation}\label{eq:zeta-RN}
\frac{\d \mathsf P_{u,v}}{\d \mathbb{P}_{B/\sqrt2}}(\omega)
=
\frac{1}{\mathfrak K_{u,v}}
e^{-2v\omega_1}
\left(\int_0^1e^{-2\omega_t}\d t\right)^{-(u+v)}, \omega = (\omega_t)_{t\in[0,1]}\in C[0,1],
\end{equation}
where $\proba_{B/\sqrt 2}$ is the probability measure on $C[0,1]$ induced by a standard Brownian motion scaled by $1/\sqrt 2$, and $\mathfrak K_{u,v}$ is a normalizing constant. Here $B$ denotes a generic standard Brownian motion, to distinguish it
from $\B$ in \eqref{eq:stationary open KPZ}.

The Radon--Nikodym representation involving \eqref{eq:stationary open KPZ} and \eqref{eq:zeta-RN} was first derived formally by \citet{barraquand22steady}. Recently, \citet{dunlap25invariant} proved that the law of $\eta\topp{u,v}$ is invariant for the increment process of the open KPZ equation for all $u,v\in\R$. Together with the uniqueness established by  \citet{parekh25ergodicity}, this identifies the law of $\eta\topp{u,v}$ as the unique stationary measure (see also a related result by \citet{knizel22strong}). An earlier approach relating the open KPZ equation to the open WASEP \citep{corwin18open,corwin24stationary,bryc23markov,bryc22markov} can also lead to this characterization but only for $u+v\ge 0$. This approach will be recalled in detail in Section \ref{sec:open WASEP}.

The object of interest here is the Laplace transform of the finite-dimensional distributions of $\mathsf P_{u,v}$ (and we shall simply say finite-dimensional Laplace transform in the sequel) at points $0=x_0<x_1<\cdots<x_d=1$ with coefficients $c_1,\dots,c_d>0$:
\[
 \int_{C[0,1]} \exp\pp{-\summ k1d c_k\omega_{x_k}} \mathsf P_{u,v}(\d\omega)=\frac{\wt\Psi_{u,v}(\vvx,\vvc)}{\wt K_{u,v}},
\] 
where we define
\equh\label{eq:wt Psi def}
\wt\Psi_{u,v}(\vvx,\vvc) :=\esp\pp{ \exp\left(-\frac1{\sqrt2}\summ k1d c_k\B_{x_k}-\sqrt2v\B_1\right)\left(\int_0^1e^{-\sqrt2\B_t}\d t\right)^{-(u+v)}},
\eque
and $\wt K_{u,v}$ similarly (see \eqref{eq:wt K def}). Consider the simplest case $u+v>0$. Set $s_j:=\sum_{\ell=j}^d c_\ell$ and assume
\[
-2u<-s_1<\cdots<-s_d<2v.
\]
Then we have
\equh\label{eq:wt Psi}
\wt\Psi_{u,v}(\vvx,\vvc)
=
\int_{(0,\infty)^d}
\exp\pp{
-\frac14\sum_{k=1}^d
(x_k-x_{k-1})y_k
}
\mathfrak p_{-s_1}^{u,v,c}(y_1)
\prod_{k=1}^{d-1}
\mathfrak p_{-s_k,-s_{k+1}}^{u,v,c,c}
(y_k,y_{k+1})
\d\vvy.
\eque
The density functions $\mathfrak p_s^{u,v,c}$ and $\mathfrak p_{s,t}^{u,v,c,c}$ correspond to the continuous dual Hahn measures and their expressions can be found in Section \ref{sec:limiting measures}.

The integral on the right-hand side of \eqref{eq:wt Psi} originates in \citet{corwin24stationary}. \citet{barraquand22steady} first formally derived the identity \eqref{eq:wt Psi} using Liouville quantum mechanics. In a different approach, \citet{bryc23markov} recognized the right-hand side of \eqref{eq:wt Psi}, after normalization, as the Laplace transform of the increment process of a Markov process constructed from the Yakubovich heat kernel \citep{yakubovich11heat}. \citet{bryc22markov} then proved that the law of the increment process has  the Radon--Nikodym representation in \eqref{eq:zeta-RN}, yielding the first rigorous proof of \eqref{eq:wt Psi}. We take \eqref{eq:wt Psi} as the starting point of our analysis.

Although $\wt\Psi_{u,v}$ defined in \eqref{eq:wt Psi def} is entire in $(u,v)$, we emphasize that we initially use \eqref{eq:wt Psi} under the conditions stated above including in particular $u+v>0$ (corresponding to the fan region). The expressions for $\mathfrak p_{-s_1}^{u,v,c}$ and $\mathfrak p_{-s_k,-s_{k+1}}^{u,v,c,c}$ involve Gamma functions and have poles. Hence the right-hand side of \eqref{eq:wt Psi} takes a different form when extended to the shock region. 

Our main contribution is to provide explicit expressions of $\wt\Psi_{u,v}(\vvx,\vvc)$ for parameter values  $(u,v)$ in the shock region, by an analytic continuation argument and considering $(u,v)\in\C^2$ with $\rmRe(u)+\rmRe(v)<0$. This argument is applied  in an iterative manner, making the formula much more involved. 
Under certain assumptions on $u,v,\vvc$ (see Proposition \ref{prop:path decomp} below), the expression takes the form
\equh\label{eq:limit path decomp}
\wt \Psi_{u,v}(\vvx,\vvc) = H_\emptyset(u)+\summ\ell1j\sum_{\vvr\in\mathbb S_\ell\topp i}H_\vvr(u) + \summ\ell{j+1}d\sum_{\vvr\in\mathbb S_\ell\topp{i-1}}H_\vvr(u),
\eque
where $H_\emptyset(u)$ is exactly the right-hand side of \eqref{eq:wt Psi}, $i\in\Z_{\ge0},j\in\{0,\dots,d\}$ are such that $s_{j+1,i}/2<\rmRe(u)<s_{j,i}/2$ (see Proposition \ref{prop:path decomp} for the definition of $s_{j,i}$), $\mathbb S_\ell\topp i = \{(r_1,\dots,r_\ell)\in\Z_{\ge0}^\ell: r_1\le \cdots\le r_\ell\le i\}$, and for each $\vvr =(r_1,\dots,r_\ell)$,  $H_\vvr(u)$ is a $(d-\ell)$-multiple integral over $(0,\infty)^{d-\ell}$ with respect to certain auxiliary measures. When $i=j=0$, the two double summations are understood as zero. The restriction relating $v$ and $\vvc$ is $-c_d<2\rmRe(v)$ for complex-valued $v$ (i.e., for fixed $v$ we first take $c_d$ large enough). Then, an analytic continuation argument is applied to $\wt\Psi_{u,v}(\vvx,\vvc)$ as a function of $u$. The extra $H_\vvr(u)$ terms compared to the right-hand side of \eqref{eq:wt Psi} are the residue terms collected during the analytic continuation.

We refer to \eqref{eq:limit path decomp} as a path decomposition of $\wt\Psi_{u,v}(\vvx,\vvc)$, interpreting the domain of each integral as a certain path space (corresponding to a trajectory with the first $\ell$ steps at certain deterministic atoms, and then the remaining $d-\ell$ steps in $(0,\infty)$). 
Moreover, we restrict ourselves to a range of parameters $u,v$ in the shock region that is sufficient for our application. Namely we shall assume $(u,v)\in\calD$ (see \eqref{eq:calD} below), which is dense in $\C^2$, and for those values in $\C^2\setminus\calD$ we can apply an approximation argument (see Remark \ref{rem:D approx}). Then, for each $(u,v)\in\calD\cap\R^2$ fixed, our formula requires further $\vvc$ to be in a subregion of $\R^d$. The values of the Laplace transform on this subregion determine the finite-dimensional distributions of $\mathsf P_{u,v}$ and hence suffices for our purposes. Extending the formula to other values of $\vvc$ would require keeping track of further pole crossings during the analytic continuation, which we choose not to pursue here.

Analytic continuation is not new in the study of the stationary measure of the open KPZ equation. When studying the fan region, such an argument has been applied to analyze the corresponding normalizing constant $\wt K_{u,v}$ there \citep[Section 3.3.2]{bryc23markov}; more precisely, the continuation of $\wt K_{u,v}$ as a function of $u,v$ concerns deforming a single contour. This argument serves as the starting point of our proof. Here, the argument is more involved because it involves multiple contour integrals. 
\begin{remark}
Related analytic-continuation arguments have recently appeared in the study of stationary log-gamma polymers. \citet{barraquand26integral} showed that the finite-dimensional two-layer Whittaker formula admits analytic continuation in the boundary parameters, but made the residue corrections explicit only in the one-dimensional case in an application to the KPZ growth rate. Subsequently, \citet{bryc26stationary} gave an explicit residue expansion for the normalizing constant, including cases in which poles coalesce. The analysis of our $\wt K_{u,v}$ is a direct analogue of this one-contour problem. 
Neither work gives an explicit residue expansion for the finite-dimensional Laplace transform analogous to the one derived here.
\end{remark}

\subsection{Convergence of the stationary measure of open WASEP}
\label{sec:open WASEP}

Open ASEP has been an extensively investigated model in integrable probability. Its relatively explicit structure makes it possible to make many precise calculations, and hence provides access to the open KPZ equation.  
It has been known that the scaling limits of the height function under the stationary measures of the open ASEP, when the parameters are suitably chosen, may lead to various different limiting processes \citep{bryc19limit,bryc23asymmetric,corwin24stationary,wang25asymmetric}. 

Before the recent result by \citet{dunlap25invariant}, there was another approach to characterizing the stationary measure of the open KPZ equation via the open ASEP in the weakly asymmetric regime, also known as the open WASEP. \citet{corwin18open} established the convergence of the open WASEP dynamics to the open KPZ equation, and their setup was later extended by \citet{parekh19KPZ}. Building on this result, \citet{corwin24stationary} proved that the laws of the rescaled height functions of the open WASEP under its stationary measure are tight and that every subsequential limit is a stationary measure for the open KPZ equation. Subsequently, \citet{parekh25ergodicity} established uniqueness of this stationary measure for all $u,v\in\mathbb R$, so the entire sequence of rescaled height functions converges in distribution to it. This is in a sense a soft argument as identifying the limit through uniqueness in this way does not require an explicit formula for its law. 

Next, in order to obtain a precise description of the law of $\eta\topp{u,v}$, one could exploit further the integrable structure of the open WASEP and analyze its asymptotics. 
In short, by a probabilistic version of Derrida's ansatz \citep{derrida93exact} due to \citet{bryc17asymmetric}, one can express the probability generating function of the stationary measure of the open ASEP in the fan region in terms of the so-called Askey--Wilson measures \citep{bryc10askey}. This method makes it possible to carry out explicit and refined calculations of the finite-dimensional Laplace transforms of the open ASEP height function under the stationary measure, as first demonstrated by \citet{bryc19limit} for the open ASEP.  For the open WASEP in the so-called {\em fan region}, \citet{corwin24stationary} applied this method and computed explicitly the finite-dimensional Laplace transform for some choices of $u,v,\vvx,\vvc$, and their expressions were later related to $\mathsf P_{u,v}$ by \citet{bryc23markov} and \citet{bryc22markov} in the fan region. A formal derivation also appeared in \citet{barraquand22steady}.

Thanks to our first result on the Laplace transform of $\mathsf P_{u,v}$ (and hence also of $\eta\topp{u,v}$) in the shock region, we can now compute the limiting Laplace transforms of the open WASEP directly, without relying on its connection to the open KPZ equation (the aforementioned soft argument). 
To complete this limiting Laplace transform program for the shock region, there are two steps. 
\begin{enumerate}[(i)]

\item First, for the pre-limit open ASEP with each $n\in\N$, one needs to have a corresponding Askey--Wilson representation of the finite-dimensional Laplace transform in the shock region. This turns out to be doable, although the corresponding Askey--Wilson measures are no longer probability measures but now signed measures \citep{wang24askey}.  

\item Second, one needs to compute the limiting finite-dimensional Laplace transforms of the open WASEP, and identify them with those of $\eta\topp{u,v}$. This computation is by now a standard, though lengthy, calculation, and the identification uses the finite-dimensional Laplace transform of $\mathsf P_{u,v}$  developed in the main contribution here. 
\end{enumerate}

The second part of the paper (Section \ref{sec:convergence}) is devoted to a direct proof of the convergence of the finite-dimensional distributions of the stationary measure of the open WASEP, by showing the convergence of the corresponding finite-dimensional Laplace transforms. This convergence alone is no longer new, as it now can be obtained indirectly by combining the convergence and uniqueness results \citep{corwin18open,corwin24stationary,parekh25ergodicity} and the explicit characterization by \citet{dunlap25invariant}. A remarkable discovery here is that for the pre-limit model, the finite-dimensional Laplace transform again can be expressed as the ratio of two terms, and the numerator $\Phi\topp n\equiv \Phi\topp n_{u,v}(\vvx,\vvc)$ has a path decomposition similar to the one of $\wt\Psi_{u,v}(\vvx,\vvc)$ in \eqref{eq:limit path decomp}. Indeed, we shall show that 
\equh\label{eq:prelimit decomp}
\Phi^{(n)}
=
\Phi^{(n),-}+\Phi_{\emptyset}^{(n)}
+\sum_{\ell=1}^{j}\ \sum_{\vvr\in\mathbb S_\ell\topp i}
 \Phi_{\vvr}^{(n)}
+\sum_{\ell=j+1}^{d}\ \sum_{\vvr\in\mathbb S_\ell\topp{i-1}}
 \Phi_{\vvr}^{(n)}, 
\eque
and further prove that
\begin{align*}
\limn n^{u+v} \Phi_\vvr\topp n &= \Gamma(u+v+2)e^{G(\vvx,\vvc)}H_\vvr(u), \mbox{ for legitimate } \vvr,\\
\limn n^{u+v}\Phi_\emptyset\topp n & = \Gamma(u+v+2)e^{G(\vvx,\vvc)}H_\emptyset(u),\\
\limn n^{u+v} \Phi^{(n),-} &= 0,
\end{align*}
 where $e^{G(\vvx,\vvc)}$ is the finite-dimensional Laplace transform of the corresponding Brownian component in \eqref{eq:stationary open KPZ} (for its formula, see \eqref{eq:G}) under suitable assumptions on $u,v,\vvc$. 
 Note that the assumptions for the limit theorems to hold are more restrictive than the ones for the decomposition to hold; for a comparison see Remark \ref{rem:uv}.
The approach avoids completely passing through the open KPZ equation. At a few places we also simplify the treatments for the fan region \citep{corwin24stationary} (see Remarks \ref{rem:CK}, \ref{rem:DCT}, and \ref{rem:proof for fan}). 
\begin{remark}
When computing the convergence from the stationary measure of the open ASEP to the stationary measure of the open KPZ fixed point, \citet{wang25asymmetric} took the same approach as here and encountered a similar problem. Therein, the authors managed to establish the convergence of the finite-dimensional Laplace transform, where the pre-limit and the limiting formulas are similar to the ones in \eqref{eq:prelimit decomp} and \eqref{eq:limit path decomp} respectively. However, the authors were unable to relate the limiting Laplace transform directly to the stationary measure also defined via a Radon--Nikodym derivative. Instead, the authors used a soft argument that crucially relies on a recent result of \citet{bryc24two} on the two-layer representation of the open TASEP (totally asymmetric simple exclusion process). We expect that a similar analytic continuation argument can relate the limiting finite-dimensional Laplace transform to the Radon--Nikodym derivative representation (which was known for the fan region) directly.
\end{remark}
\begin{remark}
An alternative approach to proving the convergence of the stationary measure of the open WASEP is via the so-called two-layer representations. In words, under Liggett's condition \citep{liggett75ergodic} the stationary measure of the open ASEP can be represented in terms of the law of two independent random walks up to a change of measure. \citet{himwich24stationary} exploited the Enaud--Derrida representation \citep{enaud04large} of the stationary measure of the open ASEP and proved such convergence in the fan region, although the involved representation is valid only in this region and hence the approach cannot be extended to the shock region directly. More recently, \citet{bryc25stationary} found a new two-layer representation of the stationary measure of the open ASEP that is valid for both the fan and the shock regions. This representation has the potential to lead to another proof of the convergence to the stationary measure of the open KPZ equation in the shock region.
\end{remark}

\subsection*{Outline of the paper}
In Section \ref{sec:limiting measures}, we introduce the auxiliary
measures and provide a path decomposition of the finite-dimensional Laplace transform. The proof of this main result is provided in Section \ref{sec:proof Laplace}. In Section \ref{sec:convergence}, we prove the convergence of the finite-dimensional distributions of the rescaled stationary height function of the open WASEP.

\subsection*{Statement on the usage of AI}We started working on this problem around the end of 2024. By the spring of 2026, we had obtained essentially all the results presented here and written most of the first draft without using AI. ChatGPT was then used to proofread the mathematics and help improve the writing and the presentation. In particular, ChatGPT helped find an improvement in the dominated convergence argument used in Section \ref{sec:convergence}; see Remark \ref{rem:DCT}.

\subsection*{Acknowledgements}
The authors would like to thank Zongrui Yang for his involvement in the early stage of this project and for helpful discussions. YZ thanks Wlodek Bryc for several helpful discussions. ZH was supported by NSF MSPRF 2503374. YW was partially supported by 
the Simons Foundation (MP-TSM-00002359), and a Taft Center Fellowship (2025--2026) at the University of Cincinnati.

\section{Finite-dimensional Laplace transforms}\label{sec:limiting measures}

Our goal is to provide an explicit formula for the finite-dimensional Laplace transform of $\mathsf P_{u,v}$. For convenience we let $\zeta\topp{u,v}$ denote a process with law $\mathsf P_{u,v}$. 
Throughout the section, we assume $d\in\Z_{>0}$, $0=x_0<x_1<\cdots<x_d=1$, and $c_i>0$ for $i\in\llbracket d\rrbracket=\{1,\dots,d\}$. 
We write
\equh\label{eq:Laplace fdd}
   \esp\exp\pp{-\summ k1d c_k \zeta_{x_k}\topp{u,v}}
=
  \frac{\wt\Psi_{u,v}(\vvx,\vvc)}{\wt K_{u,v}},
\eque
with
\begin{align}
\wt\Psi_{u,v}(\vvx,\vvc) &:=\esp\pp{ \exp\left(-\frac1{\sqrt2}\summ k1d c_k\B_{x_k}-\sqrt2v\B_1\right)\left(\int_0^1e^{-\sqrt2\B_t}\d t\right)^{-(u+v)}},\nonumber\\
\wt K_{u,v} &:=\esp\pp{e^{-\sqrt2v\B_1}\left(\int_0^1e^{-\sqrt2\B_t}\d t\right)^{-(u+v)}}.\label{eq:wt K def}
\end{align}
We shall allow $u,v$ to take complex values. 

 Our expressions shall involve a family of complex-valued measures  $\mathfrak p_s^{u,v}(\d x)$ and kernels $\mathfrak p_{s,t}^{u,v}(x,\d y)$. Their full definition will be provided in Section \ref{sec:CDH}. 
For $t_1<\cdots<t_d$ and $t_1,\dots,t_d\in\mathbb T_{u,v}$ (to be defined below), we introduce the auxiliary measure
\equh\label{eq:mu}
\mu_{t_1,\dots,t_d}^{u,v}(\d\vvy):=\mathfrak p_{t_1}^{u,v}(\d y_1)\mathfrak p_{t_1,t_2}^{u,v}(y_1, \d y_2)\cdots\mathfrak p_{t_{d-1},t_d}^{u,v}(y_{d-1},\d y_d).
\eque
That is, for every measurable function $f:\C^d\to\C$ for which the following integral is absolutely convergent, we write
\[
\int f\d \mu_{t_1,\dots,t_d}^{u,v} = \int\cdots\int f(y_1,\dots,y_d)\mathfrak p_{t_{d-1},t_d}^{u,v}(y_{d-1},\d y_d)\cdots \mathfrak p_{t_1,t_2}^{u,v}(y_1, \d y_2)\mathfrak p_{t_1}^{u,v}(\d y_1).\]
We first introduce
\equh\label{eq:calD}
\calD := \ccbb{(u,v)\in\C^2: \rmRe(v),\rmRe(u)\notin\Z_{\le 0},  \rmRe(u)-\rmRe(v)\notin\Z \mbox{ if } \rmRe(u),\rmRe(v)<0},
\eque
and 
\[
\mathbb T_{u,v}:=\ccbb{t\in\R:\rmRe(u)+\frac t2\notin\Z_{\le 0}, \rmRe(v)-\frac t2> 0}, \quad (u,v)\in\calD.
\]
Next, we introduce, for all $(u,v)\in\calD$, 
\equh\label{eq:K_u,v}
K_{u,v} := \int_\C e^{-y/4}\mathfrak p_0^{u,v}(\d y),
\eque
and, with 
 $s_j:=\sum_{k=j}^d c_k$ for $j\in\llbracket d\rrbracket$ and 
$-s_1,\ldots,-s_d\in\mathbb T_{u,v}$,
\equh\label{eq:Psi}
\Psi_{u,v}(\vvx,\vvc) := \int_{\C^d}\exp\pp{-\frac14\summ k1d y_k(x_k-x_{k-1})}\mu_{-s_1,\dots,-s_d}^{u,v}(\d\vvy).
\eque
We can now state the main result of this section.
\begin{theorem}\label{thm:AC}
For all $(u,v)\in\calD$, 
\equh\label{eq:denom equal}
K_{u,v} = \wt K_{u,v}.
\eque
In addition, for every $(u,v)\in\calD$ and $d\in\Z_{>0}$, $0 = x_0<x_1<\cdots<x_d=1$, assume further
\equh\label{eq:c close} c_1,\dots,c_d>0 \qmand
\sum_{k=1}^{d-1}c_k<1,
\eque
such that $-s_1,\dots,-s_d\in\mathbb T_{u,v}$, 
\equh\label{eq:num equal}
\Psi_{u,v}(\vvx,\vvc) = \wt\Psi_{u,v}(\vvx,\vvc).
\eque
In particular, for $(u,v)\in\calD\cap\R^2$, 
\[
\esp\exp\pp{-\summ k1d c_k\zeta_{x_k}\topp{u,v}}
=
\frac{\Psi_{u,v}(\vvx,\vvc)}{K_{u,v}}.
\]
\end{theorem}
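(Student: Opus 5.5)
Both identities will be proved by analytic continuation in $u$ (and, where needed, in $v$), starting from the fan region where they are already known. Fix $v$ with $\rmRe(v)$ large enough that $-s_d<2\rmRe(v)$; this is automatic once $-s_1,\dots,-s_d\in\mathbb T_{u,v}$. Then consider both sides of \eqref{eq:num equal} as functions of $u$.

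\emph{Right-hand side.} The function $\wt\Psi_{u,v}(\vvx,\vvc)$ is entire in $u$. Indeed, $\int_0^1e^{-\sqrt2\B_t}\d t$ has all positive and negative moments together with Gaussian-type tails on the exponential scale, so the expectation in \eqref{eq:wt Psi def} converges locally uniformly in $(u,v)\in\C^2$.

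\emph{Left-hand side.} When $\rmRe(u)+\rmRe(v)>0$ and the ordering $-2u<-s_1<\cdots<-s_d<2v$ holds, the definition of $\mathfrak p$ in Section~\ref{sec:CDH} reduces to the continuous dual Hahn densities. In that case $\Psi_{u,v}=\wt\Psi_{u,v}$ is exactly \eqref{eq:wt Psi}, as proved by Bryc--Kuznetsov, and likewise $K_{u,v}=\wt K_{u,v}$.

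The key step is to show that, for fixed $v,\vvx,\vvc$, the map $u\mapsto\Psi_{u,v}(\vvx,\vvc)$ extends analytically along a path in $u$ decreasing to the target value. The extension should coincide with the formula \eqref{eq:Psi}, with the measures $\mathfrak p^{u,v}$ now carrying atoms.

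\emph{Mechanism.} Write each continuous dual Hahn density as a contour integral over $\i\R$ in the spectral variable, $y=-4\kappa^2$ or similar, with Gamma factors such as $\Gamma(u+t/2\pm\kappa)$. As $\rmRe(u)+t/2$ crosses $0,-1,-2,\dots$, a pole of these Gamma factors crosses the contour. Deforming the contour, or equivalently adding the residue, yields precisely the atom of $\mathfrak p_t^{u,v}$ at the corresponding point. This is exactly how $\mathbb T_{u,v}$ excludes $\rmRe(u)+t/2\in\Z_{\le0}$.

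I would do this first for $K_{u,v}$, which is a single contour, following \citep[Section 3.3.2]{bryc23markov}. Both sides are then analytic on $\calD$ minus the crossing points, agree on an open set, and the identity theorem gives \eqref{eq:denom equal}. If the path must pass through $\rmRe(u)+\rmRe(v)=0$, I would also continue in $v$. The condition $\rmRe(u)-\rmRe(v)\notin\Z$ in $\calD$ avoids coalescing poles between the $u$ and $v$ Gamma factors.

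\emph{Multidimensional case.} For \eqref{eq:num equal} I would proceed by induction along the chain $\mathfrak p_{-s_1}\mathfrak p_{-s_1,-s_2}\cdots$, treating the innermost integration variable first. When a pole crosses in the variable $y_k$, the residue produces an atom in $y_k$. Substituted into the transition kernel $\mathfrak p_{-s_k,-s_{k+1}}(y_k,\cdot)$, this atom creates new Gamma factors in $y_{k+1}$ whose poles must in turn be tracked. This is where the condition \eqref{eq:c close}, $\sum_{k<d}c_k<1$, enters: all $-s_k$ then lie within an interval of length less than $2$. I expect this to ensure that each new crossing happens consistently across the kernels, so that the atoms of the continued kernels are exactly those encoded in the definition of $\mathfrak p^{u,v}_{s,t}$, with no extra poles left over. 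This yields the path decomposition of Proposition~\ref{prop:path decomp}.

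\emph{Main obstacle.} The hardest part is the bookkeeping of these iterated residues, together with uniform integrability. One must show that each term is analytic in $u$ away from the crossing hyperplanes and locally bounded, so that dominated convergence applies to the remaining $(0,\infty)^{d-\ell}$ integrals. One must also show that no pole sits on the contour exactly when $(u,v)\in\calD$ and $-s_k\in\mathbb T_{u,v}$.

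\emph{Conclusion.} Once both sides are analytic in $u$ on a connected domain containing a fan-region segment and agree there, \eqref{eq:num equal} follows from the identity theorem. The final display then follows from \eqref{eq:Laplace fdd}, using that $K_{u,v}=\wt K_{u,v}>0$ for real $(u,v)$.
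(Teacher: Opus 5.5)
Your plan is essentially the paper's proof. Both $\wt K_{u,v}$ and $\wt\Psi_{u,v}$ are entire and the fan-region identity is the seed; continuing in $u$ across $\rmRe(u)\in\Z_{\le0}$ (for $K_{u,v}$) or $\rmRe(u)=s_k/2-i$ (for $\Psi_{u,v}$), the residues of the $\Gamma(u-s_k/2\pm z_k/2)$-type factors reproduce the atoms, and each new atom at time $-s_k$ feeds a factor $\Gamma(u-s_{k+1}/2\pm z_{k+1}/2)$ into the next variable (so the crossings start at $z_1$, the earliest time, not the innermost variable), with $\sum_{k<d}c_k<1$ keeping the crossings ordered and the identity theorem concluding.

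The steps you only anticipate are the ones the paper actually carries out. Lemma~\ref{lem:Alexey} makes the crossing rigorous: the two poles $\pm2(u-k)$ cross $\i\R$ simultaneously, and evenness of $\wt h$ doubles the residue to $8\pi\i R_k$. The Gamma identity \eqref{eq:ell=d-1} matches each residue with $\mathfrak p^{u,v,d,d}_{-s_\ell,-s_{\ell+1}}$. For $K_{u,v}$, the paper moves along $(u+\lambda,v+\lambda)$ so that $u-v$ stays fixed and the $u$- and $v$-poles never coalesce along the path, which a fixed-$v$ continuation does not guarantee.
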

\begin{remark}
It is well-known that the values of the finite-dimensional Laplace transform $\Psi_{u,v}(\vvx,\vvc)/K_{u,v}$ for $\vvc$ from an open set of $\R^d$ determine the corresponding finite-dimensional distributions of the process. We shall also use the fact that if for a pre-limit model the finite-dimensional Laplace transform with coefficients $\vvc$ from an open set of $\R^d$ converges to $\Psi_{u,v}(\vvx,\vvc)/K_{u,v}$, then the corresponding finite-dimensional distribution of the pre-limit model also converges to the ones of $\zeta\topp{u,v}$  \citep[Appendix A]{bryc19limit}.
\end{remark}
The proof is essentially an analytic continuation argument. It is clear that $\wt K_{u,v}$ and $\wt\Psi_{u,v}(\vvx,\vvc)$ are entire on $\C^2$ as functions of $(u,v)$. For completeness we prove this result in Proposition \ref{prop:analytic RN} later. 

We focus on $\Psi_{u,v}(\vvx,\vvc)$ through the rest of this section. 
While its expression in \eqref{eq:Psi} takes the identical form $\int f\d\mu^{u,v}_{-s_1,\dots,-s_d}$, the measure can take drastically different forms for different $u,v,\vvc$. The main contribution here is to have a more detailed description of this measure, which leads to a path decomposition of the expression of $\Psi_{u,v}(\vvx,\vvc)$ as in \eqref{eq:limit path decomp}. This result is Proposition \ref{prop:path decomp}. 

Below we provide an overview of the rest of this section. In the initial fan-region domain, the   auxiliary measure $\mu_{-s_1,\dots,-s_d}^{u,v}$ in \eqref{eq:Psi} under $-2u<-s_1<\cdots<-s_d<2v$ is supported on $(0,\infty)^d$ and can be thought of, heuristically, as the finite-dimensional distribution of a Markov process at times $-s_1,\dots,-s_d$ (strictly speaking this is not a finite-dimensional distribution as the starting measure is an infinite measure). 

We shall fix $v$ and then apply the analytic continuation argument to $u$. 
As $\mathrm{Re}(u)$ decreases, the support of the auxiliary measure is contained in
\equh\label{eq:support}
\pp{(0,\infty)\cup \mathbb A_{-s_1}^u}\times \cdots\times\pp{(0,\infty)\cup \mathbb A_{-s_d}^u},
\eque
where $\mathbb A_{-s_j}^u$ (defined below in \eqref{eq:A}), when nonempty, gives the possible atomic locations of the $j$th coordinate of the measure. We shall heuristically think of the auxiliary measure again as the finite-dimensional distribution of a Markov process (even though the measure is infinite in the fan region, and need not be positive in the shock region). Then, an element from the set \eqref{eq:support} can be viewed as a candidate trajectory of $d$ steps. The transition kernels determine which of these trajectories can occur. In particular, one readily checks that the trajectory now may jump from the discrete domain $\mathbb A_{-s_j}^u$ into the continuous domain $(0,\infty)$ (but not in the opposite direction under our choice of parameters).

It turns out that for $d\ge 2$ the support of the auxiliary measure can be much smaller than \eqref{eq:support}: once the trajectory is within $(0,\infty)$ it will remain there for the rest. In this way, the support can be decomposed further into disjoint components depending on the atomic part at the beginning of each trajectory. We refer to this result, Proposition \ref{prop:path decomp}, as the path decomposition of the support of the auxiliary measure. The condition $\sum_{k=1}^{d-1}c_k<1$ is introduced so that the classification of the admissible paths can be encoded in a simple way. This decomposition then induces  a path decomposition of the Laplace transform: we simply write the integral on the right-hand side of \eqref{eq:Psi} as a summation of the same integral but over each component of the path decomposition of the domain. 

In the rest of this section, we shall introduce the auxiliary measure and state the result on the path decomposition of $\Psi_{u,v}(\vvx,\vvc)$. Most of the proofs, and also the analysis of the denominator $K_{u,v}$, are provided  in the next section.

\begin{remark}\label{rem:D approx}
Our expressions for $K_{u,v}$ and $\Psi_{u,v}(\vvx,\vvc)$ extend continuously 
to all  $(u,v)\in\C^2$ with $\rmRe(v)\ge -c_d/2$ as follows. We assume always that $\vvc$ satisfies \eqref{eq:c close}. Write 
\[
\mathcal A_{\vvc}
:=\{(u,v)\in\calD:
    -s_j\in\mathbb T_{u,v}\text{ for every }j\in\llbracket d\rrbracket\}.
\]
By Proposition \ref{prop:analytic RN}, both $\wt K_{u,v}$ and
$\wt\Psi_{u,v}(\vvx,\vvc)$ are entire in $(u,v)$. Hence
\eqref{eq:denom equal} and \eqref{eq:num equal} imply
\[
\wt K_{u,v}
=\lim_{\substack{(u',v')\to(u,v)\\(u',v')\in\calD}}K_{u',v'}, \quad \mfa (u,v)\in\C^2,
\]
and
\[
\wt\Psi_{u,v}(\vvx,\vvc)
=\lim_{\substack{(u',v')\to(u,v)\\(u',v')\in\mathcal A_{\vvc}}}
  \Psi_{u',v'}(\vvx,\vvc)
\quad \mfa (u,v)\in\C^2, \rmRe(v)\ge -c_d/2.
\]
Some  parameters in $\C^2\setminus\calD$ admit direct, term-by-term evaluation
of the auxiliary-measure formulas. For example, take $u=-1/2+\i$ and $v=-1/2$. Although $(u,v)\notin\calD$, the formulas below show that the continuous density
and both atomic weights in $\mathfrak p_0^{u,v}$ are well defined. 
Thus \eqref{eq:K_u,v} can be evaluated directly at this choice of $(u,v)$.
Such a plug-in evaluation is not always possible. For example,  let $u=-1/2+\epsilon$ and
$v=-1/2-\epsilon$, for sufficiently small real $\epsilon\ne0$. The continuous contribution to $K_{u,v}$ vanishes, and its two
atomic contributions are
\[
-\frac{e^{u^2}}{2\epsilon}
+\frac{e^{v^2}}{2\epsilon}
=
e^{1/4+\epsilon^2}
\frac{\sinh\epsilon}{\epsilon}
\to e^{1/4},
\]
while neither has a finite limit separately. 
Hence the extension at $u=v=-1/2$ requires taking the limit as mentioned above.
\end{remark}

\subsection{Auxiliary measures}\label{sec:CDH}
We introduce three auxiliary objects. We refer to $\mathfrak p_t^{u,v}$ as the marginal  measure, to $\mathfrak p_{s,t}^{u,v}$ as the kernel, and to $\mu_{t_1,\ldots,t_d}^{u,v}$ as the finite-dimensional auxiliary measure. The naming follows from the fact that they can be associated to a Markov process in the fan region. For each fixed $x$, the kernel $\mathfrak p_{s,t}^{u,v}(x,\d y)$ is a measure. For general complex parameters $u,v$ all these measures are complex measures, while for real parameters they are signed ones. A more detailed summary is provided in Remark \ref{rem:measures}. Recall the definition of $\mu_{t_1,\dots,t_d}^{u,v}$ in \eqref{eq:mu}.

We introduce $\mathfrak p_s^{u,v}$ and $\mathfrak p_{s,t}^{u,v}$. For $u,v\in\R$ and $u+v>0$, $\mathfrak p_{s,t}^{u,v}$ can be interpreted as a probability transition kernel, and the corresponding Markov process is referred to as the continuous dual Hahn process \citep{bryc12quadratic,bryc22continuous,corwin24stationary}. Then, $\mathfrak p_s^{u,v}$ is an infinite $\sigma$-finite measure satisfying $\int \mathfrak p_s^{u,v}(\d x) \mathfrak p_{s,t}^{u,v}(x,\d y) = \mathfrak p_t^{u,v}(\d y)$. 
(The measure $\mathfrak p_s^{u,v}$ is not a continuous dual Hahn measure but can be related to the latter via limit theorems; see \citep[Proof of Theorem 4.1]{bryc22continuous}.) The boundary case $u+v=0$ is degenerate and the auxiliary measure is concentrated on a single deterministic path. The interpretation via a Markov process is lost when $u+v<0$. See Remark \ref{rem:measures} for a detailed description. For complex parameters $u,v$, the measure $\mathfrak p_s^{u,v}$ is a locally finite complex-valued Borel measure of which the support is within $\C$, and it does not necessarily have finite mass. Similarly, $\mathfrak p_{s,t}^{u,v}$ may be complex-valued.

\subsubsection{Marginal measures}
Fix $(u,v)\in\calD$. For $t\in\mathbb T_{u,v}\cup \{0\}$, the measure $\mathfrak p_t^{u,v}$ has an absolutely continuous part on $(0,\infty)$ and possibly a finite number of atoms at locations 
\begin{align}
u_{t,j} &:= -4\pp{u+\frac t2+j}^2,\quad j=0,\dots,j_{u,t}:=\ceil{-\rmRe(u)-t/2-1},\label{eq:j_u,t}\\
v_{t,k} &:= -4\pp{v-\frac t2+k}^2, \quad k=0,\dots,k_{v,t}:=\ceil{-\rmRe(v)+t/2-1}.\nonumber
\end{align} 
(Note that the atom locations may be in $\C\setminus\R$.) 
For $j\in \llbracket 0, j_{u,t}\rrbracket$ (we write $\llbracket k,\ell\rrbracket = \{k,\dots,\ell\}$ for $k,\ell\in\Z, k\le\ell$), we refer to $u_{t,j}$ as the location of the $j$-th {\em candidate} atom associated to parameter $u$ at time $t$ provided $j_{u,t}\ge0$, and similarly for the location of the $k$-th candidate atom $v_{t,k}$ when $k\in\llbracket 0, k_{v,t}\rrbracket$. We say `candidate' because the weight may be zero; see Remark \ref{rem:candidate atoms} below.
Then, we have
\equh\label{eq:CDH marginal}
\mathfrak p_t^{u,v}(\d y) := \mathfrak p_t^{u,v,c}(y)\d y
+ \sum_{j=0}^{j_{u,t}}\mathfrak p_t^{u,v,d}(u_{t,j})\delta_{u_{t,j}}(\d y)
 + \sum_{k=0}^{k_{v,t}}\mathfrak p_t^{u,v,d}(v_{t,k})\delta_{v_{t,k}}(\d y),
\eque
where $\delta_z$ denotes the unit point measure at $z$.
The density and atomic masses are given by
\begin{align}
\mathfrak p_t^{u,v,c}(y) & := \frac{1}{8\pi\Gamma(u+v)}\ddfrac{{\Gamma\pp{v-\frac t2\pm\i\frac{\sqrt{y}}2,u+\frac t2\pm\i\frac{\sqrt{y}}2}}}{\sqrt{y}\Gamma(\pm\i\sqrt{y})} \inddd{y>0},
\label{eq:pc}\\
    \mathfrak{p}_{t}^{u,v,d}(u_{t,j}) & := \frac{\Gamma\left(v-u-t\right)}{\Gamma\left(-2u-t\right)} \frac{[2u+t,v+u]_{j}(2u+t+2j)}{[1,1+u-v+t]_{j}(2u+t)},  \quad j\in\llbracket 0,j_{u,t}\rrbracket ,\label{eq:pd u}\\
    \mathfrak{p}_{t}^{u,v,d}(v_{t,k}) & := \frac{\Gamma\left(u-v+t\right)}{\Gamma\left(-2v+t\right)}\frac{[2v-t,v+u]_{k}(2v+2k-t)}{[1,1-u+v-t]_{k}(2v-t)} ,\quad k\in \llbracket 0,k_{v,t}\rrbracket.\label{eq:pd v}
\end{align}
Here, we write $[a]_0 = 1$, 
$[a]_k:=\prod_{j=0}^{k-1}(a+j)$ for
$k\in\mathbb Z_{\geq0}$, 
$[a_1,\ldots,a_r]_k:=\prod_{\ell=1}^r[a_\ell]_k$, 
$\Gamma(a_1,\ldots,a_r):=\prod_{\ell=1}^r\Gamma(a_\ell)$,
 $\Gamma(a\pm b) = \Gamma(a+b)\Gamma(a-b)$, and $\Gamma(a\pm b,c\pm d) = \Gamma(a\pm b)\Gamma(c\pm d)$. 
Reciprocal Gamma factors are interpreted using the entire
function $1/\Gamma$.
In the formula \eqref{eq:CDH marginal} above, when either $j_{u,t}<0$ or $k_{v,t}<0$, the corresponding sum is understood as zero. 

A slightly more compact way of writing  \eqref{eq:pd u} and \eqref{eq:pd v} is 
\begin{align}
\mathfrak p_t^{u,v,d}(u_{t,j})&= -\frac{\Gamma(v-u-t-j)}{\Gamma(-2u-t-j+1)}\frac{[u+v]_j(2u+t+2j)}{j!}, \quad j\in\llbracket 0,j_{u,t}\rrbracket,\label{eq:pd u'}\\
\mathfrak p_t^{u,v,d}(v_{t,k})&= -\frac{\Gamma(u-v+t-k)}{\Gamma(-2v+t-k+1)}\frac{[u+v]_k(2v-t+2k)}{k!}, \quad k\in\llbracket 0,k_{v,t}\rrbracket.\label{eq:pd v'}
\end{align}
\begin{remark}
\label{rem:candidate atoms}
The weights assigned to the candidate atoms in \eqref{eq:pd u'} and \eqref{eq:pd v'} may vanish when $u+v\in\Z_{\le0}$. If $u+v\notin\Z_{\leq0}$, all the listed candidate
atoms have nonzero masses. Indeed, for every index $j\in\llbracket 0,j_{u,t}\rrbracket$,
all factors on the right-hand side of \eqref{eq:pd u'} other than
$[u+v]_j$ are finite and nonzero. Consequently, for $j\geq1$,
$\mathfrak p_t^{u,v,d}(u_{t,j})=0$ if and only if 
\[
u+v=-m
\quad\text{for some }m\in\llbracket 0,j-1\rrbracket.
\]
The analogous statement holds for the candidate atoms $v_{t,k}$.
In particular, if $u+v=-m$ for some $m\in\Z_{\geq0}$, then every
listed candidate atom with index at least $m+1$ has zero mass. Thus,
if $j_{u,t}\geq m+1$, the terminal candidate
$u_{t,j_{u,t}}$ has zero mass, and similarly for
$v_{t,k_{v,t}}$. At the boundary $u+v=0$, only the
candidate with index zero can have nonzero mass; that is, for
$t\in\mathbb T_{u,-u}$,
$\mathfrak p_t^{u,-u}=\delta_{u_{t,0}}$.
\end{remark}

For $(u,v)\in\calD$, $\mathfrak p_0^{u,v}$ (defined in \eqref{eq:CDH marginal}) is well-defined. The fact that $\mathfrak p_0^{u,v,c}$ is well-defined is guaranteed by the assumption $\rmRe(u), \rmRe(v)\notin\Z_{\le0}$. For $\mathfrak p_0^{u,v,d}$, see the next remark. 
\begin{remark}
The Gamma factors in the numerators of \eqref{eq:pd u'} and \eqref{eq:pd v'} may have poles. Here we check that the functions are well-defined. Start with \eqref{eq:pd u'} (we first assume $u,v\in\R$; for $u,v\in\C$, the same argument is applied to the real parts of the Gamma-function arguments).
For every $j\in\llbracket 0,j_{u,t}\rrbracket$, the definition of $j_{u,t}$ gives $u+t/2+j<0$. Hence
\[
v-u-t-j=v-\frac t2-\left(u+\frac t2+j\right) > v-\frac t2.
\]
Thus $\Gamma(v-u-t-j)$ has no pole when $v-t/2\ge0$.
A similar argument applies to show that \eqref{eq:pd v'} is involved only when $v-t/2<0$, and in this case if $u+t/2\ge 0$ then \eqref{eq:pd v'} is well-defined. 

That is, the only case that has not been addressed above is when $u+t/2<0$ and $v-t/2<0$. Since $t\in\mathbb T_{u,v}$ implies $v-t/2>0$, this case can arise here only when $t=0$. In that case, the condition $u-v\notin\Z$ ensures that neither $v-u-j$ nor $u-v-k$ belongs to $\Z_{\leq0}$ for any admissible $j$ or $k$. For complex $u,v$, the same conclusion follows from $\rmRe(u)-\rmRe(v)\notin\Z$. This also explains the corresponding constraint in the definition of $\calD$. Consequently, $\mathfrak p_0^{u,v,d}$ is well-defined for every $(u,v)\in\calD$.
\end{remark}

\subsubsection{Transitional kernels}
Assume always $(u,v)\in\calD$. For each $t\in\mathbb T_{u,v}$, since $\rmRe(v)-t/2>0$, there are no atoms associated to the parameter $v$. 
This restriction of parameters is convenient and sufficient for our application to the open WASEP later. 
For $r\in\mathbb T_{u,v}$, we define the set of candidate atom locations as
\equh\label{eq:A}
\mathbb A_r^u
:=
\{u_{r,j}:j\in\llbracket 0 , j_{u,r}\rrbracket \},
\eque
where the set is understood as empty when $j_{u,r}<0$. For each $s,t\in\mathbb T_{u,v}$ with $s<t$, define
\[
\mathfrak p_{s,t}^{u,v}(x,\d y) := \mathfrak p_{s,t}^{u,v,c,c}(x,y)\d y + \mathfrak p_{s,t}^{u,v,d,c}(x,y)\d y + \mathfrak p_{s,t}^{u,v,d,d}(x,\d y),
\]
as a measure with $\mathfrak p_{s,t}^{u,v,c,c}, \mathfrak p_{s,t}^{u,v,d,c}$ and $\mathfrak p_{s,t}^{u,v,d,d}$ to be defined below.
For $s,t\in\mathbb T_{u,v}$ and $s<t$, the density functions are
\begin{align}
        \mathfrak{p}_{s,t}^{u,v,c,c}(x,y)         & := \ddfrac{{\Gamma\pp{v-\frac{t}{2}\pm\i\frac{\sqrt{y}}{2}}}}{{\Gamma\pp{v-\frac{s}{2}\pm\i\frac{\sqrt{x}}{2}}}}\ddfrac{\abs{\Gamma\pp{ \frac{t-s}{2}+\i\frac{\sqrt{x}}2\pm  \i\frac{\sqrt{y}}2}}^2}{\displaystyle 8\pi \sqrt{y}\Gamma(t-s)\abs{\Gamma(\i\sqrt{y})}^{2}}\inddd{y>0}, x\in(0,\infty),
        \label{eq:transitional cc}
\\
        \mathfrak{p}_{s,t}^{u,v,d,c}(u_{s,j},y) & := \frac{\displaystyle{\Gamma\left(\frac{t-s}{2}\pm\left(u+j+\frac{s}{2}\right)\pm\i\frac{\sqrt{y}}{2},v-\frac{t}2\pm\i\frac{\sqrt{y}}{2}\right)}}{\displaystyle 8\pi\sqrt{y}\Gamma\pp{v-\frac{s}{2}\pm\left(u+j+\frac{s}{2}\right),t-s}\abs{\Gamma(\i\sqrt{y})}^{2}}\inddd{y>0}, j\in\llbracket 0,j_{u,s}\rrbracket,\label{eq:transitional dc}
\end{align}
and $\mathfrak p_{s,t}^{u,v,c,c}(x,\cdot) = 0$ for all $x\notin(0,\infty)$ and $\mathfrak p_{s,t}^{u,v,d,c}(x,\cdot) = 0$ for all $x\notin\mathbb A_s^u$.
 Note that $s,t\in\mathbb T_{u,v}$ implies that $\rmRe(v)-t/2>0$. So $\mathfrak p_{s,t}^{u,v,c,c}$ is well-defined. For \eqref{eq:transitional dc} it suffices to examine the remaining numerator factors, which we can write as $\Gamma(t/2+u+j\pm \i\sqrt{y}/2)\Gamma(t/2-u-j-s\pm \i\sqrt{y}/2)$. For the first pair, the condition $\rmRe(u)+t/2\notin\Z_{\leq0}$ ensures that neither Gamma factor has a pole for any $j\geq0$ and $y>0$, since the real part of each argument is $\rmRe(u)+t/2+j\notin\Z_{\leq0}$. For the second pair, we recall that the expression \eqref{eq:transitional dc} is involved only when $j_{u,s} =\ceil{-\rmRe(u)-s/2-1} \ge 0$, and in this case $\rmRe(t/2-u-j-s)
>(t-s)/2>0$.

Next, for each $x\in\mathbb A_s^{u}$, $\mathfrak p^{u,v,d,d}_{s,t}(x,\cdot)$ is a discrete measure whose possible atomic locations lie in $\mathbb A_t^u$.
We write, with a slight abuse of notation,
 \begin{align*}
\mathfrak p^{u,v,d,d}_{s,t}(x,y) \equiv \mathfrak p^{u,v,d,d}_{s,t}(x,\{y\}), \quad x\in \mathbb A_s^u, y\in\mathbb A_t^u.
 \end{align*}
We have 
\begin{multline*}
\mathfrak p_{s,t}^{u,v,d,d}(u_{s,j},u_{t,j'}) = \frac{\Gamma(v-u-j-t,-2u-2j-s)}{\Gamma(v-u-j-s,-2u-2j-t)}\\
\times \frac{[t+2u+2j,v+u+j,t-s]_{j'-j}(t+2u+2j')}{[1,1+t+j+u-v,1+2u+2j+s]_{j'-j}(t+2u+2j)}(-1)^{j'-j}\inddd{j\le j'}, 
\end{multline*}
for all $j\in \llbracket 0, j_{u,s}\rrbracket$ and $j'\in \llbracket j, j_{u,t}\rrbracket$, and $\mathfrak p_{s,t}^{u,v,d,d}(x,\cdot) = 0$ for all $x\notin \mathbb A_s^u$. 

\begin{remark}
\label{rem:measures}
For general $(u,v)\in\calD$, the marginal measures
$\mathfrak p_t^{u,v}$, the kernel measures
$\mathfrak p_{s,t}^{u,v}(x,\cdot)$, and the resulting auxiliary
measures $\mu_{t_1,\ldots,t_d}^{u,v}$ are complex-valued. The first
two are measures on $\C$, whereas
$\mu_{t_1,\ldots,t_d}^{u,v}$ is a measure on $\C^d$. When
$u,v\in\R$, these measures are real-valued but need not be positive.
We distinguish three cases.

First, suppose that $u+v>0$. In this case, the marginal measures and
kernels are associated with the continuous dual Hahn process. More
precisely,
\begin{enumerate}[(i)]
\item $\mathfrak p_t^{u,v}$ is a positive, locally finite measure of
infinite total mass for every $t\in\mathbb T_{u,v}$;
\item for every $s<t$ in $\mathbb T_{u,v}$ and every
$x\in(0,\infty)\cup \mathbb A_s^u$,
$\mathfrak p_{s,t}^{u,v}(x,\cdot)$ is a probability measure. When
$x>0$, this probability measure is supported on $(0,\infty)$.
\end{enumerate}

Second, suppose that $u+v=0$, so that $v=-u$. In this case, the
auxiliary measures are concentrated on a deterministic trajectory.
More precisely,
\begin{enumerate}[(i)]
\item for $t\in\mathbb T_{u,-u}$,
$\mathfrak p_t^{u,-u}=\delta_{u_{t,0}}$,
\item for every $s<t$ in $\mathbb T_{u,-u}$,
$ \mathfrak p_{s,t}^{u,-u}(u_{s,0},\d y)
    =\delta_{u_{t,0}}(\d y)$.
\end{enumerate}
On the probabilistic side, the Girsanov formula gives
\[
    \pp{\zeta_x^{u,-u}}_{x\in[0,1]}
    \stackrel{\mathrm d}{=}
    \left(ux+\frac{1}{\sqrt2}\B_x\right)_{x\in[0,1]}.
\]
Now, for our formula, $\mathfrak p_0^{u,-u}=\delta_{-4u^2}$, so that
$K_{u,-u}=e^{u^2}$. Since the auxiliary measure is concentrated
on $(u_{-s_1,0},\ldots,u_{-s_d,0})$, we have
\[
\Psi_{u,-u}(\vvx,\vvc)
=\exp\pp{
\sum_{k=1}^d(x_k-x_{k-1})
\left(u-\frac{s_k}{2}\right)^2}.
\]
One readily checks that
\[
\frac{\Psi_{u,-u}(\vvx,\vvc)}{K_{u,-u}}
=
\exp\pp{
-u\sum_{k=1}^d c_kx_k
+\frac14\sum_{j,k=1}^d c_jc_k\min\{x_j,x_k\}} 
=\mathbb E
    \exp\pp{-\sum_{k=1}^d c_k\zeta_{x_k}^{u,-u}}.
\]

Third, suppose that $u+v<0$. In this case, the marginal measures are
genuinely signed, and the family as a whole does not define a Markov
process. Nevertheless, some of the individual kernel measures remain
probability measures. More precisely,
\begin{enumerate}[(i)]
\item $\mathfrak p_t^{u,v}$ is a nonzero finite signed measure of
total mass zero:
\[
    \mathfrak p_t^{u,v}(\R)=0.
\]
If $u+v\notin\Z_{<0}$, its continuous density has the constant sign
of $1/\Gamma(u+v)$, and the sum of its atomic masses has the opposite
sign. The individual atomic masses, however, need not have the same
sign. If $u+v$ is a negative integer, then the continuous part
vanishes and the atomic masses sum to zero;

\item for every $x>0$ and every $s<t$ in $\mathbb T_{u,v}$,
$\mathfrak p_{s,t}^{u,v}(x,\d y)$ is a probability measure supported
on $(0,\infty)$;

\item for every $s<t, s,t\in\mathbb T_{u,v}$ and for every candidate atom $u_{s,j}$, whether $\mathfrak p_{s,t}^{u,v}(u_{s,j},\cdot)$ is non-negative 
depends on $u+v+j$. If $u+v+j>0$, then
$\mathfrak p_{s,t}^{u,v}(u_{s,j},\d y)$ is a probability measure. If
$u+v+j=0$, then
\[
    \mathfrak p_{s,t}^{u,v}(u_{s,j},\d y)
    =\delta_{u_{t,j}}(\d y).
\]
If $u+v+j<0$, then
$\mathfrak p_{s,t}^{u,v}(u_{s,j},\d y)$ is a genuinely signed measure
of total mass one.
\end{enumerate}
\end{remark}

\subsection{Path decomposition of the support of the auxiliary measure}\label{sec:path}
For $a<b$, we write $\C_{(a,b)}:=\{z\in\C:\mathrm{Re}(z)\in(a,b)\}$ and refer to such a region as a strip.
Now that we have introduced the auxiliary measures, we have more explicit expressions of $\Psi_{u,v}(\vvx,\vvc)$ on each strip. It turns out that they can be further simplified. 

Recall that throughout  $c_i>0, i=1,\dots,d$ and $\summ i1{d-1} c_i<1$. Throughout this section, we shall fix $v$ but vary $u$. Thus
we assume $v\in\C$ is such that
\equh\label{eq:v}
-c_d<2\rmRe(v).
\eque
(Recall that $-s_1<\cdots<-s_d = -c_d$.)
Then, for every $u$ with $(u,v)\in\calD$, recall that we assume $-s_j\in\mathbb T_{u,v}$ for all $j\in\llbracket d\rrbracket$, which under the previous assumption is equivalent to
\equh\label{eq:u constraint}
\rmRe(u)-\frac{s_j}2\notin\Z_{\le0}.
\eque
We then set 
\[
s_{j,i} := s_j-2i, j=1,\dots, d,
\] 
and 
\[
s_{d+1,i} := s_{1,i+1}\qmand s_{0,i+1}:=s_{d,i} \quad\mbox{ for }\quad i\in\Z_{\ge 0},
\]
 with the convention $s_{0,0}:=+\infty$. Then, for $u,v$ satisfying the above, $\Psi_{u,v}(\vvx,\vvc)$ is defined for $u\in\C_{\pp{s_{j+1,i}/2,s_{j,i}/2}}$, for any $i\in\Z_{\ge0}$ and $j\in\llbracket 0,d\rrbracket$. On the strip $\C_{(s_{j+1,i}/2,s_{j,i}/2)}$, the candidate atomic indices associated with $u$ are $0,\ldots,i$ at the first $j$ auxiliary times $-s_1,\ldots,-s_j$. At the remaining times, the admissible indices are $0,\ldots,i-1$, with this list empty when $i=0$.

We also set $\Lambda_{\emptyset}^{(d)}:=(0,\infty)^d$, and introduce
\[
\Lambda\topp d_{(r_1,\dots,r_\ell)}\equiv \Lambda\topp d_{(r_1,\dots,r_\ell)}(u,\vv c) :=\{u_{-s_1,r_1}\}\times\cdots\times\{u_{-s_\ell,r_\ell}\}\times(0,\infty)^{d-\ell},
\]
with $r_k\in\llbracket 0, j_{u,-s_k}\rrbracket, k\in \llbracket \ell\rrbracket$. Here $\llbracket\ell\rrbracket:=\llbracket1,\ell\rrbracket$. We also define 
\equh\label{eq:S^(i)}
\mathbb S\topp i_{\ell} :=\ccbb{(r_1,\dots,r_\ell)\in\Z_{\ge 0}^{\ell}: r_1\le\cdots \le r_\ell \le i}.
\eque
For $i\in\Z_{\ge0}$, we also set $\mathbb S_0\topp i:=\{\emptyset\}$. For $\vvr=(r_1,\dots,r_\ell)$, write $\vvr\topp i:=(r_1,\dots,r_\ell,i)$, with the convention $\emptyset\topp i:=(i)$.

Accordingly, we set
\begin{align}
H_\emptyset(u)\equiv H_{\vvx,\vvc,\emptyset}(u,v) & :=\int_{(0,\infty)^d}e^{-\summ k1d(x_k-x_{k-1})y_k/4}\mu_{-s_1,\dots,-s_d}^{u,v}(\d \vvy),\nonumber\\
H_\vvr(u)\equiv H_{\vvx,\vvc,\vvr}(u,v) & :=\int_{\Lambda_\vvr\topp d}e^{-\summ k1d(x_k-x_{k-1})y_k/4}\mu^{u,v}_{-s_1,\dots,-s_d}(\d\vvy).\nonumber
\end{align}
In the second definition, $1\leq \ell\leq d$ and $\vvr=(r_1,\dots,r_\ell)$ satisfying $0\leq r_1\leq\cdots\leq r_\ell$
 and $r_k\leq j_{u,-s_k},\ k\in\llbracket \ell\rrbracket$.  Set $\mathbb S\topp{-1}_\ell := \emptyset$, $\ell\in \llbracket d\rrbracket$.

\begin{proposition}\label{prop:path decomp}
For $(u,v)\in\calD$, under \eqref{eq:c close}, \eqref{eq:v}, and \eqref{eq:u constraint}, the measure $\mu_{-s_1,\dots,-s_d}^{u,v}$ is concentrated, for $u\in\C_{\pp{s_{j+1,i}/2,s_{j,i}/2}}$, on the disjoint union
\[
\Lambda_{\emptyset}^{(d)}\cup\pp{\bigcup_{\ell=1}^j\bigcup_{\vvr\in\mathbb S\topp {i}_\ell}\Lambda_\vvr\topp d}\cup\pp{\bigcup_{\ell={j+1}}^d\bigcup_{\vvr\in\mathbb S\topp {i-1}_\ell}\Lambda_\vvr\topp d},
\]
for all $i\ge 0, j=0,\dots,d$. 
As a consequence, for $u\in\C_{\pp{s_{j+1,i}/2,s_{j,i}/2}}$,
\equh\label{eq:path decomp}
\Psi_{u,v}(\vvx,\vvc) = H_\emptyset(u)+\summ\ell1j\sum_{\vvr\in\mathbb S_\ell\topp i}H_\vvr(u) + \summ\ell{j+1}d\sum_{\vvr\in\mathbb S_\ell\topp{i-1}}H_\vvr(u).
\eque
\end{proposition}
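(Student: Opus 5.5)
The plan is to read the support of $\mu^{u,v}_{-s_1,\dots,-s_d}$ directly off the definitions of $\mathfrak p_t^{u,v}$ and $\mathfrak p_{s,t}^{u,v}$. No analytic continuation is needed here. I would first describe the support as a set of $d$-step trajectories, and then pin down exactly which atom indices occur at each time on the given strip.

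\emph{Step 1 (structure of trajectories).} By \eqref{eq:CDH marginal}, $\mathfrak p^{u,v}_{-s_1}$ lives on $(0,\infty)\cup\mathbb A^u_{-s_1}$. Under \eqref{eq:v} the $v$-atoms are absent, since $\rmRe(v)+s_1/2\ge\rmRe(v)+c_d/2>0$; at $t=-s_1$ we have $k_{v,t}<0$. For the kernels, recall that $\mathfrak p^{u,v,c,c}_{s,t}(x,\cdot)$ vanishes unless $x>0$, and it and $\mathfrak p^{u,v,d,c}_{s,t}$ are both absolutely continuous on $(0,\infty)$. The part $\mathfrak p^{u,v,d,d}_{s,t}(x,\cdot)$ is nonzero only for $x\in\mathbb A^u_s$ and charges only $u_{t,j'}$ with $j'\ge j$, because of the indicator $\inddd{j\le j'}$. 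Hence every trajectory charged by $\mu$ has the following form. It spends an initial block of $\ell\in\llbracket0,d\rrbracket$ steps at atoms $u_{-s_1,r_1},\dots,u_{-s_\ell,r_\ell}$ with $r_1\le\cdots\le r_\ell$ and $r_k\le j_{u,-s_k}$. After that it stays in $(0,\infty)$, since there is no continuous-to-atom transition. So $\mu$ is concentrated on the union of the sets $\Lambda^{(d)}_\vvr$ over such $\vvr$, together with $\Lambda^{(d)}_\emptyset$.

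\emph{Step 2 (atom index ranges on the strip).} Fix $u\in\C_{(s_{j+1,i}/2,\,s_{j,i}/2)}$. Recall $j_{u,-s_k}=\ceil{-\rmRe(u)+s_k/2-1}$.
\begin{itemize}
\item For $k\ge j+1$: since $s_k\le s_{j+1}$ and $\rmRe(u)>s_{j+1}/2-i$, we get $-\rmRe(u)+s_k/2<i$, so $j_{u,-s_k}\le i-1$.
\item For $k\le j$: since $s_k\ge s_j$ and $\rmRe(u)<s_j/2-i$, we get $j_{u,-s_k}\ge i$.
\end{itemize}
The key point, and the only place \eqref{eq:c close} enters, is the reverse bound $j_{u,-s_k}\le i$ for $k\le j$. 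This needs $\rmRe(u)>s_k/2-i-1$. It follows from
\[
\rmRe(u)>\frac{s_{j+1,i}}{2}\ge \frac{s_{d,i}}{2}>\frac{s_{1,i+1}}{2}=\frac{s_1}{2}-i-1\ge \frac{s_k}{2}-i-1,
\]
where the strict middle inequality is $s_1-s_d=\sum_{k<d}c_k<2$, implied by $\sum_{k<d}c_k<1$. The boundary cases $j=0$ (with $s_{0,0}=+\infty$) and $j=d$ (with $s_{d+1,i}=s_{1,i+1}$) follow by the same computation.

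Combining with monotonicity: if $\ell\le j$, then $\vvr$ ranges over $\mathbb S^{(i)}_\ell$. If $\ell\ge j+1$, then $r_\ell\le i-1$, and by monotonicity all $r_k\le i-1$, so $\vvr\in\mathbb S^{(i-1)}_\ell$; this is empty when $i=0$. Conversely, every such $\vvr$ is admissible, so the listed union is exactly the possible support.

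\emph{Step 3 (disjointness and the integral identity).} The sets are disjoint because atom locations never lie in $(0,\infty)$ and distinct indices give distinct atoms. Indeed, $-4z^2\in(0,\infty)$ forces $\rmRe z=0$, which is excluded by \eqref{eq:u constraint}, i.e.\ $\rmRe(u)-s_k/2+r\ne0$. Also $-4z^2=-4z'^2$ with $z\neq z'$ forces $z'=-z$, which is impossible since both $\rmRe(u-s_k/2+r)$ and $\rmRe(u-s_k/2+r')$ are negative for admissible indices. Then \eqref{eq:path decomp} follows by splitting the integral \eqref{eq:Psi} over this finite disjoint union. Each piece is absolutely convergent, since the continuous kernels are integrable against $e^{-(x_k-x_{k-1})y/4}$ by the Gamma-function decay in \eqref{eq:transitional cc}–\eqref{eq:transitional dc}. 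I expect Step 2 to be the main obstacle, specifically verifying that the index ranges align exactly with $\mathbb S^{(i)}_\ell$ versus $\mathbb S^{(i-1)}_\ell$, including the edge cases. The rest is bookkeeping.
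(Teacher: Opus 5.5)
Your proof is correct and follows the paper's argument. The steps match: you read off from the kernel formulas that paths move only atom-to-atom with nondecreasing index, or from an atom or the continuum into the continuum. You compute $j_{u,-s_k}$ on the strip using $s_1-s_d<1$, and you get disjointness from $\rmRe(u-s_k/2+r)<0$ together with injectivity of $r\mapsto u_{-s_k,r}$.

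One small correction concerns your claim that every $\vvr\in\mathbb S^{(i-1)}_\ell$ with $\ell\ge j+1$ is admissible. This needs $j_{u,-s_k}\ge i-1$ for $k\ge j+1$, i.e.\ $\rmRe(u)<s_k/2-i+1$. For $j\ge1$ this follows from $\rmRe(u)<s_j/2-i$ only via $s_j-s_k<2$, so \eqref{eq:c close} is used there as well, not only in the bound $j_{u,-s_k}\le i$ for $k\le j$.
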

If $j=0$, then the first double summation on the right-hand side is understood as zero. If $j=0,i=0$, then both double summations on the right-hand side are understood as zero.

Note that for all $(u,v)\in\calD$ and $c_1,\dots,c_d$ under \eqref{eq:c close}, \eqref{eq:v}, and \eqref{eq:u constraint}, it follows that there exist $i \in\Z_{\ge 0}$ and $j=0,\dots,d$  such that $u\in \C_{(s_{j+1,i}/2,s_{j,i}/2)}$. Therefore, for all $(u,v)\in\calD$, the above proposition provides an expression of $\Psi_{u,v}(\vvx,\vvc)$, although the values $\vvc$ depend on the values of $(u,v)$. 
\begin{proof}The proof follows by examining the formulas in Section \ref{sec:CDH}. For $u\in\C_{\pp{s_{j+1,i}/2,s_{j,i}/2}}$, by the definition of $j_{u,-s_k}$ and the assumption $s_1-s_d=c_1+\cdots+c_{d-1}<1$,
 $\llbracket 0,j_{u,-s_k}\rrbracket = \llbracket 0,i\rrbracket$ for $k\in \llbracket  j\rrbracket $ and $\llbracket 0,j_{u,-s_k}\rrbracket =  \llbracket 0,i-1\rrbracket$ for $k\in\llbracket j+1,d\rrbracket$, where we apply the convention that $\llbracket 0,m\rrbracket$ is empty when $m<0$. The assumption \eqref{eq:v} implies that $\rmRe(v)+s_k/2>0$, for $k\in \llbracket d\rrbracket $, and consequently there are no atoms associated with $v$ at any of the $-s_1,\dots,-s_d$. 

We decompose the multiple integral into disjoint $d$-step paths each represented by $\Lambda_\vvr^{(d)}$ or $\Lambda_\emptyset^{(d)}$. The set $\Lambda_\emptyset^{(d)}$ consists of all the paths that remain in the continuous domain $(0,\infty)$ at each of the $d$ steps. If the $k$-th step of a path is at the $r$-th atom associated to $u$ at time $-s_k$ (the location of this atom is $u_{-s_k,r}$), then at the $(k+1)$-st step the path either goes to the $r'$-th atom associated to $u$ at time $-s_{k+1}$ with $r'\ge r$, or to the continuous domain $(0,\infty)$. 

Since there is no continuous-to-discrete component in the transition kernel, once a path enters the continuous domain $(0,\infty)$, it remains there at every subsequent step. Furthermore, since the atomic indices along such a path are nondecreasing, an atomic prefix of length $\ell$ is indexed by $\vvr\in\mathbb S_\ell^{(i)}$ when $\ell\leq j$, and by $\vvr\in\mathbb S_\ell^{(i-1)}$ when $\ell>j$. Finally, if $u_{-s_k,r}$ is a candidate atom, then $\rmRe(u-s_k/2+r)<0$, and hence $u_{-s_k,r}=-4(u-s_k/2+r)^2\notin(0,\infty)$. The map $r\mapsto u_{-s_k,r}$ is injective on the set of candidate indices. Consequently, sets corresponding to different atomic prefixes or to different prefix lengths are pairwise disjoint.
\end{proof}

Proposition \ref{prop:path decomp} provides a formula of $\Psi_{u,v}$ for each strip. We shall show that these are the formulas of $\wt\Psi_{u,v}$ on the corresponding strips. The argument is essentially based on an analytic continuation and is dealt with in Section \ref{sec:proof Laplace}.

\begin{remark}\label{rem:CK}
Our formula for the auxiliary measure is related to the continuous-dual-Hahn formula of \citet{corwin24stationary}, but differs from it in normalization, time convention, and parameter range. Let $\widetilde{\mathfrak p}_t^{u,v}$ denote their marginal measure. On the common real fan-region domain,
\equh\label{eq:CK p}
\widetilde{\mathfrak p}_t^{u,v}
=
\Gamma(u+v+2)\mathfrak p_{-t}^{u,v}.
\eque
Our normalization is also important for analytic continuation. If the Corwin--Knizel normalization were retained, the corresponding unnormalized quantities would carry the meromorphic factor $\Gamma(u+v+2)$, requiring one either to exclude
$u+v+2\in\mathbb Z_{\leq0}$ or to perform the cancellation only at the level of their ratio.

There is also a difference in time convention. The joint-measure formula in \citet{corwin24stationary} is written using the auxiliary process at the times
\[
0=s_{d+1}<s_d<\cdots<s_1.
\]
We instead use the increasing times
$-s_1<\cdots<-s_d$ (note  $t$ and $-t$ in each expression in \eqref{eq:CK p}). This time reversal is convenient but not essential.

Finally, \citet{corwin24stationary} formulated their duality formula 
with the continuous dual Hahn process starting from time $0$, which is unnecessary and can cause their formula to involve atoms. Equivalently, in our convention, they required their sequence of time points to end at 0, so $-s_1<\cdots<-s_d<-s_{d+1}=0$, while ours may all remain negative. When $v<0$, the auxiliary measure has $v$-associated atoms at time $-s_{d+1}=0$
even if the marginal measure at each of the times $-s_1,\ldots,-s_d$ has none. See \citet[Remark~7.2]{corwin24stationary} for more on their choice of the parameters. 
\end{remark}

\section{Proofs for the finite-dimensional Laplace transforms}\label{sec:proof Laplace}
In this section, we prove Theorem \ref{thm:AC}.

In the fan region $u+v>0$, 
$\wt K_{u,v} = K_{u,v}$ follows from combining \cite[Proposition~5.1]{bryc23markov} with \cite[Eq. (1.14), corrected arXiv version]{bryc22markov}. 
For the numerators, for suitably chosen $\vvc$, $\wt\Psi_{u,v}(\vvx,\vvc) = \Psi_{u,v}(\vvx,\vvc)$ essentially follows by computing directly the Laplace transform. This step involves the Yakubovich heat kernel and the Hartman--Watson density, and for this calculation one needs to further impose \equh\label{eq fan}
-2\rmRe(u)<-s_1<\cdots<-s_d<2\rmRe(v).
\eque
The proofs of the initial fan-region identities are scattered in the literature, and we provide a summary in Section \ref{sec:identity fan}.

Starting from the aforementioned range of parameters \eqref{eq fan} within the fan region, we use Lemma \ref{lem:Alexey} to construct holomorphic extensions of $\Psi_{u,v}$ and also $K_{u,v}$ from one strip to the next. 
Lemma \ref{lem:Alexey} is at the core of our proof, and it is inspired by an argument from \citep{bryc23markov}.
At each crossing, we identify the residue terms produced by Lemma \ref{lem:Alexey}. Since $\widetilde{K}_{u,v}$ and $\widetilde{\Psi}_{u,v}(\vvx,\vvc)$ are entire and coincide with the auxiliary expressions on the initial strip, the identity theorem for holomorphic functions yields the desired identity on each successive strip. Iterating this procedure proves the claimed formula for the denominator for all $(u,v)\in\calD$, and for the numerator for parameters satisfying the assumptions in Theorem \ref{thm:AC}.

We prove the key result Lemma \ref{lem:Alexey} in Section \ref{sec:techlem}, and apply it to the denominator in Proposition \ref{prop:denom} in Section \ref{sec:denom}, and to the numerator in Proposition \ref{prop:num} in Section \ref{sec:num}. Theorem \ref{thm:AC} follows from the two propositions immediately. 

We start by proving that both $\wt K_{u,v}$ and $\wt \Psi_{u,v}(\vvx,\vvc)$ are entire functions of $(u,v)\in\C^2$.

\begin{proposition}\label{prop:analytic RN}
Fix $d\in\mathbb Z_{>0}$, $0=x_0<x_1<\cdots<x_d=1$,
and $\vvc\in(0,\infty)^d$. Then the maps $(u,v)\mapsto\wt K_{u,v}$ and $(u,v)\mapsto\wt\Psi_{u,v}(\vvx,\vvc)$ are entire on $\C^2$.
\end{proposition}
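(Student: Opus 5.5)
The plan is the standard argument for parameter-dependent expectations: write the integrand as an entire function of $(u,v)$ for each fixed Brownian path, and get a locally uniform integrable bound. Set
\[
A:=\int_0^1e^{-\sqrt2\B_t}\d t>0, \qquad L:=\log A,
\]
and
\[
X:=-\frac1{\sqrt2}\summ k1d c_k\B_{x_k}.
\]
Then the integrand of $\wt\Psi_{u,v}(\vvx,\vvc)$ is
\[
F_{u,v}:=\exp\pp{X-\sqrt2v\B_1-(u+v)L}.
\]
For each fixed $\omega$ this is the exponential of an affine function of $(u,v)$, hence entire on $\C^2$. The case of $\wt K_{u,v}$ is the special case $X\equiv0$, so it suffices to treat $\wt\Psi$.

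The only real step is domination. Fix $R>0$ and restrict to $|u|,|v|\le R$. There we have
\[
|F_{u,v}|\le \exp\pp{X+\sqrt2R|\B_1|+2R|L|}.
\]
To bound $|L|$, use $\min_t e^{-\sqrt2\B_t}\le A\le \max_t e^{-\sqrt2\B_t}$. This gives
\[
|L|\le \sqrt2\sup_{t\in[0,1]}|\B_t|=:\sqrt2M.
\]
Also $|X|\le s_1M/\sqrt2$ and $|\B_1|\le M$. Hence
\[
\sup_{|u|,|v|\le R}|F_{u,v}|\le e^{CM}, \qquad C:=s_1/\sqrt2+\sqrt2R+2\sqrt2R.
\]
By the reflection principle, $M$ has Gaussian tails: $\proba(M>x)\le 4\proba(\B_1>x)$. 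So $\esp e^{CM}<\infty$ for every $C$. This is the step that needs the most care, because the negative power $A^{-(u+v)}$ with $\rmRe(u+v)$ arbitrarily negative could in principle blow up. The two-sided bound on $A$ by the running extremes of $\B$ is what controls it.

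With this locally uniform integrable majorant, I conclude in one of two ways. The first is Morera's theorem in each variable separately plus Fubini (continuity follows from dominated convergence), followed by Hartogs/Osgood to get joint holomorphy. The second is to differentiate under the expectation: the complex partial derivatives in $u$ and $v$ are $-L\,F_{u,v}$ and $(-\sqrt2\B_1-L)F_{u,v}$. On the polydisc these are bounded by $C'(1+M)e^{CM}$, which is again integrable, so the derivatives exist and are continuous. Either route shows $\wt\Psi_{u,v}(\vvx,\vvc)$, and likewise $\wt K_{u,v}$, is entire on $\C^2$.
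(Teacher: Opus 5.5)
Your proof is correct and follows essentially the paper's argument. It uses the same two-sided bound of $\int_0^1 e^{-\sqrt2\B_t}\,\d t$ by $e^{\pm\sqrt2\sup_{t}|\B_t|}$ to get a locally uniform majorant $e^{CS}$ with $S=\sup_{t\in[0,1]}|\B_t|$ sub-Gaussian, and then concludes with dominated convergence, Morera--Fubini in each variable, and Hartogs; your alternative ending by differentiating under the expectation is equally valid.
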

\begin{proof}
Let $\B$ be a standard Brownian motion. Consider the random variables
\begin{align*}
X:=\pp{\int_0^1e^{-\sqrt2\B_t}\d t}^{-1},
 & &
Y:=e^{-\sqrt2\B_1}
\pp{\int_0^1e^{-\sqrt2\B_t}\d t}^{-1}.
\end{align*}
Since $X,Y>0$ almost surely, for $u,v\in\C$ we define $X^u:=e^{u\log X}$, and $Y^v:=e^{v\log Y}$,  where $\log X$ and $\log Y$ denote the real logarithms. Then
\[
X^uY^v
=
e^{-\sqrt2v\B_1}
\pp{\int_0^1e^{-\sqrt2\B_t}\d t}^{-(u+v)}.
\]
We will show below that $\esp|X^uY^v|<\infty$ for every
$(u,v)\in\C^2$, so that $\wt K_{u,v}=\esp(X^uY^v)$ is well-defined on $\C^2$. 

Let $S:=\sup_{t\in[0,1]}|\B_t|$. Since
\[
e^{-\sqrt2 S} \leq \int_0^1e^{-\sqrt2\B_t}\d t \leq e^{\sqrt2 S},
\]
we have $|\log X|\leq\sqrt2 S$ and $|\log Y|\leq\sqrt2|\B_1|+|\log X|\leq2\sqrt2 S$.
Consequently, for every compact set $Q\subset\C^2$, there is a constant $C_Q<\infty$ such that $\sup_{(u,v)\in Q}|X^uY^v| \leq e^{C_QS}$. Since $S$ has sub-Gaussian tails, $\esp e^{CS}<\infty$ for every $C>0$. It follows that  $\esp|X^uY^v|<\infty$ for every $(u,v)\in\C^2$, and the preceding estimate gives an integrable bound uniform
on each compact subset of $\C^2$. Hence $\wt K_{u,v}$ is jointly continuous on $\C^2$ by the dominated convergence theorem.
 For $\wt\Psi_{u,v}(\vvx,\vvc)$, the treatment is similar as we can write $\wt\Psi_{u,v}(\vvx,\vvc)=\EE(X^uY^vZ)$ with 
\[
Z:=\exp\pp{-\frac1{\sqrt 2}\summ k1d c_k\B_{x_k}}.
\]

Write $F(u,v) = X^uY^v$ or $X^uY^vZ$. Fix $v\in\C$ first. Then, for every closed triangular contour
$\gamma\subset \C$, Fubini's theorem gives
\[
\int_\gamma \esp F(u,v)\d u = \esp \pp{\int_\gamma F(u,v)\d u} = 0.
\]
Hence, by Morera's theorem, $\esp F(u,v)$ is holomorphic on $\C$ as a function of $u$ for fixed $v\in\C$. The same argument applies to $\esp F(u,v)$ with $u$ fixed as a function of $v\in\C$. Hartogs' theorem then entails that $\esp F(u,v)$ is jointly holomorphic on $\C^2$ and entire.
\end{proof}


\subsection{A technical lemma}\label{sec:techlem}
In this subsection, we state and prove the key lemma of this paper.
This result is inspired by the argument used in the proof of \cite[Theorem~3.4]{bryc23markov}. 
Recall that for $a,b$ satisfying $-\infty\leq a<b\leq \infty$,  we write $\C_{(a,b)}:=\ccbb{z\in\C:\rmRe(z)\in(a,b)}$.
\begin{lemma}\label{lem:Alexey}
Fix $k\in\Z_{\le 0}$ and $\epsilon\in(0,1/2)$. 
Let 
\[
h(u,z) = \frac{\Gamma(u\pm z/2)}{\Gamma(\pm z)}\wt h(u,z)
\] be a meromorphic function on $\C_{(k-\epsilon,k+\epsilon)}\times \C_{(-2\epsilon,2\epsilon)}$ satisfying the following: 
\begin{enumerate}[(i)]
\item $\wt h(u,z)$ is holomorphic on  $\C_{(k-\epsilon,k+\epsilon)}\times \C_{(-2\epsilon,2\epsilon)}$ and $\wt h(u,z) = \wt h(u,-z)$,
\item for every compact set $L$ contained in
$\C_{(k,k+\epsilon)}$ or in $\C_{(k-\epsilon,k)}$, \[\int_{z\in\i\R}
\sup_{u\in L}|h(u,z)|\,|\d z|<\infty\]
and, for every compact set
$L\subset\C_{(k-\epsilon/3,k+\epsilon/3)}$,
\[
\int_{z\in2\epsilon/3+\i\R}
\sup_{u\in L}|h(u,z)|\,|\d z|<\infty,
\]
\item for every compact set
$L\subset\C_{(k-\epsilon/3,k+\epsilon/3)}$,
\begin{align*}
\lim_{K\to\infty} \sup_{u\in L}\int_0^{2\epsilon/3}\left( |h(u,x+\i K)|+|h(u,x-\i K)|\right)\d x =0.
\end{align*}
\end{enumerate} 
Then, 
\[
H(u):=\int_{z\in\i\R}h(u,z)\d z, \quad u\in\C_{(k,k+\epsilon)},
\] 
is a holomorphic function of $u\in\C_{(k,k+\epsilon)}$ that can be extended by analytic continuation to $u\in\C_{(k-\epsilon,k+\epsilon)}$, with 
\[
H(u) = \int_{z\in\i\R}h(u,z)\d z + 8\pi\i\cdot\frac{\Gamma(2u-k) \rmRes_{z=k}{(\Gamma(z))}}{\Gamma(\pm 2(u-k))} \wt h(u,2(u-k)), \quad u\in\C_{(k-\epsilon,k)}.
\]
For $u\in\C_{(k-\epsilon/3,k+\epsilon/3)}$, this continuation is given by
\[
H(u) = \int_{z\in2\epsilon/3+\i\R}h(u,z)\d z - \frac{8\pi\i(u-k)}{(-k)!\,\Gamma(1-2u+k)} \wt h(u,2(u-k)).
\]
\end{lemma}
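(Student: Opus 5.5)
The proof is a two-step contour shift. The only singularities of $h(u,\cdot)$ near the imaginary axis come from $\Gamma(u\pm z/2)$, since $1/\Gamma(\pm z)$ is entire and $\wt h$ is holomorphic. Writing $u\mp z/2=-m$, the poles are at $z=\pm2(u+m)$, $m\in\Z_{\ge0}$. For $u\in\C_{(k-\epsilon,k+\epsilon)}$, only the two poles with $m=-k$, at $z=2(u-k)$ and $z=-2(u-k)$, have real part in $(-2\epsilon,2\epsilon)$. Every other pole has real part at distance at least $2-2\epsilon>1$ from the strip $\C_{(-2\epsilon,2\epsilon)}$, because $\epsilon<1/2$. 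As $\rmRe(u)$ decreases through $k$, these two poles trade sides of $\i\R$, and this crossing is what produces the residue term.

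\emph{Step 1: holomorphy on $\C_{(k,k+\epsilon)}$.} For each $z$, $h(u,z)$ is holomorphic in $u$ there. Assumption (ii) gives an integrable majorant on every compact $L\subset\C_{(k,k+\epsilon)}$. Dominated convergence plus Morera/Fubini, exactly as in Proposition \ref{prop:analytic RN}, then shows that $H$ is holomorphic. The same argument applies to $u\mapsto\int_{\i\R}h(u,z)\d z$ on $\C_{(k-\epsilon,k)}$, and, by the second part of (ii), to $u\mapsto\int_{2\epsilon/3+\i\R}h(u,z)\d z$ on $\C_{(k-\epsilon/3,k+\epsilon/3)}$.

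\emph{Step 2: shift to $2\epsilon/3+\i\R$ for $u\in\C_{(k,k+\epsilon/3)}$.} Integrate over the rectangle with vertical sides $\i[-K,K]$ and $2\epsilon/3+\i[-K,K]$, and let $K\to\infty$; assumption (iii) kills the horizontal sides. The only enclosed pole is $z=2(u-k)$, whose real part lies in $(0,2\epsilon/3)$. This gives
\[
\int_{\i\R}h\,\d z=\int_{2\epsilon/3+\i\R}h\,\d z-2\pi\i\,\rmRes_{z=2(u-k)}h(u,z).
\]
Substituting $u-z/2=k+w$ gives $\rmRes_{z=2(u-k)}\Gamma(u-z/2)=-2\rmRes_{z=k}\Gamma(z)=-2(-1)^{k}/(-k)!$. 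Hence the residue equals $-2\rmRes_{z=k}\Gamma(z)\,\Gamma(2u-k)\wt h(u,2(u-k))/\Gamma(\pm2(u-k))$. Simplifying $\Gamma(2u-k)/\Gamma(\pm 2(u-k))$ with the reflection formula puts the correction term in the stated form $-8\pi\i(u-k)\wt h(u,2(u-k))/((-k)!\,\Gamma(1-2u+k))$; this is the routine calculation I would check carefully. The resulting right-hand side is holomorphic on all of $\C_{(k-\epsilon/3,k+\epsilon/3)}$: the integral by Step 1, and the correction term because $1/\Gamma$ is entire and $\wt h$ is holomorphic. It agrees with $H$ on $\C_{(k,k+\epsilon/3)}$, so it is the analytic continuation, which proves the second displayed formula.

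\emph{Step 3: shift back for $u\in\C_{(k-\epsilon/3,k)}$.} Now $z=2(u-k)$ lies left of $\i\R$ and is not enclosed. Instead $z=-2(u-k)$, a pole of $\Gamma(u+z/2)$, has real part in $(0,2\epsilon/3)$, and the same rectangle argument gives
\[
\int_{2\epsilon/3+\i\R}h\,\d z=\int_{\i\R}h\,\d z+2\pi\i\,\rmRes_{z=-2(u-k)}h.
\]
Here $\rmRes_{z=-2(u-k)}\Gamma(u+z/2)=2\rmRes_{z=k}\Gamma(z)$. The evenness of $\wt h$ and of $\Gamma(\pm z)$ gives $\wt h(u,-2(u-k))=\wt h(u,2(u-k))$, and the remaining factor $\Gamma(u-z/2)$ evaluated at $z=-2(u-k)$ is $\Gamma(2u-k)$. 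Adding this to the correction term from Step 2, which before simplification is $+4\pi\i\rmRes_{z=k}\Gamma(z)\,\Gamma(2u-k)\wt h/\Gamma(\pm2(u-k))$, yields exactly the $8\pi\i$ term of the first formula. That formula is holomorphic on the whole of $\C_{(k-\epsilon,k)}$ by Step 1, so the identity theorem extends the continuation there.

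\emph{Main obstacle.} The delicate point is the bookkeeping near $u=k$, where the two poles $\pm2(u-k)$ merge at $z=0$, on the contour $\i\R$ itself. This is why the continuation across $\rmRe(u)=k$ must go through the line $2\epsilon/3+\i\R$ rather than $\i\R$. Beyond that, the work is in checking that no other poles enter the rectangles (the distance-$\geq 2$ argument above) and in getting the residue signs and the reflection-formula simplification right.
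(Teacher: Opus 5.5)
Your proposal is correct and follows the paper's proof essentially step for step. You get holomorphy from (ii) via Morera. For $\rmRe(u)$ just right of $k$, a rectangle shift to $2\epsilon/3+\i\R$ picks up the pole at $2(u-k)$. For $\rmRe(u)$ just left of $k$, the shift back picks up the pole at $-2(u-k)$. The evenness of $\wt h$ (and of $\Gamma(\pm z)$) makes these two residue contributions equal, so together they give the $8\pi\i$ term.

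One trivial slip: the remaining poles have real parts of modulus greater than $2-2\epsilon$, so that is their distance from $\i\R$, not from the strip. This still keeps them well outside the rectangles.
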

When the conclusion of the lemma holds, we write, for simplicity,
\begin{multline}\label{eq:Alexey}
\pp{\int_{z\in\i\R} h(u,z)\d z}_{u\in\C_{(k,k+\epsilon)}}
\\\cequiv
\pp{\int_{z\in\i\R}h(u,z)\d z + 8\pi \i\cdot \frac{\Gamma(2u-k) \rmRes_{z=k}{(\Gamma(z))}}{\Gamma(\pm2(u-k))} \wt h(u,2(u-k))}_{u\in\C_{(k-\epsilon,k)}}.
\end{multline}
Here $\cequiv$ means that the function on the right is
obtained from the function on the left by analytic continuation,
with both functions holomorphic on their indicated domains.
\begin{proof}

Condition~$(ii)$ implies that the integral expression for $H(u)$
converges locally uniformly for
$u\in\C_{(k,k+\epsilon)}$. Therefore, $H$ is continuous on $\C_{(k,k+\epsilon)}$. For every closed triangular contour
$\gamma\subset\C_{(k,k+\epsilon)}$, Fubini's theorem gives
\[
\int_\gamma H(u)\,\d u
=
\int_{z\in\i\R}
\left(\int_\gamma h(u,z)\,\d u\right)\d z
=0.
\]
The inner integral vanishes by Cauchy's theorem. Hence, by Morera's
theorem, $H$ is holomorphic on $\C_{(k,k+\epsilon)}$.

Now we explain the analytic continuation argument, which consists of three steps. 
\medskip

\noindent (i) First,  consider and fix $u\in\C_{(k,k+\epsilon/3)}$, and change the contour in the integral expression. Let $\gamma_K$ be
the positively oriented boundary of the rectangle with vertices
\[
-\i K,\quad
2\epsilon/3-\i K,\quad
2\epsilon/3+\i K,\quad
\i K,
\]
where $K>3|\operatorname{Im}(u)|$. The only possible pole of
$h(u,\cdot)$ inside $\gamma_K$ is at $z=2(u-k)$. By the residue
theorem,
\[
\int_{\gamma_K}h(u,z)\,\d z
=
2\pi\i\operatorname{Res}_{z=2(u-k)}h(u,z).
\]
Conditions~$(ii)$ and~$(iii)$ allow us to let $K\to\infty$, giving
\[
\int_{2\epsilon/3+\i\R}h(u,z)\,\d z
-
\int_{\i\R}h(u,z)\,\d z
=
2\pi\i\operatorname{Res}_{z=2(u-k)}h(u,z).
\]
Throughout, all vertical contours are oriented in the positive imaginary direction.
Near $z=2(u-k)$, only the factor $\Gamma(u-z/2)$ has a pole:
\[
\Gamma(u-z/2)
=
-\frac{2\operatorname{Res}_{w=k}\Gamma(w)}
       {z-2(u-k)}+O(1).
\]
Evaluating the other factors at \(z=2(u-k)\), we obtain
\[
\operatorname{Res}_{z=2(u-k)}h(u,z)
=
-2\frac{\Gamma(2u-k)\operatorname{Res}_{w=k}\Gamma(w)}
        {\Gamma(\pm2(u-k))}
\wt h(u,2(u-k))
=:-2R_k(u).
\]
Since
$\operatorname{Res}_{w=k}\Gamma(w)={(-1)^{-k}}/{(-k)!}$, 
the Gamma recurrence gives
\begin{equation}\label{eq:1/2}
R_k(u)
=
-\frac{2(u-k)}
{(-k)!\Gamma(1-2u+k)}
\wt h(u,2(u-k)).
\end{equation}
The reciprocal Gamma function is entire, and
$u\mapsto\wt h(u,2(u-k))$ is holomorphic on the strip
$\C_{(k-\epsilon,k+\epsilon)}$. Thus \eqref{eq:1/2}
extends $R_k$ holomorphically to $\C_{(k-\epsilon,k+\epsilon)}$.
Consequently,
\begin{equation}\label{eq:h(u) 0}
H(u)
=
\int_{2\epsilon/3+\i\R}h(u,z)\,\d z
+4\pi\i R_k(u),
\quad
u\in\C_{(k,k+\epsilon/3)}.
\end{equation}

\noindent (ii) Second, we fix the contour at $2\epsilon/3+\i\R$, but now change the value of $u$. We show that the expression on the right-hand side of
\eqref{eq:h(u) 0} extends holomorphically across
$\operatorname{Re}(u)=k$. Set
\begin{equation}\label{eq:h(u) 1}
F(u):=
\int_{2\epsilon/3+\i\R}h(u,z)\,\d z
+4\pi\i R_k(u).
\end{equation}
For $u\in\C_{(k-\epsilon/3,k+\epsilon/3)}$ and
$z\in2\epsilon/3+\i\R$, neither $\Gamma(u+z/2)$ nor
$\Gamma(u-z/2)$ has a pole. Condition~$(ii)$ and the same Morera
argument as above therefore show that the shifted-contour integral
is holomorphic on $\C_{(k-\epsilon/3,k+\epsilon/3)}$. Together with \eqref{eq:1/2}, this
shows that $F$ is holomorphic on
$\C_{(k-\epsilon/3,k+\epsilon/3)}$. By \eqref{eq:h(u) 0}, it agrees
with $H$ on $\C_{(k,k+\epsilon/3)}$.
\medskip

\noindent (iii) Finally, consider and fix $u\in\C_{(k-\epsilon/3,k)}$, and we now move the contour in the integral formula back from $2\epsilon/3+\i\R$ to $\i\R$. The same
rectangular-contour argument now shows that the only possible pole
between $\i\R$ and $2\epsilon/3+\i\R$ is at $z=2(k-u)$. 
This time near $z=2(k-u)$,
\[
\Gamma(u+z/2)
=
\frac{2\operatorname{Res}_{w=k}\Gamma(w)}
       {z+2(u-k)}+O(1).
\]
Hence, 
\[
\operatorname{Res}_{z=2(k-u)}h(u,z)
=
2
\frac{
\Gamma(2u-k)\operatorname{Res}_{w=k}\Gamma(w)}{\Gamma(2(k-u))\Gamma(-2(k-u))}
\wt h(u,2(k-u))
=2R_k(u),
\]
where in the last step we used $\wt h(u,z)=\wt h(u,-z)$.
It follows that
\[
\int_{2\epsilon/3+\i\R}h(u,z)\,\d z
=
\int_{\i\R}h(u,z)\,\d z
+4\pi\i R_k(u).
\]
Substituting this identity into \eqref{eq:h(u) 1}, we obtain
\[
F(u) = \int_{\i\R}h(u,z)\,\d z +8\pi\i R_k(u), \quad
u\in\C_{(k-\epsilon/3,k)}.
\]

By condition~$(ii)$, the same Morera argument as above, and
\eqref{eq:1/2}, the expression on the right-hand side of
\eqref{eq:Alexey} is holomorphic on $\C_{(k-\epsilon,k)}$.
Thus $H$ on $\C_{(k,k+\epsilon)}$, $F$ on
$\C_{(k-\epsilon/3,k+\epsilon/3)}$, and the right-hand side of
\eqref{eq:Alexey} on $\C_{(k-\epsilon,k)}$ agree on their respective
overlaps. They therefore define the claimed analytic continuation.
\end{proof}

\subsection{The denominator}\label{sec:denom}
Using Lemma \ref{lem:Alexey}, we examine 
\[
K_{u,v} = \int_\C e^{-y/4} \mathfrak p_0^{u,v}(\d y), (u,v)\in\calD.
\]
Recall that $\mathfrak p_0^{u,v}$ is a complex measure. Note that although this expression involves only a single integral, for certain values of $u,v$, $\mathfrak p_0^{u,v}$ has atoms associated to both $u$ and $v$. This feature makes the analysis here more delicate than the one for the numerator. 
\begin{proposition}\label{prop:denom}
For every $(u,v)\in\calD$, we have $K_{u,v}=\wt K_{u,v}$. In particular, $(u,v)\mapsto K_{u,v}$ is holomorphic on $\calD$.
\end{proposition}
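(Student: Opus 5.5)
We will prove $K_{u,v}=\wt K_{u,v}$ by starting from the fan region and continuing analytically in one variable at a time, using Lemma \ref{lem:Alexey} at each pole crossing.

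\emph{Base case.} For real $u,v$ with $u+v>0$ and, say, $u,v>0$, the identity is known from \cite[Proposition~5.1]{bryc23markov} together with \cite[Eq.~(1.14)]{bryc22markov}. In this range $\mathfrak p_0^{u,v}$ has no atoms, so $K_{u,v}$ is a single integral of $e^{-y/4}\mathfrak p_0^{u,v,c}(y)$. Substituting $y=-z^2$ with $z\in\i\R$ rewrites it, up to the constant $1/(8\pi\Gamma(u+v))$ and the symmetrization $z\leftrightarrow -z$, as
\[
\int_{\i\R}\frac{\Gamma(u\pm z/2)\Gamma(v\pm z/2)}{\Gamma(\pm z)}e^{z^2/4}\,\d z .
\]
This has exactly the form required in Lemma \ref{lem:Alexey}, with $\wt h(u,z)=\Gamma(v\pm z/2)e^{z^2/4}/\Gamma(u+v)$, which is even in $z$. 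For complex $(u,v)$ with $\rmRe(u),\rmRe(v)>0$, both sides are holomorphic: $\wt K$ by Proposition \ref{prop:analytic RN}, and $K$ by the Morera argument from the proof of the lemma, which uses Stirling decay of the integrand along vertical lines. Since they agree on a real open set, they agree on all of $\{\rmRe(u),\rmRe(v)>0\}$.

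\emph{Crossing in $u$.} Fix $v$ with $\rmRe(v)>0$ and decrease $\rmRe(u)$ through $0,-1,-2,\dots$. The poles of $\Gamma(u\pm z/2)$ cross $\i\R$ exactly when $\rmRe(u)$ passes through a nonpositive integer $k$. At each such crossing, Lemma \ref{lem:Alexey} gives the continuation as the same contour integral plus a residue term proportional to $\wt h(u,2(u-k))$. I will verify that this residue term equals $e^{u_{0,j}\cdot(-1/4)}\mathfrak p_0^{u,v,d}(u_{0,j})$ with $j=-k$, using \eqref{eq:pd u'}, the value $e^{(u-k)^2}=e^{-u_{0,j}/4}$, and Gamma reflection. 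Because each earlier residue term is an entire function of $u$ multiplied by $1/\Gamma(1-2u+k)$, it is already holomorphic, so the continuation simply accumulates atoms. I also need the Stirling estimates in conditions (ii) and (iii) of the lemma. These follow from the Gaussian factor $e^{z^2/4}$, which is bounded on $\i\R$, together with the polynomial growth rate of $\Gamma(u\pm z/2)\Gamma(v\pm z/2)/\Gamma(\pm z)$. The identity theorem, applied strip by strip, then gives $K=\wt K$ for every $u$ in $\calD$ with this $v$. By the symmetry $u\leftrightarrow v$ of the integrand, the same argument handles $\rmRe(u)>0$ with $\rmRe(v)$ arbitrary.

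\emph{Both real parts negative.} This is the main obstacle. Here I fix $u$ with $\rmRe(u)<0$, already continued and written as the contour integral plus $u$-atoms, and then decrease $\rmRe(v)$. The continuous part produces new $v$-atoms by the same lemma, now applied with the roles of $u$ and $v$ swapped. The $u$-atom weights $\mathfrak p_0^{u,v,d}(u_{0,j})$, however, contain $\Gamma(v-u-j)$, which has poles when $v-u\in\Z$. I need to check that these atom terms continue meromorphically in $v$, that any apparent pole is cancelled by a matching $v$-atom term (the mechanism illustrated in Remark \ref{rem:D approx}), and that the continuation is holomorphic precisely on $\calD$, where $\rmRe(u)-\rmRe(v)\notin\Z$ keeps the two families of terms apart. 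Since everything is holomorphic on the connected open set $\calD\cap\{\rmRe(u)<0,\rmRe(v)<0\}$ away from these real-part constraints, and agrees with $\wt K$ near the boundary $\rmRe(v)\to0^+$, the identity theorem extends $K=\wt K$ to all of $\calD$. Holomorphy of $K$ on $\calD$ then follows from Proposition \ref{prop:analytic RN}.
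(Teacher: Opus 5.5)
Your base case is fine. The crossings in $u$ with $\rmRe(v)>0$ fixed are also fine, with one correction. The residue term is not an entire function of $u$ times $1/\Gamma(1-2u+k)$, because it contains $\Gamma(v-u+k)$, which has poles at $u\in v+k+\Z_{\ge0}$. It is holomorphic on $\rmRe(u)<k+\rmRe(v)$, and that suffices since $\rmRe(v)>0$.

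\textbf{The gap is the case $\rmRe(u),\rmRe(v)<0$.} You leave it as a list of things to check and then close it with an identity-theorem argument that does not work.

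\begin{itemize}
\item The set $\calD\cap\{\rmRe(u)<0,\rmRe(v)<0\}$ is not connected. The real hypersurfaces $\rmRe(u)\in\Z$, $\rmRe(v)\in\Z$ and $\rmRe(u)-\rmRe(v)\in\Z$ cut it into many components.
\item On each component $K_{u,v}$ is a different expression, with a different set of atoms. So agreement with $\wt K$ on one component says nothing about the next.
\item Likewise, the region $\rmRe(v)>0$ is separated from these components by the crossing $\rmRe(v)=0$, which creates the atom $v_{0,0}$.
\item ``Holomorphic precisely on $\calD$'' is not right. The continuation is $\wt K$, which is entire. The individual terms are singular only on the complex lines $v-u\in\Z$, not on the real hypersurfaces $\rmRe(u)-\rmRe(v)\in\Z$.
\end{itemize}

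\textbf{What is missing.} Fix $u$ with $\rmRe(u)\notin\Z$. The troublesome factors are $\Gamma(v-u-j)$ in the $u$-atom weights and $\Gamma(u-v+k)$ in the new $v$-residue terms. As functions of $v$ they are meromorphic, with poles only at the isolated points $v\in u+\Z$. With this observation you can rerun your $u$-argument in the variable $v$:
\begin{itemize}
\item Lemma~\ref{lem:Alexey} applies to the contour integral at each crossing $\rmRe(v)=k$. Hypothesis (i) holds because $\Gamma(u\pm z/2)$ has no poles for $|\rmRe(z)|$ small when $\rmRe(u)\notin\Z$.
\item The remaining terms are carried along as meromorphic functions.
\item The identity theorem, applied on each strip minus a discrete set, identifies the sum with the entire function $v\mapsto\wt K_{u,v}$.
\end{itemize}
No pole cancellation needs to be checked by hand: it is forced, since the sum equals a bounded function near each such point. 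Points of $\calD$ avoid these poles.

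\textbf{How the paper avoids this.} It continues along the diagonal $\lambda\mapsto(u+\lambda,v+\lambda)$, so $v-u$ stays fixed. Then $\Gamma(v-u+k)$ is a constant, and each residue term $R_k^*$ or $R_k^{**}$ is entire in $\lambda$ and passes unchanged through later crossings. The condition $\rmRe(u)-\rmRe(v)\notin\Z$ in $\calD$ is exactly what makes the $u$-crossings and $v$-crossings along this path occur at distinct values of $\lambda$.
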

\begin{proof}
We first establish the identity for $u,v\in\C_{(0,\infty)}$. For a general target $(u,v)\in\calD$, we then continue along $\lambda\mapsto(u+\lambda,v+\lambda)$, identifying each residue term with the corresponding atomic contribution as $\rmRe(u+\lambda)$ or $\rmRe(v+\lambda)$ crosses a nonpositive integer.

For $u,v\in\C_{(0,\infty)}$, we note that $\mathfrak p_0^{u,v}(\d y) = \mathfrak p_0^{u,v,c}(y)\d y$, and hence 
\begin{align}
K_{u,v} &  = \frac1{8\pi}\int_0^\infty  e^{-y/4}\frac{\Gamma(u\pm\i\sqrt y/2,v\pm\i \sqrt y/2)}{\Gamma(u+v)\sqrt y|\Gamma(\i \sqrt y)|^2}\d y  = \frac1{8\pi \i} \int_{z\in\i \R}e^{z^2/4} \frac{\Gamma(u\pm z/2,v\pm z/2)}{\Gamma(u+v,\pm z)}\d z\nonumber\\
 & = \frac1{8\pi\i}\int_{z\in\i\R}h(u,v,z)\d z,\label{eq:f-1,-1}
\end{align}
where we define 
\[
h(u,v,z) := \frac{\Gamma(u\pm z/2)}{\Gamma(\pm z)}\times \frac{e^{z^2/4}\Gamma(v\pm z/2)}{\Gamma(u+v)}.
\]
By Stirling's formula, the integral in \eqref{eq:f-1,-1} converges locally uniformly for $u,v\in\C_{(0,\infty)}$, so $K_{u,v}$ is holomorphic there. The function $\wt K_{u,v}$ is entire by Proposition \ref{prop:analytic RN}. The fan-region identity reviewed in Section \ref{sec:identity fan} gives $K_{u,v}=\wt K_{u,v}$ for real $u,v>0$. Applying the identity theorem first in $u$ and then in $v$ extends $K_{u,v} = \wt K_{u,v}$ to $u,v\in\C_{(0,\infty)}$. 

For the rest of the proof, we fix $(u,v)\in\calD$ and choose $\lambda_0>\max\{0,-\rmRe(u),-\rmRe(v)\}$. We only consider the case $-\rmRe(u)>0$ or $-\rmRe(v)>0$ (otherwise there is nothing to prove). We analytically continue the contour integral representation of $K_{u+\lambda,v+\lambda}$  in
\eqref{eq:f-1,-1} from $\lambda = \lambda_0$ to $\lambda=0$, and show that the ultimate formula with $\lambda = 0$ is exactly $K_{u,v}$. The equality between $K$ and $\wt K$ established above on $\C_{(0,\infty)}\times\C_{(0,\infty)}$ gives $K_{u+\lambda,v+\lambda}=\wt K_{u+\lambda,v+\lambda}$ for $\lambda$ in a sufficiently small neighborhood of $\lambda_0$. 

As $\lambda$ decreases from $\lambda_0$ to zero, $\rmRe(u+\lambda)$ or $\rmRe(v+\lambda)$ crosses a nonpositive integer at values $\lambda=-\rmRe(u)+k$ or $\lambda=-\rmRe(v)+k$ for $k\in\Z_{\le0}$, lying in $(0,\lambda_0)$. There are finitely many such crossings, and the definition of $\calD$ (in particular, $\rmRe(u)-\rmRe(v)\notin\Z$ when $\rmRe(u),\rmRe(v)<0$) implies that they must be distinct. The definition of $\calD$ also implies there is no crossing at $\lambda=0$ (since $\rmRe(u),\rmRe(v)\notin\Z_{\le 0}$).

We first examine the case that a crossing occurs at $\lambda=-\rmRe(u)+k>0$. We introduce the notation $w=u+\lambda$. In this notation, the second boundary parameter is $v+\lambda=w+v-u$. Choose $0<\epsilon< \min\left\{ 1/2, (1/2)\operatorname{dist} (k+\rmRe(v-u),\Z_{\le0}) \right\}$.
The distance is strictly positive as explained earlier. We apply Lemma \ref{lem:Alexey} to 
\[
h^*(w,z):= h(w,w+v-u,z) = \frac{\Gamma(w\pm z/2)}{\Gamma(\pm z)} \wt h^*(w,z)
\]
with
\[
\wt h^*(w,z) := \frac{e^{z^2/4}\Gamma(w+v-u\pm z/2)}{\Gamma(2w+v-u)}.
\]
Our choice of $\epsilon$ ensures that $\wt h^*(w,z)$ is jointly holomorphic for $w\in\C_{(k-\epsilon,k+\epsilon)}$ and $z\in\C_{(-2\epsilon,2\epsilon)}$. Moreover, $\wt h^*(w,z)=\wt h^*(w,-z)$, so condition~(i) holds. Stirling's formula and the Gaussian factor give the locally uniform integrability and horizontal-contour decay required in conditions~(ii) and~(iii).

Now, by Lemma \ref{lem:Alexey}, we have
\[
\pp{\int_{\i\R} h^*(w,z)\d z}_{w\in\C_{(k,k+\epsilon)}}\cequiv
\pp{\int_{\i\R} h^*(w,z)\d z+ 8\pi\i R_{k}^*(w)}_{w\in\C_{(k-\epsilon,k)}}
\]
with 
\begin{align*}
R_{k}^*(w)  
&= 
-\frac{(2w-2k)e^{(w-k)^2}\Gamma(v-u+k)[2w+v-u]_{-k}}{(-k)!\Gamma(1-2w+k) }\\
& = e^{(w-k)^2} \mathfrak p_0^{w,w+v-u,d} \pp{-4(w-k)^2},
\end{align*}
for all $(w,w+v-u) = (u+\lambda,v+\lambda)\in\calD$ and $\rmRe(w)<k$, 
where we used the residue simplification \eqref{eq:1/2}, the identity $\Gamma(a-k)/\Gamma(a)=[a]_{-k}$, and the formula~\eqref{eq:pd u'} of $\mathfrak p_0^{u,v,d}$.

The factor $\Gamma(v-u+k)$ is independent of $\lambda$ ($w=u+\lambda$ and $u$ is fixed); it is finite because $\rmRe(v-u+k)\notin\Z_{\le0}$. All the remaining terms are entire in $\lambda$, and hence $R_k^*(w)$ is entire in $\lambda$. So once a residue term $R_k^*(w)$ is created when passing $\rmRe(w) =k$, the term remains in the subsequent crossings and at the end, when $\lambda = 0$, this term becomes (recall that $u_{0,j} = -4(u+j)^2, j=0,\dots,j_{u,0}$), 
\equh\label{eq:R u}
R_k^*(w)\bigg\vert_{w = u} = e^{(u-k)^2} \mathfrak p_0^{u,v,d} \pp{-4(u-k)^2} = \int_{\{u_{0,-k}\}}e^{-y/4}\mathfrak p_0^{u,v}(\d y).
\eque

The case that a crossing occurs at $\lambda=-\rmRe(v)+k>0$ can be treated similarly: setting $w' = v+\lambda$ one can repeat the same calculation above. This time, the residue term when crossing $\rmRe(v)+\lambda = k\in\Z_{\le 0}$ is
\[
R^{**}_k(w') = -\frac{(2w'-2k)e^{(w'-k)^2}\Gamma(u-v+k)[2w'+u-v]_{-k}}{(-k)!\Gamma(1-2w'+k)},
\]
The same argument as before shows that $R^{**}_k(w')$ is an entire function over the strip that contains the subsequent crossings. Therefore, at the end it contributes to a term
\equh\label{eq:R v}
R^{**}_k(w')\bigg\vert_{w' = v} = \int_{\{v_{0,-k}\}}e^{-y/4}\mathfrak p_0^{u,v}(\d y).
\eque

We have examined all the crossings of $(u+\lambda,v+\lambda)$ from $\lambda = \lambda_0$ decreasing to $0$. Summing all the terms \eqref{eq:R u} and \eqref{eq:R v} we have shown that 
the analytic extension of $\int_0^\infty e^{-y/4}\mathfrak p_0^{u,v}(\d y)$ from $(u+\lambda_0,v+\lambda_0)$ to $(u,v)$ along the path $(u+\lambda,v+\lambda)$ has the expression
\[
\frac1{8\pi\i}\int_{\i\R}h(u,v,z)\d z + \sum_{k\in\Z_{\le 0},\rmRe(u)<k}R_k^*(u) + \sum_{k\in\Z_{\le 0},\rmRe(v)<k}R_k^{**}(v) = \int_\C e^{-y/4}\mathfrak p_0^{u,v}(\d y),
\]
which is exactly $K_{u,v}$ defined earlier. 
Since the function $\lambda\mapsto\wt K_{u+\lambda,v+\lambda}$ is entire by Proposition \ref{prop:analytic RN} and coincides with $K_{u+\lambda,v+\lambda}$ in a neighborhood of $\lambda_0$, uniqueness of analytic continuation therefore gives $K_{u,v}=\wt K_{u,v}$.  \end{proof}

\subsection{The numerator}\label{sec:num}
For convenience, we recall some notation introduced in Section \ref{sec:path}.
We write $\Lambda_{\emptyset}^{(d)}:=(0,\infty)^d$, and 
\[
\Lambda\topp d_{(r_1,\dots,r_\ell)}\equiv \Lambda\topp d_{(r_1,\dots,r_\ell)}(u,\vv c) :=\{u_{-s_1,r_1}\}\times\cdots\times\{u_{-s_\ell,r_\ell}\}\times(0,\infty)^{d-\ell},
\]
with $r_k\in\llbracket 0, j_{u,-s_k}\rrbracket, k\in \llbracket \ell\rrbracket$. We also write
\[
\mathbb S\topp i_{\ell} :=\ccbb{(r_1,\dots,r_\ell)\in\Z_{\ge 0}^{\ell}: r_1\le\cdots \le r_\ell \le i}.
\]
We set $\mathbb S_0\topp i:=\{\emptyset\}$ for $i\in\Z_{\ge0}$. 
Accordingly, we set
\begin{align*}
H_\emptyset(u)\equiv H_{\vvx,\vvc,\emptyset}(u,v) & :=\int_{(0,\infty)^d}e^{-\summ k1d(x_k-x_{k-1})y_k/4}\mu_{-s_1,\dots,-s_d}^{u,v}(\d \vvy),\nonumber\\
H_\vvr(u)\equiv H_{\vvx,\vvc,\vvr}(u,v) & :=\int_{\Lambda_\vvr\topp d}e^{-\summ k1d(x_k-x_{k-1})y_k/4}\mu^{u,v}_{-s_1,\dots,-s_d}(\d\vvy).
\end{align*}
The second definition applies when  $1\leq \ell\leq d$ and $\vvr=(r_1,\dots,r_\ell)$ satisfying $0\leq r_1\leq\cdots\leq r_\ell$
 and $r_k\leq j_{u,-s_k}$ for all $k\in\llbracket \ell\rrbracket$.  Set $\mathbb S\topp{-1}_\ell := \emptyset$, $\ell\in \llbracket d\rrbracket$. 
Recall also $s_{j,i} := s_j-2i, j=1,\dots, d$,
and 
$s_{d+1,i} := s_{1,i+1}$ and $s_{0,i+1}:=s_{d,i}$ for $i\in\Z_{\ge 0}$  with the convention $s_{0,0}:=+\infty$.
Recall that we are under the assumptions \eqref{eq:c close}, \eqref{eq:v}, and \eqref{eq:u constraint}: that is, $c_1,\dots,c_d>0, \summ j1{d-1}c_j<1, -s_d<2\rmRe(v), \rmRe(u)-s_j/2\notin\Z_{\le0}$. 
We establish the following result on the numerator.
 \begin{proposition}
\label{prop:num}
For $(u,v)\in\calD$, 
\[
\pp{H_\emptyset(u)}_{u\in\C_{(s_1/2,\infty)}}\cequiv\pp{H_\emptyset(u)+\summ\ell1j\sum_{\vvr\in\mathbb S_\ell\topp i}H_\vvr(u) + \summ\ell{j+1}d\sum_{\vvr\in\mathbb S_\ell\topp{i-1}}H_\vvr(u)}_{u\in\C_{\pp{s_{j+1,i}/2,s_{j,i}/2}}},
\]
for all $i\in\Z_{\ge0}, j\in \llbracket 0,d\rrbracket $.  
As a consequence, $\Psi_{u,v}(\vvx,\vvc) = \wt\Psi_{u,v}(\vvx,\vvc)$ for $u\in \C_{(s_{j+1,i}/2,s_{j,i}/2)}$. 
\end{proposition}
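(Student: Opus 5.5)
We argue by induction on the strips, ordered by decreasing real part of $u$:
\[
\C_{(s_1/2,\infty)}=\C_{(s_{1,0}/2,s_{0,0}/2)},\ \C_{(s_{2,0}/2,s_{1,0}/2)},\ \dots,\ \C_{(s_{d+1,0}/2,s_{d,0}/2)}=\C_{(s_{1,1}/2,s_{d,0}/2)},\ \dots
\]
Consecutive strips are separated by the line $\rmRe(u)=s_{j,i}/2$. Along this line exactly one new candidate atom appears, namely $u_{-s_j,i}$ at time $-s_j$, because $\rmRe(u)-s_j/2$ crosses the nonpositive integer $-i$. The base case is $\C_{(s_1/2,\infty)}$. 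There $\rmRe(u)>s_1/2$ and \eqref{eq:v} hold, so \eqref{eq fan} is satisfied after restricting to real $u,v$. The fan-region identity of Section~\ref{sec:identity fan} then gives $H_\emptyset=\Psi_{u,v}=\wt\Psi_{u,v}$, and the identity theorem extends this to complex $u$, with $v$ fixed, once we know $H_\emptyset$ is holomorphic there. Holomorphy follows from the Morera argument, using Stirling bounds on the densities \eqref{eq:transitional cc}. The final statement $\Psi=\wt\Psi$ on every strip then follows from the continuation relation, Proposition~\ref{prop:path decomp}, Proposition~\ref{prop:analytic RN}, and uniqueness of analytic continuation.

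For the inductive step, fix a crossing at $\rmRe(u)=s_{j,i}/2$ and set $w:=u-s_j/2$, so that the crossing is at $\rmRe(w)=k:=-i$. In every term $H_\vvr$ present on the upper strip, the coordinate $y_j$ either already sits at an atom or is integrated over $(0,\infty)$. Only the terms with $y_j$ continuous can produce new poles, namely those with $\vvr$ of length $\ell=j-1$ (including $\emptyset$ when $j=1$). In such a term we substitute $y_j=-z^2$, $z\in\i\R$, as in \eqref{eq:f-1,-1}. The $y_j$-dependent part of the measure is then
\begin{itemize}
\item $\mathfrak p^{u,v,c}_{-s_1}$ if $j=1$,
\item $\mathfrak p^{u,v,d,c}_{-s_{j-1},-s_j}(u_{-s_{j-1},r_{j-1}},\cdot)$ if $\ell=j-1\ge1$,
\end{itemize}
multiplied by the subsequent kernel $\mathfrak p^{c,c}_{-s_j,-s_{j+1}}(y_j,\cdot)$ and the exponential. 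In both cases the only factor singular at the crossing is $\Gamma(u-s_j/2\pm z/2)=\Gamma(w\pm z/2)$, together with $1/\Gamma(\pm z)$. The remaining factors are even in $z$ and holomorphic near the crossing; here we use $\sum_{k\le d-1}c_k<1$ and $\rmRe(u)-s_m/2\notin\Z_{\le0}$ for the other indices $m$.

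We then apply Lemma~\ref{lem:Alexey} in the variable $w$, with all other integration variables frozen, and use Fubini. The residue term is $\wt h(w,2(w-k))$, i.e.\ the integrand evaluated at $y_j=-4(w+i)^2=u_{-s_j,i}$, times the constant in \eqref{eq:1/2}. The core computation is to check, using the Gamma recurrence and \eqref{eq:pd u'}, that this constant times the continuous factor equals the corresponding atomic weight:
\begin{itemize}
\item $\mathfrak p^{u,v,d}_{-s_1}(u_{-s_1,i})$ if $j=1$;
\item $\mathfrak p^{d,d}_{-s_{j-1},-s_j}(u_{-s_{j-1},r},u_{-s_j,i})$ if $j>1$.
\end{itemize}
We also check that the next factor $\mathfrak p^{c,c}_{-s_j,-s_{j+1}}(y_j,y_{j+1})$ evaluated at $y_j=u_{-s_j,i}$ becomes $\mathfrak p^{d,c}_{-s_j,-s_{j+1}}(u_{-s_j,i},y_{j+1})$. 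This is plausible, since $\Gamma(v+s_j/2\pm\i\sqrt{y_j}/2)$ then turns into the denominator factor of \eqref{eq:transitional dc}. These identities show that the new residue term is exactly $H_{\vvr^{(i)}}$ with $\vvr^{(i)}=(r_1,\dots,r_{j-1},i)$, where $r_{j-1}\le i$. Summing over $\vvr\in\mathbb S^{(i)}_{j-1}$ (respectively $\mathbb S^{(i-1)}_{j-1}$ when $j-1$ lies in the regime with maximal index $i-1$; the index bookkeeping from Proposition~\ref{prop:path decomp} settles this) yields precisely the increment from $\mathbb S^{(i-1)}_j$ to $\mathbb S^{(i)}_j$ in the sums. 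Terms in which $y_j$ is already atomic, or in which the atomic prefix is longer, contain $\mathfrak p^{d,d}$ or $\mathfrak p^{d,c}$ factors. These are holomorphic across the line (by the case analysis after \eqref{eq:transitional dc}) and continue trivially. Their explicit formulas stay valid because the upper index ranges of the $\mathfrak p^{d,d}$ weights are unchanged except at time $-s_j$.

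The main obstacle is verifying the hypotheses (ii)--(iii) of Lemma~\ref{lem:Alexey} uniformly after integrating out the other $d-1$ variables. I would first bound the integrand by a product of Stirling estimates. Each $|\Gamma(a+\i b)|$ factor decays like $e^{-\pi|b|/2}|b|^{\rmRe a-1/2}$, and the factors $e^{-(x_k-x_{k-1})y_k/4}$ give Gaussian decay in each $z_k$ after the substitution $y_k=-z_k^2$; note $x_k-x_{k-1}>0$. These Gaussians dominate all the polynomial factors locally uniformly in $u$ on compact sets and on slightly shifted vertical contours. I would also check separately the index identity: the only pole appearing is the single $\Gamma(w-z/2)$ pole, and the other factors $\Gamma(t/2-u-j-s\pm z/2)$ in \eqref{eq:transitional dc} do not vanish or blow up there. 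This relies on $s_1-s_d<1$.
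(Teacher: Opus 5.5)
Your proposal is correct and is essentially the paper's argument. Both use the same strip-by-strip continuation in $u$ via Lemma~\ref{lem:Alexey}, applied only to the terms whose first continuous coordinate sits at the time where the new atom appears; all other terms continue unchanged, and the residue is identified as $H_{\vvr\topp i}$ through Gamma identities. The paper bundles the later integrations into $I_{\ell+1}$ and cancels the $v$-Gamma factors before applying the lemma, so your two identities become \eqref{eq:ell=d-1} plus its evaluation of $I_{\ell+1}(2u-s_{\ell+1,i})$.

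Two small corrections:

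(1) When $r:=r_{j-1}>0$, the singular factor of $\mathfrak p^{u,v,d,c}_{-s_{j-1},-s_j}(u_{-s_{j-1},r},\cdot)$ is $\Gamma(w+r\pm z/2)$, not $\Gamma(w\pm z/2)$. Lemma~\ref{lem:Alexey} should therefore be applied in the variable $w+r$ with integer $-(i-r)$, as the paper does. Equivalently, absorb the even entire factor $[w+z/2]_r[w-z/2]_r$ into $\wt h$. Either way, the pole location $y_j=u_{-s_j,i}$ and the residue are unchanged.

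(2) For complex $v$, your base case also needs joint holomorphy of $H_\emptyset$ in $(u,v)$ on $\C_{(s_1/2,\infty)}\times\C_{(-s_d/2,\infty)}$, followed by the identity theorem in $v$. The fan-region identity is only available for real $u>s_1/2$ and $v>0$.
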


When $d=1$, we have $x_1=1$ and $s_1=c_1$. The marginal measure formulas and the definitions of $\wt\Psi$ and $\wt K$ give
\begin{align*}
\Psi_{u,v}(1,c_1) =K_{u-c_1/2,v+c_1/2},
 & &  \wt\Psi_{u,v}(1,c_1) =\wt K_{u-c_1/2,v+c_1/2}.
\end{align*} Since $-c_1\in\mathbb T_{u,v}$, we have $\rmRe(u-c_1/2)\notin\Z_{\le0}$ and $\rmRe(v+c_1/2)>0$, so $(u-c_1/2,v+c_1/2)\in\calD$. Thus Proposition \ref{prop:denom} proves the claimed result. From now on, we assume $d\ge2$. 

\subsubsection{The beginning regime}
We first work under the parameter restriction
\equh\label{eq:uv good}
-2\rmRe(u)<-s_1<\cdots<-s_d<2\rmRe(v).
\eque
For real $u>s_1/2$ and $v>0$, it is known that 
\[
H_\emptyset(u)\equiv H_{\vvx,\vvc,\emptyset}(u,v)
=\Psi_{u,v}(\vvx,\vvc)
=\wt\Psi_{u,v}(\vvx,\vvc).
\]
For convenience we present a proof in Appendix \ref{sec:identity fan}.

Moreover, the contour integral in \eqref{eq:Psi_u,v 1} and uniform
Stirling estimates give a jointly holomorphic extension of
$H_\emptyset(u)$ to
$\C_{(s_1/2,\infty)}\times\C_{(-s_d/2,\infty)}$.
The function $\wt\Psi_{u,v}(\vvx,\vvc)$ is also holomorphic there by
Proposition \ref{prop:analytic RN}. Since these functions agree for real
$u>s_1/2$ and $v>0$, applying the identity theorem first in $u$
and then in $v$ shows that they agree throughout this product domain.
Consequently,
$\Psi_{u,v}(\vvx,\vvc)=\wt\Psi_{u,v}(\vvx,\vvc)$
for $(u,v)$ in the intersection of this domain with $\calD$. (The extension above does not require the restriction $\min(u,v)>-1$ for real parameters in \citet[Proposition~1.4]{bryc23markov}, which was needed there when matching their duality formula with the range of Laplace parameters covered by \citet{corwin24stationary}.)

Fix $v$ with $\rmRe(v)>-s_d/2$ and $\rmRe(v)\notin\Z_{\le0}$. Since $\wt\Psi_{u,v}(\vvx,\vvc)$ is entire, by uniqueness of analytic continuation, it remains only to prove that the expression defining $\Psi_{u,v}(\vvx,\vvc)$, as a function of $u$, on each admissible strip is obtained by analytic continuation from the initial strip $\C_{(s_1/2,\infty)}$. Each time a boundary is crossed, new multiple-integral terms appear in the expression in the next strip, essentially due to the residues collected when a contour crosses certain poles. The proof is by an induction argument. 

\subsubsection{An overview of the argument}
We illustrate the induction by showing how the atomic contributions accumulate as $\rmRe(u)$ decreases. 

\begin{enumerate}[(i)]
\item First, we shall show that 
\equh\label{eq:deform H_emptyset}
\pp{H_\emptyset(u)}_{u\in\C_{(s_1/2,\infty)}}
\cequiv
\pp{H_\emptyset(u)+H_{(0)}(u)}
_{u\in\C_{(s_2/2,s_1/2)}}.
\eque
\item Next, we have (assuming $d\ge 3$)
\begin{align*}
\pp{H_\emptyset(u)}_{u\in\C_{(s_2/2,s_1/2)}}&\cequiv\pp{H_\emptyset(u)}_{u\in\C_{(s_3/2,s_2/2)}},
\\
\pp{H_{(0)}(u)}_{u\in\C_{(s_2/2,s_1/2)}}& \cequiv\pp{H_{(0)}(u)+H_{(0,0)}(u)}_{u\in\C_{(s_3/2,s_2/2)}}.
\end{align*}
In fact, $H_\emptyset$ is holomorphic over $\C_{(s_3/2,s_1/2)}$.
\item Iterating the procedure we obtain, with $\vv0_\ell:=(0,\ldots,0)\in\mathbb S_\ell^{(0)}$,
\[
\pp{H_\emptyset(u)}_{u\in\C_{(s_1/2,\infty)}}\cequiv\pp{H_\emptyset(u)+\summ \ell1d H_{\vv0_\ell}(u)}_{u\in\C_{(s_1/2-1,s_d/2)}},
\]
Recall that we assume $\summ i1{d-1}c_i<1$. By the conventions $s_{j,i}=s_j-2i$ and $s_{0,i+1}=s_{d,i}$, we have $(s_1/2-1,s_d/2) = (s_{1,1}/2,s_{0,1}/2)$.

\item Applying the same argument as in
\eqref{eq:deform H_emptyset} at the next boundary gives
\[
\pp{H_{\emptyset}(u)}_{u\in\C_{(s_{1,1}/2,s_{0,1}/2)}}\cequiv\pp{H_{\emptyset}(u)+H_{(1)}(u)}_{u\in\C_{(s_{2,1}/2,s_{1,1}/2)}}.
\]
The terms $H_{\vv0_\ell}(u)$ also extend unchanged across this boundary. Thus, 
\[
\pp{H_{\emptyset}(u)}_{u\in\C_{(s_{1}/2,\infty)}}\cequiv\pp{H_{\emptyset}(u)+\summ \ell1 dH_{\vv0_\ell}(u)+H_{(1)}(u)}_{u\in\C_{(s_{2,1}/2,s_{1,1}/2)}}.
\]
\item Next, when extending from $\C_{(s_{2,1}/2,s_{1,1}/2)}$ to $\C_{(s_{3,1}/2,s_{2,1}/2)}$, the  function $H_\vvr(u)$ remains unchanged for all $\vvr$ involved with {\em two} exceptions: $\vvr = (0)$ and $\vvr = (1)$. We have
\[
\pp{H_{(r)}(u)}_{u\in \C_{(s_{2,1}/2,s_{1,1}/2)}}\cequiv\pp{H_{(r)}(u)+H_{(r,1)}(u)}_{u\in \C_{(s_{3,1}/2,s_{2,1}/2)}}, \quad r=0,1.
\]
Note that this time, when a pole is crossed, multiple (two here) terms show up in the extension. 
\end{enumerate}
In each of the steps above, we apply Lemma \ref{lem:Alexey}. We repeat this procedure to obtain continuation to domains in which $\rmRe(u)$ is arbitrarily small.

\subsubsection{Proof by induction}

In the rest of this section, we first provide a detailed proof of \eqref{eq:deform H_emptyset} to illustrate the method, and then prove a general statement for the induction in Lemma \ref{lem:induction}.
\begin{proof}[Proof of \eqref{eq:deform H_emptyset}]
We first apply Lemma \ref{lem:Alexey} to continue $H_\emptyset(u)$ locally across the line $\rmRe(u)=s_1/2$. We then identify the resulting residue term with $H_{(0)}(u)$ and extend the continuation. Through this approach, we will establish that
\[
\pp{H_\emptyset(u)}
_{u\in\C_{(s_1/2,\infty)}}
\cequiv
\pp{H_\emptyset(u)+H_{(0)}(u)}
_{u\in\C_{(s_2/2,s_1/2)}}.
\]
After substituting \eqref{eq:pc} and \eqref{eq:transitional cc} into the definition of $H_\emptyset(u)$, and setting $z_k=\i\sqrt{y_k}$, we obtain
\begin{multline}\label{eq:Psi_u,v 1}
H_\emptyset(u) = \frac1{8\pi\i\Gamma(u+v)}\int_{(\i\R)^d}e^{\summ k1d (x_k-x_{k-1})z^2_k/4}
\\ \times \ddfrac{\Gamma\pp{u-\frac{s_1}2\pm \frac{z_1}2, v+\frac{s_d}2 \pm \frac{z_d}2}}{\Gamma(\pm z_1)} \prodd k1{d-1}\ddfrac{\Gamma\pp{\frac{c_k+z_k}2\pm \frac{z_{k+1}}2, \frac{c_k-z_k}2\pm \frac{z_{k+1}}2}}{ 8\pi\i\Gamma(c_k)\Gamma\left(\pm z_{k+1}\right)}\d \vv z. \end{multline} 
Here $\mathrm d\vvz:=\mathrm dz_1\cdots\mathrm dz_d$. Under \eqref{eq:uv good}, the Gamma factors in the numerator have no poles on the contours. One can check that $H_\emptyset(u)$ is holomorphic over $\C_{(s_1/2,\infty)}$.

We rewrite \eqref{eq:Psi_u,v 1} as
\[
H_\emptyset(u) = \frac1{\Gamma(u+v)}\frac1{8\pi\i} \int_{\i\R}\ddfrac{\Gamma\pp{u-\frac{s_1}2\pm\frac {z_1}2}}{\Gamma(\pm z_1)}I_1(z_1)\d z_1
\]
with
\begin{multline*}
I_1(z_1) :=\frac1{(8\pi\i)^{d-1}}\int_{(\i\R)^{d-1}}e^{\summ k1d(x_k-x_{k-1})z_k^2/4}\\
\times
\prodd k1{d-1}\frac{\displaystyle\Gamma\left( \frac{c_k+z_k}{2}\pm\frac{z_{k+1}}2, \frac{c_k-z_k}{2}\pm\frac{z_{k+1}}2\right)}{\displaystyle 
\Gamma(c_k,\pm z_{k+1})}\cdot\Gamma\pp{v+\frac{s_d\pm z_d}2}\d \vv z_{2:d}.
\end{multline*} 
Here  $\mathrm d\vvz_{2:d}:=\mathrm dz_2\cdots\mathrm dz_d$. Choose $0<\epsilon<
\min\left\{ 1/2,c_1/2, \rmRe(v)+s_1/2 \right\}$. For this choice of $\epsilon$, the function $I_1$ is holomorphic and even on $\C_{(-2\epsilon,2\epsilon)}$. 

Applying Lemma \ref{lem:Alexey}, we obtain 
\begin{multline}\label{eq:1st step}
\pp{
\frac1{8\pi\i}
\int_{\i\R}
\ddfrac{
\Gamma\pp{u-\frac{s_1}{2}\pm\frac{z_1}{2}}
}{
\Gamma(\pm z_1)
}
I_1(z_1)\,\d z_1
}_{u\in\C_{(s_1/2,s_1/2+\epsilon)}}
\\
\cequiv
\pp{
\frac1{8\pi\i}
\int_{\i\R}
\ddfrac{
\Gamma\pp{u-\frac{s_1}{2}\pm\frac{z_1}{2}}
}{
\Gamma(\pm z_1)
}
I_1(z_1)\,\d z_1
+
\frac{I_1(2u-s_1)}
{\Gamma(-2u+s_1)}
}_{u\in\C_{(s_1/2-\epsilon,s_1/2)}}.
\end{multline}
For $d\geq2$, we note that $s_1-s_2=c_1$, and recall that we assume $0<c_1<1$. Therefore, for $u\in\C_{(s_2/2,s_1/2)}$ and $z_1\in\i\R$, \[-\frac{c_1}{2}<\rmRe\left(u-\frac{s_1}{2}\pm\frac{z_1}{2}\right)<0.\]
The integral representation of $I_1$ shows that it is
holomorphic on $\C_{(-c_1,c_1)}$. Since $u\in\C_{(s_2/2,s_1/2)}$ implies $2u-s_1\in\C_{(-c_1,0)}$, the residue term is holomorphic on $\C_{(s_2/2,s_1/2)}$. Therefore, the expressions on the left-hand side and the right-hand side of \eqref{eq:1st step}  extend holomorphically to $\C_{(s_1/2,\infty)}$ and $\C_{(s_2/2,s_1/2)}$, respectively.

Furthermore, we can write
\[
I_1\pp{2u-s_1}  = \frac{e^{x_1(u-s_1/2)^2}}{8\pi\i}\int _{\i\R}\ddfrac{\Gamma\pp{u-\frac{s_2}2\pm\frac{z_2}2}}{\Gamma(\pm z_2)}
\ddfrac{\Gamma\pp{\frac{c_1}2-u+\frac{s_1}2\pm \frac{z_2}2}}{\Gamma(c_1)}I_2(z_2)\d z_2
\]
with
\begin{multline*}
I_2(z_2) := 
\frac1{(8\pi\i)^{d-2}}
\int_{(\i\R)^{d-2}}e^{\summ k2d(x_k-x_{k-1})z_k^2/4}\\
\times
\prodd k2{d-1}\frac{\displaystyle\Gamma\left( \frac{c_k+z_k}{2}\pm\frac{z_{k+1}}2, \frac{c_k-z_k}{2}\pm\frac{z_{k+1}}2\right)}{\displaystyle 
\Gamma(c_k,\pm z_{k+1})} \Gamma\pp{v+\frac{s_d\pm z_d}2}\d \vv z_{3:d}.
\end{multline*}
Then we can write \begin{align*}
H_{(0)}(u) & = \frac1{8\pi\i}\int_{\i\R}\ddfrac{\Gamma\pp{u-\frac{s_2}2\pm\frac{z_2}2}}{\Gamma(\pm z_2)} e^{x_1(u-s_1/2)^2}\ddfrac{\Gamma\pp{\frac{c_1}2-u+\frac{s_1}2\pm \frac{z_2}2}}{\Gamma\pp{c_1,u+v,-2u+s_1}}I_2(z_2)\d z_2.
\end{align*}
(See also the more general formula for $H_\vvr(u)$ in \eqref{eq:H_vvr factorization} below.)

We have thus shown
\begin{align*}
\pp{H_\emptyset(u)}_{u\in\C_{(s_1/2,\infty)}} & \cequiv\pp{H_\emptyset(u)+\frac{I_1(2u-s_1)}{\Gamma(u+v,-2u+s_1)}}_{u\in\C_{(s_2/2,s_1/2)}}
\\
&
 \equiv\pp{H_\emptyset(u)+H_{(0)}(u)}_{u\in\C_{(s_2/2,s_1/2)}},
\end{align*}
as claimed. 
\end{proof}
An almost identical calculation yields that for every $i\in\Z_{\geq0}$,
\begin{equation}\label{eq:deform H_emptyset general}
\pp{H_{\emptyset}(u)}_
{u\in\C_{\pp{s_{1,i}/2,s_{0,i}/2}}}
\cequiv
\pp{H_{\emptyset}(u)+H_{(i)}(u)}_
{u\in\C_{\pp{s_{2,i}/2,s_{1,i}/2}}}.
\end{equation}
The following lemma, together with \eqref{eq:deform H_emptyset general}, yields Proposition~\ref{prop:num} by induction.
\begin{lemma}\label{lem:induction}
Fix $i\in\Z_{\geq0}$. The following statements hold: 
\begin{enumerate}[(a)]
\item For each $\ell\in\llbracket d-1\rrbracket $ and $\vvr=(r_1,\dots,r_\ell)\in \mathbb S_\ell\topp i$, setting $\vvr\topp i:=(r_1,\dots,r_\ell,i)$ we have
\equh\label{eq:H_vvr ell}
\pp{H_\vvr(u)}_{u\in\C_{\pp{s_{\ell+1,i}/2,s_{\ell,i}/2}}}\cequiv
\pp{H_{\vvr}(u) + H_{\vvr\topp i}(u)}_{u\in\C_{\pp{s_{\ell+2,i}/2,s_{\ell+1,i}/2}}}.
\eque
\item For each $\ell\in\llbracket d-1\rrbracket $ and $\vvr\in\spp{\bigcup_{\ell'=1}^{\ell-1}\mathbb S_{\ell'}\topp i} \cup \spp{\bigcup_{\ell'=\ell+1}^d\mathbb S_{\ell'}\topp{i-1}}$, we have
\[
\pp{H_\vvr(u)}_{u\in\C_{\pp{s_{\ell+1,i}/2,s_{\ell,i}/2}}}\cequiv\pp{H_{\vvr}(u)}_{u\in\C_{\pp{s_{\ell+2,i}/2,s_{\ell,i}/2}}}.
\]
\item For all $\vvr\in \mathbb S_d\topp i$, $H_\vvr(u)$ is holomorphic for $u\in\C_{(-\infty,s_{d,i}/2)}$.
\end{enumerate}
\end{lemma}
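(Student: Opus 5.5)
The plan is to write each $H_\vvr(u)$, for $\vvr=(r_1,\dots,r_\ell)$, in a factorized contour-integral form analogous to the formula for $H_{(0)}$ obtained in the proof of \eqref{eq:deform H_emptyset}. The first $\ell$ coordinates sit at atoms, so they contribute explicit Gamma-ratio prefactors: the atomic weight $\mathfrak p^{u,v,d}_{-s_1}(u_{-s_1,r_1})$ times the discrete-discrete kernels, together with the factor $e^{x_1(u-s_1/2+r_1)^2}\cdots$. Coordinate $\ell+1$ comes from the discrete-to-continuous kernel \eqref{eq:transitional dc}. The remaining coordinates come from the $c,c$ kernels. After substituting $z_k=\i\sqrt{y_k}$, this gives
\[
H_\vvr(u)=P_\vvr(u)\,\frac1{8\pi\i}\int_{\i\R}\frac{\Gamma\pp{u-\frac{s_{\ell+1}}2+r_\ell\pm\frac{z_{\ell+1}}2}}{\Gamma(\pm z_{\ell+1})}\,\wt h_\vvr(u,z_{\ell+1})\,\d z_{\ell+1}.
\]
Here $P_\vvr$ is the explicit atomic prefactor. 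The function $\wt h_\vvr$ contains $\Gamma\pp{\frac{s_\ell-s_{\ell+1}}2-(u-\frac{s_\ell}2+r_\ell)\pm\frac{z_{\ell+1}}2}$, the factor $1/\Gamma(c_\ell)$, and the integral $I_{\ell+1}(z_{\ell+1})$ over $z_{\ell+2},\dots,z_d$ (as in the definition of $I_2$). It is even in $z_{\ell+1}$. I will record this as the general factorization formula (the one referenced as \eqref{eq:H_vvr factorization}).

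\textbf{Part (a).} With $r_\ell\le i$, the only singularity of $H_\vvr$ as $\rmRe(u)$ decreases through $s_{\ell+1,i}/2$ comes from $\Gamma(u-\frac{s_{\ell+1}}2+r_\ell\pm z/2)$ at $r_\ell=i$. I will apply Lemma \ref{lem:Alexey} in the variable $w=u-s_{\ell+1}/2+i$ with $k=0$. A shift of the pole index by $r_\ell$ is handled by taking $k=-(i-r_\ell)$, but only $k=0$ relative to $w$ is actually crossed. For the other $r_\ell<i$, the relevant crossing already happened on earlier strips; I will check this and handle it with the same lemma if necessary. The hypotheses are verified as follows. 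Evenness and holomorphy of $\wt h_\vvr$ on a small strip follow from $c_\ell,\dots,c_{d-1}>0$, $\sum c_k<1$, and $\rmRe(v)+s_d/2>0$, which keep all other Gamma arguments away from poles for small $\epsilon$. The integrability and decay conditions (ii)--(iii) follow from Stirling together with the Gaussian factor $e^{(x_{\ell+1}-x_\ell)z^2/4}$, using $x_{\ell+1}>x_\ell$.

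The residue term is $P_\vvr(u)\,\wt h_\vvr(u,2(u-s_{\ell+1}/2+i))/\Gamma(\dots)$. The main computation is to show that this equals $H_{\vvr\topp i}(u)$. Evaluating at $z_{\ell+1}=2(u-s_{\ell+1}/2+i)$ replaces $y_{\ell+1}$ by the atom $u_{-s_{\ell+1},i}$. The Gamma factors must then reorganize into $P_\vvr(u)\,\mathfrak p^{u,v,d,d}_{-s_\ell,-s_{\ell+1}}(u_{-s_\ell,r_\ell},u_{-s_{\ell+1},i})\,e^{(x_{\ell+1}-x_\ell)(\cdot)^2}$, times the new discrete-to-continuous integral in $z_{\ell+2}$. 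I will verify this identity using the Gamma recurrence, $\Gamma(a+n)/\Gamma(a)=[a]_n$, and the residue formula \eqref{eq:1/2}, in the same way as the $R_k^*$ identification in Proposition \ref{prop:denom}. I expect this bookkeeping of Pochhammer symbols against the $d,d$ kernel, with the sign $(-1)^{j'-j}$, to be the main obstacle.

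Holomorphic extension to the whole new strip $\C_{(s_{\ell+2,i}/2,s_{\ell+1,i}/2)}$ follows as in the proof of \eqref{eq:deform H_emptyset}. On that strip, $\rmRe(u-s_{\ell+1}/2+i\pm z/2)\in(-c_{\ell+1}/2,0)$ on $\i\R$, so no further pole meets the contour. The residue term is holomorphic there because $c_{\ell+1}<1$.

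\textbf{Parts (b) and (c).} For these I check directly from the factorized formula that no pole of the $u$-dependent Gamma factors meets the contour when $u$ ranges over the union of the two strips. For $\vvr$ of length $\ell'<\ell$, the relevant factor is $\Gamma(u-s_{\ell'+1}/2+r\pm z/2)$ with $r\le i$; its real part on $\i\R$ stays in a pole-free interval because $s_{\ell'+1}\ge s_\ell$. For length $\ell'>\ell$ with entries at most $i-1$, the pole line $s_{\ell'+1}/2-r$ lies outside the union of the two strips. For (c) there is no remaining contour, since $H_\vvr$ is the explicit product $P_\vvr(u)$. Its reciprocal Gammas are entire, and its numerator Gammas such as $\Gamma(v-u-j-t)$ have arguments with real part bounded below on $\rmRe(u)<s_{d,i}/2$, which rules out poles there. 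Local uniform convergence plus Morera then gives holomorphy in each case.
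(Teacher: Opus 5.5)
Your overall plan is the paper's: the factorization \eqref{eq:H_vvr factorization}, Lemma \ref{lem:Alexey} applied to the $z_{\ell+1}$-integral, identification of the residue with $H_{\vvr\topp i}$, and direct holomorphy checks for (b) and (c). But part (a) rests on a false claim about which terms meet a pole.

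You assert that as $\rmRe(u)$ decreases through $s_{\ell+1,i}/2$ only paths with $r_\ell=i$ become singular, and that for $r_\ell<i$ ``the relevant crossing already happened on earlier strips.'' In fact $\Gamma(u-\frac{s_{\ell+1}}2+r_\ell-\frac z2)$ has a whole ladder of poles $z=2(u-\frac{s_{\ell+1}}2+r_\ell+m)$, $m\in\Z_{\ge0}$. The $m$-th one crosses $\i\R$ exactly when $\rmRe(u)=s_{\ell+1,r_\ell+m}/2$, and its mirror image from the $+\frac z2$ factor crosses at the same moment. Earlier strips only used up $m<i-r_\ell$. The pole with $m=k:=i-r_\ell$ crosses precisely at the boundary $s_{\ell+1,i}/2$ under consideration.

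Hence every $\vvr\in\mathbb S_\ell\topp i$ picks up a residue there, which is exactly what (a) asserts. For example, with $i=1$ and $\ell=1$, $H_{(0)}$ must spawn $H_{(0,1)}$ at $\rmRe(u)=s_{2,1}/2$. Under your scheme this term is lost, and you would in effect be claiming $H_{(0)}\cequiv H_{(0)}$ across that line, which is false.

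The fix is to apply Lemma \ref{lem:Alexey} in the variable $u-s_{\ell+1,r_\ell}/2$ with integer parameter $-k$, for arbitrary $k\ge0$. The identity $H^*_\vvr=H_{\vvr\topp i}$ must then be verified for general $k$, and this is where the real content lies:

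\begin{itemize}
\item The residue carries $(-1)^k/k!$ and $\Gamma(2u-s_{\ell+1}+r_\ell+i)/\Gamma(\pm(2u-s_{\ell+1,i}))$.
\item The second Gamma pair, evaluated at $z_{\ell+1}=2u-s_{\ell+1,i}$, becomes $\Gamma(c_\ell+k)\,\Gamma(s_\ell-2u-r_\ell-i)$.
\item These must be matched with $\mathfrak p^{u,v,d,d}_{-s_\ell,-s_{\ell+1}}(u_{-s_\ell,r_\ell},u_{-s_{\ell+1},i})$, whose index gap is $j'-j=k$. This is the identity \eqref{eq:ell=d-1}, which produces $[c_\ell]_k$, $[u+v+r_\ell]_k$ and $[2u-s_{\ell+1}+2r_\ell]_k$.
\end{itemize}

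With only $k=0$, as in your plan, $j'=j$, so the Pochhammer bookkeeping and the sign $(-1)^{j'-j}$ you flag as the main obstacle would never arise. That is a symptom of the misidentified case.

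The extension to the full new strip should also be stated for general $r_\ell$. On $\i\R$, the real part of $u-\frac{s_{\ell+1}}2+r_\ell\pm\frac z2$ lies in $(r_\ell-i-c_{\ell+1}/2,\,r_\ell-i)$. When $\ell=d-1$, the lower edge of the new strip is $s_{1,i+1}/2$, so the interval is instead $(\frac{s_1-s_d}2-1+r_\ell-i,\,r_\ell-i)$, not one governed by $c_d$.

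Your arguments for (b) and (c) are fine and agree with the paper's.
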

To prove Lemma \ref{lem:induction}, we first rewrite  $H_\vvr(u)$ for convenience. Fix $i\in\Z_{\geq0}$, $\ell\in\llbracket d\rrbracket$, and
$\vvr=(r_1,\dots,r_\ell)\in\mathbb S_\ell\topp i$. 
Suppose $(u,v)\in\calD$ and $-s_1,\dots,-s_d\in\mathbb T_{u,v}$.
If $\rmRe(u)<s_{\ell,r_\ell}/2$, then
$r_k\le j_{u,-s_k}$ for every $k\le\ell$. Indeed,
$s_k\ge s_\ell$ and $r_k\le r_\ell$, so
\[
\rmRe(u)-s_k/2+r_k
\le \rmRe(u)-s_\ell/2+r_\ell<0.
\]
For such $u$, we define
\begin{align}
M_\vvr(u)& \equiv M_{\vvx,\vvc,\vvr}(u,v)\nonumber\\
& :=  e^{-\summ k1{\ell}(x_k-x_{k-1})u_{-s_k,r_k}/4}\mathfrak p_{-s_1}^{u,v,d}(u_{-s_1,r_1})\prodd k1{\ell-1}\mathfrak p_{-s_k,-s_{k+1}}^{u,v,d,d}(u_{-s_k,r_k},u_{-s_{k+1},r_{k+1}}).\nonumber
\end{align}
For $\ell\in \llbracket d-1\rrbracket$, define the function $I_\ell$, which encodes
the remaining contour integrations, by
\begin{multline*}
I_\ell(z_\ell) :=
\frac{e^{(x_\ell-x_{\ell-1})z_\ell^2/4} }{(8\pi\i)^{d-\ell}}
\int_{(\i\R)^{d-\ell}}e^{\summ k{\ell+1} d(x_k-x_{k-1})z_k^2/4}\\
\times
\prodd k\ell{d-1}\ddfrac{\Gamma\left( \frac{c_k+z_k}{2}\pm\frac{z_{k+1}}2, \frac{c_k-z_k}{2}\pm\frac{z_{k+1}}2\right)}{\Gamma(c_k,\pm z_{k+1})}\times \Gamma\pp{v+\frac{s_d\pm z_d}2}\d \vv z_{\ell+1:d}.
\end{multline*}
This formula is consistent with $I_1$ and $I_2$ introduced earlier. We further define 
\equh\label{eq:I_d}
I_d(z_d) := e^{(x_d-x_{d-1})z_d^2/4}\Gamma\pp{v+\frac{s_d\pm z_d}2}.
\eque
In the next lemma we express $H_\vvr$ in terms of $M_\vvr$ and $I_{\ell+1}$ and record properties needed for the
induction steps.
\begin{lemma}\label{lem:H-factorization}
Let $\ell\in\llbracket d\rrbracket $, and suppose $\vvr=(r_1,\dots,r_\ell)\in\mathbb S_\ell\topp i$ for some $i\in\Z_{\geq0}$.
\begin{enumerate}[(i)]
\item The function $M_\vvr(u)$ extends holomorphically over $\C_{(-\infty, s_{\ell,r_\ell}/2)}$. 
\item If $\ell\in\llbracket d-1\rrbracket$, then, writing
$j:=r_\ell$, we have
\equh\label{eq:H_vvr factorization}
H_\vvr(u) = \frac{M_\vvr(u)}{8\pi\i}\int_{\i\R}\ddfrac{\Gamma\pp{u-\frac{s_{\ell+1,j}}2\pm\frac{z_{\ell+1}}2}}{\Gamma(\pm z_{\ell+1})} \ddfrac{\Gamma\bpp{\frac{c_{\ell}}2-\pp{u-\frac{s_{\ell,j}}2}\pm\frac{z_{\ell+1}}2}}{\Gamma(c_{\ell},v+u+j,v-u+s_{\ell}-j)}I_{\ell+1}(z_{\ell+1})\d z_{\ell+1},
\eque
and it is holomorphic over $\C_{(s_{\ell+1,r_\ell}/2,s_{\ell,r_\ell}/2)}$ and $\C_{(s_{\ell+1,k+1}/2, s_{\ell+1,k}/2)}$ for $k\ge r_\ell$. 
\item If $\ell=d$, then $H_\vvr(u) = M_\vvr(u)$. 
\end{enumerate}
\end{lemma}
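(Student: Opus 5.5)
The plan is to read everything off the explicit formulas in Section \ref{sec:CDH}: the atomic weights \eqref{eq:pd u'}, the discrete-discrete kernel $\mathfrak p^{u,v,d,d}$, and the densities \eqref{eq:transitional cc}, \eqref{eq:transitional dc}. The substitution $z_k=\i\sqrt{y_k}$ is the same one used for \eqref{eq:Psi_u,v 1}.

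\emph{Part (i).} We write $M_\vvr(u)$ as a product of three kinds of factors: exponentials of polynomials in $u$, which are entire; the weight $\mathfrak p_{-s_1}^{u,v,d}(u_{-s_1,r_1})$; and the kernel factors $\mathfrak p^{u,v,d,d}_{-s_k,-s_{k+1}}(u_{-s_k,r_k},u_{-s_{k+1},r_{k+1}})$. In \eqref{eq:pd u'} the only possibly singular factor is $\Gamma(v-u+s_1-r_1)$. When $\rmRe(u)<s_{\ell,r_\ell}/2\le s_1/2-r_1$, its argument has real part larger than $\rmRe(v)+s_1/2>0$, so it is finite. In the $d,d$ kernel, the Gamma quotient $\Gamma(v-u-j-t,-2u-2j-s)/\Gamma(v-u-j-s,-2u-2j-t)$, together with the Pochhammer denominators, has only finitely many factors of the form $\Gamma(\cdot)$ or $1/[\cdot]$. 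We will check that their arguments have real parts bounded away from $\Z_{\le0}$ on $\C_{(-\infty,s_{\ell,r_\ell}/2)}$, using the bounds $\rmRe(u)-s_k/2+r_k<0$ and $\rmRe(v)+s_k/2>0$. The quotient $\Gamma(-2u-2j-s)/\Gamma(-2u-2j-t)$ combined with $[1+2u+2j+s]_{j'-j}$ needs extra care: we rewrite it via the reflection/recurrence as a ratio of reciprocal Gammas times a polynomial, which is entire. This part is bookkeeping.

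\emph{Part (ii).} We integrate out the first $\ell$ coordinates, which sit at the atoms; this produces the factor $M_\vvr(u)$ times the density $\mathfrak p^{u,v,d,c}_{-s_\ell,-s_{\ell+1}}(u_{-s_\ell,j},y_{\ell+1})$. The remaining $d-\ell-1$ transitions are $c,c$ densities. In these, the $v$-Gamma factors telescope, leaving only $\Gamma(v+s_d/2\pm z_d/2)$, which is absorbed into $I_{\ell+1}$. The factor $e^{(x_{\ell+1}-x_\ell)z_{\ell+1}^2/4}$ comes from $-y/4=z^2/4$ and is also absorbed into $I_{\ell+1}$.

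Next we expand \eqref{eq:transitional dc} with $s=-s_\ell$, $t=-s_{\ell+1}$, so $(t-s)/2=c_\ell/2$. The numerator factor $\Gamma(c_\ell/2+(u+j-s_\ell/2)\pm z/2)$ becomes $\Gamma(u-s_{\ell+1,j}/2\pm z/2)$, and $\Gamma(c_\ell/2-(u+j-s_\ell/2)\pm z/2)$ is the second factor in \eqref{eq:H_vvr factorization}. In the denominator, $\Gamma(v-s/2\pm(u+j+s/2))$ gives $\Gamma(v+u+j)\Gamma(v-u+s_\ell-j)$. The Jacobian $\d y=2\sqrt y\,\d\sqrt y$, together with $|\Gamma(\i\sqrt y)|^2=\Gamma(\pm z)$ and the orientation, produces the prefactor $1/(8\pi\i)$, just as in \eqref{eq:Psi_u,v 1}. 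Exchanging the order of integration is justified by Stirling decay.

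For the holomorphy claims, the integrand is holomorphic in $u$ as long as neither $u-s_{\ell+1,j}/2\pm z/2$ nor $c_\ell/2-u+s_{\ell,j}/2\pm z/2$ hits a pole for $z\in\i\R$. This requires $\rmRe(u)\ne s_{\ell+1,j}/2-m$ and $\rmRe(u)\ne s_{\ell+1,j}/2+m'$ with $m,m'\in\Z_{\ge0}$. Both conditions hold on each strip $\C_{(s_{\ell+1,k+1}/2,s_{\ell+1,k}/2)}$ with $k\ge j$, and on $\C_{(s_{\ell+1,j}/2,s_{\ell,j}/2)}$, the latter because $c_\ell<1$. The factor $1/\Gamma(\cdots)$ is entire and $M_\vvr$ is holomorphic there by (i). Holomorphy of the integral then follows from locally uniform integrability (Stirling, using the Gaussian factors hidden in $I_{\ell+1}$) and the Morera argument of Lemma \ref{lem:Alexey}.

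\emph{Part (iii).} Here all coordinates are atoms, so $H_\vvr=M_\vvr$ by definition.

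The main obstacle is the pole and integrability verification in (ii). The key issue is that $\rmRe(u)$ stays off the lattice $s_{\ell+1,j}/2+\Z$ on the stated strips. The Gamma reductions in (i) are the other place where a careful check is needed.
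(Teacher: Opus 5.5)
Your proposal is correct and follows the paper's argument: expand $H_\vvr$ from the definitions, let the $v$-Gamma factors telescope down to $\Gamma(v+\frac{s_d}{2}\pm\frac{z_d}{2})$, and substitute $z_k=\i\sqrt{y_k}$; the paper also leaves the holomorphy claims as routine. There are two harmless slips. First, the recurrence turns $\Gamma(-2u-2j-s)/[1+2u+2j+s]_{j'-j}$ into $(-1)^{j'-j}\Gamma(-2u-s-j-j')$, which is the paper's rewritten kernel; this is not entire, but it is pole-free on $\C_{(-\infty,s_{\ell,r_\ell}/2)}$ because its argument has positive real part there. Second, the poles of $\Gamma\bigl(\frac{c_\ell}{2}-(u-\frac{s_{\ell,j}}{2})\pm\frac{z}{2}\bigr)$ for $z\in\i\R$ lie at $\rmRe(u)\in s_{\ell+1,j}/2+c_\ell+\Z_{\ge0}$, not at $s_{\ell+1,j}/2+\Z_{\ge0}$; they are therefore to the right of $s_{\ell,j}/2$, and your appeal to $c_\ell<1$ at that step is unnecessary.
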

\begin{proof}
We only check the rewriting in \eqref{eq:H_vvr factorization}. The rest is routine and omitted. 
By definition, 
\begin{multline*}
H_\vvr(u) = e^{-\summ k1{\ell}(x_k-x_{k-1})u_{-s_k,r_k}/4} \int_{(0,\infty)^{d-\ell}}e^{-\summ k{\ell+1}d(x_k-x_{k-1})y_k/4}\mathfrak p_{-s_1}^{u,v,d}(u_{-s_1,r_1})
\\
\times\prodd k1{\ell-1} \mathfrak p_{-s_k,-s_{k+1}}^{u,v,d,d}(u_{-s_k,r_k},u_{-s_{k+1},r_{k+1}})   \mathfrak p_{-s_{\ell},-s_{\ell+1}}^{u,v,d,c}\pp{u_{-s_{\ell},r_{\ell}},y_{\ell+1}}\prodd k{\ell+1}{d-1}\mathfrak p_{-s_k,-s_{k+1}}^{u,v,c,c}(y_k,y_{k+1})\d\vvy_{\ell+1:d}.
\end{multline*}
For all $j\in \llbracket 0, j_{u,s}\rrbracket , j'\in\llbracket j, j_{u,t}\rrbracket $, we re-write $\mathfrak p_{s,t}^{u,v,d,d}$ in the following form
\begin{multline*}
\mathfrak p_{s,t}^{u,v,d,d}(u_{s,j},u_{t,j'}) = \frac{\Gamma(v-u-t-j',-2u-s-j-j')}{\Gamma(v-u-s-j,-2u-t-2j)}\\
\times\frac{[2u+t+2j,v+u+j,t-s]_{j'-j}(2u+t+2j')}{2u+t+2j}\frac{(-1)^{j'-j}}{(j'-j)!}, 
\end{multline*}
Recalling the expressions for $\mathfrak p^{u,v,d,c}_{-s_\ell,-s_{\ell+1}},\mathfrak p^{u,v,c,c}_{-s_k,-s_{k+1}}$ in \eqref{eq:transitional dc} and \eqref{eq:transitional cc}, canceling Gamma factors involving $v$, and setting $z_k=\i\sqrt{y_k}$ for $k\in\llbracket \ell+1,d\rrbracket $, we obtain \eqref{eq:H_vvr factorization}.
The case $\ell=d$ is immediate.
\end{proof}
\begin{proof}[Proof of Lemma \ref{lem:induction}]
To prove part~\textup{(a)}, fix $\ell\in\llbracket d-1\rrbracket$ and $\vvr=(r_1,\dots,r_\ell)\in\mathbb S_\ell\topp i$, and set $j:=r_\ell$ and $k:=i-j$. 
The contour integral in \eqref{eq:H_vvr factorization} has the form required in Lemma \ref{lem:Alexey}. This time, we apply Lemma \ref{lem:Alexey} to the variable $u-s_{\ell+1,j}/2$, with the integer parameter in that lemma equal to $-k$, to obtain (for $\epsilon>0$ small enough)
\equh\label{eq:H*}
\pp{H_\vvr(u)}_{u\in\C_{\pp{s_{\ell+1,i}/2,s_{\ell+1,i}/2+\epsilon}}}\cequiv
\pp{H_\vvr(u)+H_\vvr^*(u)}_{u\in\C_{\pp{s_{\ell+1,i}/2-\epsilon,s_{\ell+1,i}/2}}},
\eque
where $H_\vvr^*(u)$ denotes the resulting residue term. 

We will show
that $H_\vvr^*(u)=H_{\vvr\topp i}(u)$. By Lemma \ref{lem:Alexey}, \eqref{eq:H*} holds with 
\begin{multline*}
H_\vvr^*(u)= \frac{M_\vvr(u)}{\Gamma(c_\ell,v+u+j,v-u+s_\ell-j)}\Gamma\pp{\frac{c_\ell}2-\pp{u-\frac{s_{\ell,j}}2\pm \frac{z_{\ell+1}}2}}\Bigg\vert_{z_{\ell+1} = 2u-s_{\ell+1,i}}\\
 \times  I_{\ell+1}\pp{2u-s_{\ell+1,i}} \ddfrac{\Gamma(2u-s_{\ell+1}+j+i)}{\Gamma\pp{\pm(2u-s_{\ell+1}+2i)}} \frac{(-1)^k}{k!}.
\end{multline*}
Substituting $z_{\ell+1}=2u-s_{\ell+1,i}$ yields
\[
\Gamma\pp{\frac{c_\ell}2-\pp{u-\frac{s_{\ell,j}}2\pm \frac{z_{\ell+1}}2}}\Bigg\vert_{z_{\ell+1} = 2u-s_{\ell+1,i}} = \Gamma\pp{c_\ell+k,s_\ell-2u-j-i}.
\]
Moreover, for $\ell\in\llbracket d-1\rrbracket $,
\[
I_\ell(z_\ell) = \frac{e^{(x_\ell-x_{\ell-1})z_\ell^2/4}}{8\pi\i}\int_{\i\R}\ddfrac{\Gamma\pp{\frac{c_\ell+z_\ell}2\pm\frac{z_{\ell+1}}2,\frac{c_\ell-z_\ell}2\pm \frac{z_{\ell+1}}2}}{\Gamma(c_\ell,\pm z_{\ell+1})}I_{\ell+1}(z_{\ell+1})\d z_{\ell+1}.
\] 
Assume first that $\ell\in\llbracket d-2\rrbracket $. Applying the preceding identity with $\ell$ replaced by $\ell+1$, we obtain
\begin{multline*}
I_{\ell+1}\pp{2u-s_{\ell+1,i}}  = \frac{e^{(x_{\ell+1}-x_\ell)(u-s_{\ell+1,i}/2)^2}}{8\pi\i}\\
\times \int_{\i\R}\ddfrac{\Gamma\bpp{u-\frac{s_{\ell+2,i}}2 \pm \frac{z_{\ell+2}}2}}{\Gamma(c_{\ell+1},\pm z_{\ell+2})}\Gamma\pp{\frac{c_{\ell+1}}2-\pp{u-\frac{s_{\ell+1,i}}2}\pm \frac{z_{\ell+2}}2}I_{\ell+2}(z_{\ell+2})\d z_{\ell+2}.
\end{multline*}
By the definition of $M_{\vvr\topp{i}}$, we have
\[
M_{\vvr\topp{i}}(u)
=
M_\vvr(u)
\exp{\left(
-\frac{x_{\ell+1}-x_\ell}{4}
u_{-s_{\ell+1},i}
\right)}
\mathfrak p_{-s_\ell,-s_{\ell+1}}^{u,v,d,d}
\left(
u_{-s_\ell,j},
u_{-s_{\ell+1},i}
\right).
\]

In summary, we have computed
\begin{align*}
H_\vvr^*(u) &= M_\vvr(u) \exp{\left( -\frac{x_{\ell+1}-x_\ell}{4} u_{-s_{\ell+1},i}\right)} \\
& \quad\times \frac{\Gamma(c_\ell+k,-2u+s_\ell-j-i)}{\Gamma(c_\ell,v+u+j,v-u+s_\ell-j)}\frac{\Gamma(2u-s_{\ell+1}+j+i)}{\Gamma\pp{\pm(2u-s_{\ell+1}+2i)}}\frac{(-1)^k}{k!}
\\
& \quad\times  \frac1{8\pi\i}\int_{\i\R}\ddfrac{\Gamma\pp{u-\frac{s_{\ell+2,i}}2
\pm\frac{z_{\ell+2}}2,\frac{c_{\ell+1}}2-\pp{u-\frac{s_{\ell+1,i}}2}\pm\frac{z_{\ell+2}}2}}{\Gamma(c_{\ell+1},\pm z_{\ell+2})} I_{\ell+2}(z_{\ell+2})\,\d z_{\ell+2}.
\end{align*}
Therefore, for $\ell\in\llbracket d-2\rrbracket $, the identity $H_\vvr^*(u)=H_{\vvr\topp i}(u)$ reduces to
\begin{multline}\label{eq:ell=d-1}
\mathfrak p_{-s_\ell,-s_{\ell+1}}^{u,v,d,d}(u_{-s_\ell,j},u_{-s_{\ell+1},i})\\
 = \ddfrac{\Gamma(c_\ell+k,-2u+s_\ell-j-i,u+v+i,v-u+s_{\ell+1}-i,2u-s_{\ell+1}+j+i)}{\Gamma(c_\ell,v+u+j,v-u+s_\ell-j,\pm(2u-s_{\ell+1,j}+2k))}\frac{(-1)^k}{k!}.
\end{multline}
This follows from a straightforward calculation. Indeed, the right-hand side is the same as
\begin{multline*}
 \frac{\Gamma(c_\ell+k,u+v+i,v-u+s_{\ell+1}-i)}{\Gamma(c_\ell,u+v+j,v-u+s_\ell-j)}\frac{(-1)^k}{k!}
\times \frac{\Gamma(-2u+s_\ell-j-i,2u-s_{\ell+1}+j+i)}{\Gamma(\pm(2u-s_{\ell+1}+2i))}\\
 = \frac{[c_\ell,v+u+j]_k\Gamma(v-u+s_{\ell+1}-i)}{\Gamma(v-u+s_\ell-j)}\frac{(-1)^k}{k!}\\\times \frac{\Gamma(-2u+s_\ell-j-i)}{\Gamma(-2u+s_{\ell+1}-2j)}[2u-s_{\ell+1}+2j]_k\frac{2u-s_{\ell+1}+2i}{2u-s_{\ell+1}+2j},
\end{multline*}
which is the same as the left-hand side above.  We have thus proved $H_\vvr^*(u) = H_{\vvr\topp i}(u)$ for $\ell\in\llbracket d-2\rrbracket $.

It remains to deal with the case that $\ell=d-1$. In this case, we can use \eqref{eq:I_d} directly. Since $s_{d,i}=s_d-2i$, it follows that $I_d\pp{2u-s_{d,i}}=e^{(x_d-x_{d-1})(u-s_d/2+i)^2}\Gamma\pp{v+u+i,v-u+s_d-i}$. Substituting this expression into the formula for $H_\vvr^*(u)$ and noticing that \eqref{eq:ell=d-1} also holds, we have shown that  $H_\vvr^*(u)=M_{\vvr\topp i}(u)=H_{\vvr\topp i}(u)$.

It remains to extend the continuation to the full strips which appear in \eqref{eq:H_vvr ell}. This part follows from applying Lemma \ref{lem:H-factorization}. This proves part~\textup{(a)}. By Lemma \ref{lem:H-factorization} we can prove that $H_\vvr(u)$ is holomorphic on
\[
\begin{cases}
\C_{(s_{\ell'+1,i+1}/2,s_{\ell'+1,i}/2)}, & \mbox{ if } \ell'<\ell, \vvr\in\mathbb S_{\ell'}\topp i, i\in\Z_{\ge0},\\
\C_{(s_{\ell'+1,i}/2,s_{\ell'+1,i-1}/2)}, & \mbox{ if }\ell<\ell'<d,\vvr\in\mathbb S_{\ell'}\topp{i-1},i\in\Z_{\ge1}\\
\C_{(-\infty, s_{d,r_d}/2)}, & \mbox{ if } \ell'=d, \vvr\in\mathbb S_d\topp{i}, i\in\Z_{\ge0}.
\end{cases}
\]
In the first two cases, the stated strips contain
$\C_{(s_{\ell+2,i}/2,s_{\ell,i}/2)}$.
The same holds in the third case for the paths appearing in
part~\textup{(b)}: then $\vvr\in\mathbb S_d\topp{i-1}$, so
$r_d\le i-1$ and $s_{d,r_d}>s_{\ell,i}$ by $s_\ell-s_d<1$.
This proves part~\textup{(b)}.
For part~\textup{(c)}, $\vvr\in\mathbb S_d\topp i$ gives
$r_d\le i$ and hence $s_{d,r_d}\ge s_{d,i}$.
Thus the strip in the third case also contains
$\C_{(-\infty,s_{d,i}/2)}$.
\end{proof}

\section{Convergence of the stationary measure of the open WASEP}\label{sec:convergence}

In this section, we explain how the developed formula for finite-dimensional Laplace transform of the process $\zeta\topp{u,v}$ (and hence of $\eta\topp{u,v}_t = \B_t/\sqrt 2+\zeta\topp{u,v}_t$) can be used to provide a proof of the convergence of finite-dimensional distributions of the open WASEP. This program was first carried out by \citet{corwin24stationary} and \citet{bryc23markov} for the fan region. To extend the approach to the shock region, one needs to first extend the Askey--Wilson representation of the stationary measure of the open ASEP to the shock region, and this step was due to \citet{wang24askey}. Then one needs to be able to recognize the limiting finite-dimensional Laplace transform as the one for $\eta\topp{u,v}$, and this step was the major contribution of this paper presented in Section \ref{sec:limiting measures}. The proof here consists of first writing down the Askey--Wilson representation of the finite-dimensional Laplace transform of the stationary measure of the open WASEP. Remarkably, here the representation has again a path decomposition. The proof is then completed by showing that (except for a few integrals that vanish in the limit) each integral in the decomposition converges to an integral in the decomposition for the finite-dimensional Laplace transform of $\eta\topp{u,v}$. The proof does not involve any arguments related to the stationary measure of the open KPZ equation. At the same time, the approach does not prove the tightness of the convergence. 

For each $n\in\N$, the open ASEP is an interacting particle system with particles jumping over a lattice indexed by $1,\dots,n$. This is a fundamental model in nonequilibrium systems \citep{derrida07nonequilibrium}.  Each particle jumps to the right adjacent position at rate 1 and to the left adjacent one at rate $q\in[0,1)$, if the position is empty. Particles also enter the system from the left boundary at the position 1 when empty with rate $\alpha>0$, and from the right boundary at the position $n$ when empty with rate $\delta\ge 0$. A particle at position 1 exits the system from the left with rate $\gamma\ge0$, and a particle at position $n$ exits the system from the right with rate $\beta>0$.  See an illustration in Figure \ref{Fig1}.
The open ASEP is an irreducible continuous-time Markov process taking values in $\{0,1\}^n$ and therefore it has a unique stationary distribution, which we refer to as the stationary measure of the open ASEP. 
\begin{figure}[!ht]
  \begin{tikzpicture}[scale=.9]
\draw [fill=black, ultra thick] (.5,1) circle [radius=0.2];
  \draw [ultra thick] (1.5,1) circle [radius=0.2];
\draw [fill=black, ultra thick] (2.5,1) circle [radius=0.2];
  \draw [ultra thick] (5,1) circle [radius=0.2];
   \draw [fill=black, ultra thick] (6,1) circle [radius=0.2];

    \draw [ultra thick] (7,1) circle [radius=0.2];
      \draw [fill=black, ultra thick] (9.5,1) circle [radius=0.2];
   \draw [ultra thick] (10.5,1) circle [radius=0.2];
     \draw[->] (-1,2.3) to [out=-20,in=135] (.5,1.5);
   \node [above right] at (-.2,2) {$\alpha$};
     \draw[->] (10.5,1.5) to [out=45,in=200] (12,2.3);
     \node [above left] at (11.2,2) {$\beta$};
            \node  at (8.25,1) {$\cdots$};  \node  at (3.75,1) {$\cdots$};
      \node [above] at (6.5,1.8) {$1$};
      \draw[->,thick] (6.1,1.5) to [out=45,in=135] (7,1.5);
        \node [above] at (5.5,1.8) {$q$};
            \draw[<-] (5,1.5) to [out=45,in=135] (5.9,1.5);
                 \node [above] at (10,1.8) {$1$};
                \draw[->,thick] (9.6,1.5) to [out=45,in=135] (10.4,1.5);
               \node [above] at (9,1.8) {$q$};
                 \draw[<-] (8.4,1.5) to [out=45,in=135] (9.4,1.5);
       \draw[<-] (-1,-.3) to [out=0,in=-135] (.5,0.6);
   \node [below right] at (-.2,0) {$\gamma$};
    \node [above] at (0.5,0) {$1$};
    \node [above] at (1.5,0) {$2$};
   \node [above] at (2.5,0) {$3$};
     \node [above] at (3.75,0) {$\cdots$};
   \node [above] at (5,0) {$k-1$};
     \node [above] at (6,0) {$k\vphantom{-1}$};
       \node [above] at (7,0) {$k+1$};
        \node [above] at (8.25,0) {$\cdots$};
         \node [above] at (10.5,0) {$n\vphantom{-1}$};
          \node [above] at (9.5,0) {$n-1$};
        \draw[<-] (10.5,.6) to [out=-45,in=180] (12,-.3);
   \node [below left] at (11.2,0) {$\delta$};
\end{tikzpicture}
\caption{\label{Fig1}Transition rates of the asymmetric simple exclusion process with open boundaries, with parameters $\alpha,\beta,\gamma,\delta, q$. Black disks represent occupied sites.}
\end{figure}

We reparameterize the open ASEP via
\begin{align*}
     A := \kappa^{+}(q,\beta,\delta), & & B := \kappa^{-}(q,\beta,\delta), & & C := \kappa^{+}(q,\alpha,\gamma), & & D := \kappa^{-}(q,\alpha,\gamma),
 \end{align*}
 with
 \begin{align*}
     \kappa^{\pm}(q,x,y) = \frac{1}{2x}\left(1-q-x+y\pm \sqrt{(1-q-x+y)^{2}+4xy}\right).
 \end{align*}
 This reparameterization is often used in the literature, and is intrinsically related to the Askey--Wilson representation we shall apply below. 
The assumptions on the boundary rates imply
$A,C\geq0$ and $-1<B,D\leq0$. 
Then, the so-called {\em fan region} of open ASEP corresponds to the parameters satisfying $AC<1$, and the {\em shock region} corresponds to the parameters satisfying $AC>1$. 

Throughout, we consider a sequence of stationary measures of the open ASEP of size $n\in\N$ each, with parameters chosen as follows:
\equh\label{eq:parameters}
q_n = e^{-2/\sqrt n}, \quad            A_n := q_n^{v}, \quad B_n := -q_n, \quad C_n :=q_n^{u},\quad D_n := -q_n.
\eque
We are interested in the asymptotic behavior of the sequence of stationary measures indexed by $n$, and the sequence of the models with the choice of parameters \eqref{eq:parameters} is often referred to as the {\em weakly asymmetric simple exclusion process with open boundaries} (open WASEP). 
Let $(\tau_{n,1},\dots,\tau_{n,n})$ denote a random vector with the stationary measure of the open ASEP with parameters above.
More specifically, we are interested in the process
\[
 h\topp{u,v}_n(x):=\frac1{\sqrt n}\summ j1{\floor{nx}}\pp{2\tau_{n,j}-1}, x\in[0,1].
\]
Since $A_nC_n=q_n^{u+v}$ and $q_n\in(0,1)$, the fan and shock regions correspond to $u+v>0$ and $u+v<0$, respectively. At the boundary case $u+v=0$, the stationary measure of the open ASEP is a Bernoulli product measure \citep[Remark 2.4]{bryc19limit} (see also \citep[(2.14), (2.15)]{bryc17asymmetric}), and from this fact the scaling limit of the normalized height function is straightforward \citep[Theorem~1.2(3)]{corwin24stationary}. 

The main result on the open WASEP is the following.
\begin{theorem}
\label{thm:main}
Consider $u,v\in\R$. Assume $d\in\N$, $0<x_1<\cdots<x_d=1$ and $c_j>0$ with $j=1,\dots,d$ and $\summ j1{d-1}c_j<1$. Assume $-c_d<2v$. Then, 
\equh\label{eq:fdd conv}
\limn \esp\exp\pp{-\summ j1dc_jh_n\topp{u,v}(x_j)} = \esp\exp\pp{-\summ j1d c_j \pp{\frac1{\sqrt 2}\B_{x_j}+\zeta\topp{u,v}(x_j)}}.
\eque
 where on the right-hand side, $\B$ is a standard Brownian motion, $\zeta\topp{u,v}$ is defined as in \eqref{eq:zeta-RN}, and the two processes are independent. 
In particular, 
\[
\pp{h_n\topp{u,v}(x)}_{x\in[0,1]}\fddto \pp{\frac1{\sqrt 2}\B_x+\zeta\topp{u,v}(x)}_{x\in[0,1]}.
\]
\end{theorem}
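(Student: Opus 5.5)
The plan is to follow the limiting Laplace transform program described in Section~\ref{sec:open WASEP}. First, write the finite-dimensional Laplace transform of $h_n^{(u,v)}$ as a ratio using the signed Askey--Wilson representation of \citet{wang24askey}, valid in both regions. With $A_n=q_n^v$, $C_n=q_n^u$, $B_n=D_n=-q_n$, this gives
\[
\esp\exp\Bigl(-\sum_j c_jh_n^{(u,v)}(x_j)\Bigr)=\frac{\Phi^{(n)}_{u,v}(\vvx,\vvc)}{\Phi^{(n)}_{u,v}(\vv 1,\vv 0)}-\text{type ratio},
\]
where the numerator is an integral of a product of $(1+t_k)$-type factors, raised to powers $\floor{nx_k}-\floor{nx_{k-1}}$, against a $d$-step Askey--Wilson (signed) Markov-type measure. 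The times are $q_n^{-s_k}$-scaled, i.e.\ $t_k=q_n^{-s_k}$ up to the usual conventions. The denominator is the same object with $d=1$ and $c=0$.

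The key structural step is to show that the Askey--Wilson atoms exhibit the same pattern as in Proposition~\ref{prop:path decomp}. Atoms generated by $A_n$ do not appear because $-c_d<2v$. The atoms from $C_n$ at time $t_k$ are indexed by $r$ with $q_n^{u-s_k/2+r}<1$-type conditions. Paths go from atoms to the continuous part $[-1,1]$ but never back, and they have nondecreasing atom indices. The condition $\sum_{k<d}c_k<1$ gives exactly the index sets $\mathbb S^{(i)}_\ell$, $\mathbb S^{(i-1)}_\ell$. This yields the decomposition \eqref{eq:prelimit decomp}. Here $\Phi^{(n),-}$ collects the contributions from the part of the continuous Askey--Wilson spectrum, or atoms, that do not rescale to $(0,\infty)$ or to the $u_{t,j}$. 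For example, these include atoms from $B_n,D_n$, or the region where $x=\cos\theta$ is not near $1$.

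Next come the asymptotics. Substitute $x=\cos(\sqrt{y}/(2\sqrt n))$-type scaling near the edge $x=1$. Each atom location $\frac12(C_nq_n^{r}+(C_nq_n^{r})^{-1})$ is expanded so that its exponent $(1+x)^{n\Delta x}$ behaves like $2^{n\Delta x}e^{-\Delta x\,u_{t,r}/4}$. Using $q$-Gamma to Gamma limits ($\Gamma_{q_n}\to\Gamma$), one shows pointwise convergence of densities and atomic weights to $\mathfrak p^{u,v,c}$, $\mathfrak p^{u,v,d}$ and the kernels of Section~\ref{sec:CDH}, with the common factor $\Gamma(u+v+2)$ and the Brownian factor $e^{G(\vvx,\vvc)}$ coming from the $2^{n}$ normalization and the Bernoulli part. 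This gives $n^{u+v}\Phi_\vvr^{(n)}\to\Gamma(u+v+2)e^{G}H_\vvr(u)$ and the same for $\emptyset$; the denominator similarly converges to $\Gamma(u+v+2)K_{u,v}$ by Proposition~\ref{prop:denom}. The tail term $\Phi^{(n),-}$ is $o(n^{-(u+v)})$, because $(1+x)/2$ is bounded away from $1$ there and gives geometric decay in $n$.

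The main obstacle is the dominated convergence in the continuous integrals over $(0,\infty)^{d-\ell}$. One needs uniform-in-$n$ bounds on ratios of $q$-Pochhammer symbols, comparable to Stirling bounds, so as to dominate the rescaled densities by an integrable function like $e^{-\delta\sum y_k}\prod(1+y_k)^{C}$. Here the exponential factor $((1+\cos(\sqrt y/2\sqrt n))/2)^{n\Delta x}\le e^{-c\,y\,\Delta x}$ gives the decay for $\sqrt y\lesssim\sqrt n$. I would first try the standard bound $|(a e^{i\theta};q)_\infty|$ comparisons used in \citet{corwin24stationary}, splitting the range $y\le\delta n$ and $y>\delta n$.

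Finally, the ratio converges to $\Psi_{u,v}/K_{u,v}$ times $e^{G}$. For $v$ real with $(u,v)\in\calD$, Theorem~\ref{thm:AC} identifies this limit as the right side of \eqref{eq:fdd conv}. Other real $(u,v)$ follow by continuity in $(u,v)$ of both sides, via Remark~\ref{rem:D approx}, together with a uniform-in-parameters version of the limits. Since $\vvc$ ranges over an open set, f.d.d.\ convergence then follows from \citep[Appendix A]{bryc19limit}.
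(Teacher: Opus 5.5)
Your program agrees with the paper's up to the final step. That includes the signed Askey--Wilson representation \citep{wang24askey}, a prelimit path decomposition mirroring Proposition~\ref{prop:path decomp}, $q$-Gamma asymptotics with dominated convergence, identification through Theorem~\ref{thm:AC}, and Laplace transforms on an open set of $\vvc$. Your time and atom conventions have sign slips (the times are $q_n^{s_k}$, and the $C_n$-atoms need $q_n^{u-s_k/2+r}>1$), but these are cosmetic.

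The genuine gap is the extension to all real $(u,v)$. Your direct computation only works on a dense set of parameters:
\begin{itemize}
\item the prelimit representation needs $u+v+2\notin\Z_{\le0}$, and $u-v\notin\Z$ when $u,v\le0$; otherwise $(A_n,B_n,C_n,D_n)\notin\Omega_q$;
\item the limits, the uniform bounds and Theorem~\ref{thm:AC} need $(u,v)\in\calD$ and $u-s_k/2\notin\Z_{\le0}$.
\end{itemize}
The theorem nevertheless covers, e.g., $u\in\Z_{\le0}$, $u+v=-2$, or $u,v<0$ with $u-v\in\Z$.

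``Continuity of both sides plus a uniform-in-parameters version of the limits'' does not close this. Continuity of the limit is fine, by Proposition~\ref{prop:analytic RN} and $\wt K_{u,v}>0$. For fixed $n$ the prelimit transform is also continuous in $(u,v)$. But interchanging $n\to\infty$ with $(u',v')\to(u,v)$ requires convergence that is uniform on a neighbourhood of the bad point, and a term-by-term argument cannot deliver that. For instance, as $u-v$ approaches an integer with $u,v<0$, the masses of the $u$- and $v$-atoms of the denominator diverge with opposite signs, and only their sum stays bounded:
\begin{itemize}
\item in the prelimit, the divergence comes through Pochhammer factors in $A_n/C_n$ or $C_n/A_n$ in the denominators of \eqref{eq:AW atoms 0}--\eqref{eq:AW atoms j};
\item in the limit, it is exactly the $\pm e^{u^2}/(2\epsilon)$ example of Remark~\ref{rem:D approx}.
\end{itemize}
At the bad point itself the representation does not even exist. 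Remark~\ref{rem:D approx} concerns only the limiting objects and gives no uniform control on the prelimit.

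The missing idea is monotonicity, which is how the paper handles this (Proposition~\ref{prop:sandwich}). Use stochastic monotonicity of open ASEP stationary measures in the boundary parameters \citep[Lemma~4.12]{wang25asymmetric}. Take good parameters $u_m^-<u<u_m^+$ and $v_m^-<v<v_m^+$ converging to $(u,v)$. Then one can couple $h_n^{(u_m^-,v_m^+)}\le h_n^{(u,v)}\le h_n^{(u_m^+,v_m^-)}$. Since every $c_k>0$, the Laplace transforms are ordered the other way. Letting $n\to\infty$ and then $m\to\infty$, using only continuity of the limit, gives the claim with no uniformity in parameters needed.

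A smaller point concerns $\Phi^{(n),-}$ and continuous coordinates beyond $2n$. Decay of $((1+x)/2)^{n\Delta x}$ alone is not enough there, because the measures are signed; you must also show that their total variation grows at most subexponentially. The paper does this in two ways:
\begin{itemize}
\item for the marginal density on $(2n,4n)$, via the product bound $\log R_{q,1}\le C\sqrt n$;
\item for kernels started at an atom, via constant sign of the continuous part together with total mass one and convergent atomic masses.
\end{itemize}
Finally, when $v<0$ the denominator does carry $A_n$-atoms, so ``no $A_n$-atoms'' holds only for the numerator.
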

For the result above, the fan region $u+v>0$ and the boundary case $u+v=0$ have been established by \citet{corwin24stationary}. Our main contribution is the computation of the finite-dimensional Laplace transform for the shock region.

\begin{remark}
    For $q\in(0,1)$, the constraint $B=D=-q$ is equivalent to Liggett's condition \citep{liggett75ergodic}, which imposes $\alpha+\gamma/q = 1$ and $\beta + \delta/q = 1$. We expect that the convergence of the finite-dimensional Laplace transforms continues to hold under weaker assumptions (see related works by \citet{himwich25convergence} and \citet{yang25KPZ} that do not require Liggett's condition). We do not pursue this generalization. 
\end{remark}

The rest of this section is devoted to the proof of Theorem \ref{thm:main}. For this purpose, we first recall the connection between Askey--Wilson signed measures and the open ASEP in Section \ref{s:prelim}. In particular, a new path decomposition of the left-hand side in \eqref{eq:fdd conv} is established in Proposition \ref{prop:openWASEP path} in Section \ref{sec:path prelimit}. The asymptotics of the involved Askey--Wilson signed measures are established and related to those of the auxiliary measures in Section \ref{sec:limiting measures} in Section \ref{sec:AWasym}, and limiting finite-dimensional Laplace transform is computed in Sections \ref{sec:denomconv} and \ref{sec:numconv}. In fact, we compute directly the limiting finite-dimensional Laplace transform for a restrictive set of $u,v$, but then apply a standard sandwich argument to all $u,v\in\R$ (Proposition \ref{prop:sandwich}).

\subsection{Askey--Wilson signed measures and the open ASEP}\label{s:prelim}
We review the Askey--Wilson representation introduced by \citet{bryc17asymmetric}. The original representation was for the fan region, and the extension to the shock region was due to \citet{wang24askey}. 

The Askey--Wilson representation of the stationary measure of the open ASEP originates from the matrix ansatz of \citet{derrida93exact}. The matrix ansatz provides an explicit formula of the probability generating function of the stationary measure involving infinite-dimensional matrices and vectors. 
By choosing a specific family of matrices and vectors, \citet{sasamoto99one} related the stationary measure of the open ASEP with $\gamma=\delta=0$ to the Al-Salam--Chihara orthogonal polynomials, a specialization of the Askey--Wilson orthogonal polynomials. Later on, \citet{uchiyama04asymmetric} extended the connection by allowing $\gamma>0$ and $\delta>0$ to the entire Askey--Wilson orthogonal polynomials. More recently, \citet{bryc17asymmetric} provided a representation for the stationary measure in the fan region using the Askey--Wilson process introduced by the same authors \citep{bryc10askey}. This time, the probability generating function is expressed as the ratio of two expectations of certain functionals of the Askey--Wilson process, and this representation makes it possible to compute the second-order fluctuations through finite-dimensional Laplace transforms.  The extension of the representation to the shock region and hence the entire phase diagram was due most recently to  \citet{wang24askey}.

Throughout this section, we will rely on the following assumptions about the parameters of the open ASEP model.
\begin{assumption}\label{assump:Omega}
For some $q\in(0,1)$, suppose that $a,b,c,d\in\C$ satisfy the following conditions:
    \begin{enumerate}[(i)]
        \item $abcd \notin \{q^{-\ell}:\ell\in\Z_{\ge0}\}$,
        \item For any two distinct $\mathfrak{e},\mathfrak{f}\in\{a,b,c,d\}$ such that $|\mathfrak{e}|,|\mathfrak{f}|\geq 1$, we have $\mathfrak{e}/\mathfrak{f}\notin\{q^{\ell}:\ell \in\Z\}$,
        \item $a,b$ are real, and $c,d$ are either real or form a complex conjugate pair, with $ab<1$ and $cd<1$. 
    \end{enumerate}
    In this case, we write $(a,b,c,d)\in\Omega_q$.
\end{assumption}
If $(a,b,c,d)\in\Omega_q$, it is known that the Askey--Wilson signed measure with parameters $(a,b,c,d,q)$ exists \citep[Theorem 2.7]{wang24askey} and we let $\nu(\d y;a,b,c,d,q)$ denote this measure. 
This measure has an absolutely continuous part supported on $[-1,1]$ and possibly finitely many atoms outside $[-1,1]$. More precisely, we have
    \begin{align*}
        \nu(\d y;a,b,c,d,q) := f(y;a,b,c,d,q)\d y + \sum_{z\in F(a,b,c,d,q)}p(z;a,b,c,d,q)\delta_{z}(\d y),
    \end{align*}
    where 
    \[
        f(y;a,b,c,d,q) := \frac{(q,ab,ac,ad,bc,bd,cd;q)_{\infty}}{2\pi (abcd;q)_{\infty}\sqrt{1-y^{2}}}\bigg|\frac{\left(e^{2\i\theta_{y}};q\right)_{\infty}}{\left(ae^{\i\theta_{y}},be^{-\i\theta_{y}},ce^{\i\theta_{y}},de^{\i\theta_{y}};q\right)_{\infty}} \bigg|^{2}\mathbf{1}_{\{|y|<1\}},
\]
 is the density over $[-1,1]$, where $\theta_y:=\arccos(y)\in[0,\pi]$. Here and below, we  use the $q$-Pochhammer symbols $(a;q)_k:=\prod_{j=0}^{k-1}(1-aq^j)$ for $k\in\mathbb Z_{\geq0}$ and  $(a;q)_\infty:=\prod_{j=0}^{\infty}(1-aq^j)$. For $k\in\mathbb Z_{\geq0}\cup\{\infty\}$, we write $(a_1,\ldots,a_r;q)_k:=\prod_{\ell=1}^r(a_\ell;q)_k$. The collection of candidate atomic locations is
 \[
 F(a,b,c,d,q):=\bigcup_{\mathfrak e\in\{a,b,c,d\}}\ccbb{\rho(\mathfrak eq^j)}_{j\in\Z_{\ge0}, |\mathfrak e|q^j>1} \qmwith \rho(x) := \frac12\pp{x+\frac1x}.
 \]
 Note that the atoms are classified by $\mathfrak e\in\{a,b,c,d\}$, and the convention is, for each $\mathfrak e\in\{a,b,c,d\}$, to refer to $\{\rho(\mathfrak eq^j)\}_{j\in\Z_{\ge 0}, |\mathfrak e|q^j>1}$ as the locations of candidate atoms associated to the parameter $\mathfrak e$. The formulas concerning us the most are those for the atoms associated to the parameter $c$. 
For $abcd\neq0$ and $|c|>1$, the mass function $p$ is
\begin{align}
    p\left(\rho(c);a,b,c,d,q\right) & = \frac{\left(c^{-2},ab,bd,ad;q\right)_{\infty}}{\left(b/c,a/c,d/c,abcd;q\right)_{\infty}}, \label{eq:AW atoms 0}\\
    p\left(\rho(cq^j);a,b,c,d,q\right) & = p(\rho(c);a,b,c,d,q)\frac{\left(c^{2},ac,bc,cd;q\right)_{j}\left(1-c^{2}q^{2j}\right)}{\left(q,qc/a,qc/b,qc/d;q\right)_{j}(1-c^{2})}\left(\frac{q}{abcd}\right)^{j},\label{eq:AW atoms j}
\end{align} for $j\in\Z_{\ge 1}, |c|q^j>1$. 
Some of the masses in \eqref{eq:AW atoms 0} and
\eqref{eq:AW atoms j} may vanish (and hence the notion of `candidate'). Therefore,
\[
\operatorname{supp}\nu(\cdot;a,b,c,d,q) \subseteq [-1,1]\cup F(a,b,c,d,q).
\] 
The mass formulas for the candidate atomic locations associated with $\mathfrak e\in\{a,b,d\}$ are obtained by swapping $c$ and $\mathfrak e$ in the formulas above. 

Next, we explain how the stationary measure of the open ASEP is related to Askey--Wilson signed measures.  For all $t>0$ such that 
        \equh\label{eq:Omega t}
        \left(A\sqrt{t},B\sqrt{t},Ct^{-1/2},Dt^{-1/2}\right)\in\Omega_{q},
        \eque
        we set \[
        \pi_{t}(\d y)\equiv \pi_{t}^{(A,B,C,D,q)}(\d y) :=\nu\left(\d y; A\sqrt{t},B\sqrt{t},Ct^{-1/2},Dt^{-1/2},q\right).
        \]
        We note that $\pi_t$ is supported on
        \[
        \mathbb{U}_{t}\equiv \mathbb{U}_{t}(A,B,C,D,q) := [-1,1]\cup F\left(A\sqrt{t},B\sqrt{t}, Ct^{-1/2},Dt^{-1/2},q\right).
        \]
 For all $s<t$ and $x\in\mathbb{U}_{s}$ such that 
\equh\label{eq:Omega st}
        \left(A\sqrt{t}, B\sqrt{t}, \sqrt{\frac{s}{t}}\left(x+\sqrt{x^{2}-1}\right), \sqrt{\frac{s}{t}}\left(x-\sqrt{x^{2}-1}\right)\right)\in \Omega_{q},
\eque
    we set 
\begin{align*}
    P_{s,t}(x,\d y) & \equiv P_{s,t}^{(A,B,C,D,q)}(x,\d y) 
    \\ & := \nu\left(\d y; A\sqrt{t}, B\sqrt{t}, \sqrt{\frac{s}{t}}\left(x+\sqrt{x^{2}-1}\right), \sqrt{\frac{s}{t}}\left(x-\sqrt{x^{2}-1}\right),q\right),
\end{align*} and $P_{s,t}(x,\d y):=0$ when $x\notin \mathbb{U}_{s}$.  When $|x|<1$ we understand $x\pm\sqrt{x^{2}-1} = e^{\pm i\theta_{x}}$ for $\theta_x$ satisfying $\cos{(\theta_{x})}=x$. For $s=t, x\in\mathbb U_s$ we simply set $P_{s,s}(x,\d y) = \delta_x(\d y)$.  For $0<t_1\le\cdots\le t_d$ such that all the measures below are well-defined, we introduce
\begin{align*}
    \pi_{t_1,\dots,t_{d}}\topp{A,B,C,D,q}\left(\d y_{1},...,\d y_{d}\right) & := \pi_{t_{1}}\topp{A,B,C,D,q}(\d y_{1})P\topp{A,B,C,D,q}_{t_{1},t_{2}}(y_{1},\d y_{2})\cdots P\topp{A,B,C,D,q}_{t_{d-1},t_{d}}(y_{d-1},\d y_{d}).
\end{align*} 
With this notation, we have the following Askey--Wilson representation of the probability generating function of the stationary measure of the open ASEP. 
\begin{theorem}\label{thm:AW rep}
Consider an open ASEP with parameters $A,B,C,D,q$ and let $(\tau_{n,1},\dots,\tau_{n,n})$ be a random vector following its stationary measure. 
Suppose that $(A,B,C,D)\in\Omega_q$. Then, for $0<t_1\le\cdots\le t_n$ such that 
\equh\label{eq:t}
|A\sqrt {t_n}|<1, |B\sqrt{t_n}|<1,
\eque
 we have
\equh\label{eq:generating function}
        \mathbb{E}\pp{\prod_{i=1}^nt_{i}^{\tau_{n,i}}}  = \frac{\int_{\mathbb{R}^{n}}\prod_{i=1}^{n}(1+t_{i}+2\sqrt{t_{i}}y_{i})\pi\topp{A,B,C,D,q}_{t_{1},...,t_{n}}(\d y_{1},...,\d y_{n})}{2^n\int_\R (1+y)^n\pi_1\topp{A,B,C,D,q}(\d y)}.
\eque
\end{theorem}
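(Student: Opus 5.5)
The plan is to prove the identity first in the fan region, where it is the theorem of \citet{bryc17asymmetric}, and then to extend it to the shock region by analytic continuation in the boundary parameters. The extension works because both sides of \eqref{eq:generating function} depend on $(A,B,C,D)$ only through finitely many algebraic quantities. On the left, the stationary measure of an irreducible finite-state Markov chain is a rational function of the jump rates, by the matrix-tree theorem. Since $\alpha,\beta,\gamma,\delta$ are rational in $(A,B,C,D,q)$, the left-hand side is rational in $(A,B,C,D)$ for each fixed $\vv t=(t_1,\dots,t_n)$. For the right-hand side, I would show that every integral of a polynomial against $\pi_{t_1,\dots,t_n}$ is an explicit rational expression in the parameters, and that this expression is the same in the fan and in the shock region.

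\emph{Step 1: structural properties of the signed measures.} The first step is to record the properties of the Askey--Wilson signed measures $\nu(\cdot;a,b,c,d,q)$ for $(a,b,c,d)\in\Omega_q$ that hold for probability measures, taken from \citep[Theorem 2.7]{wang24askey}.
\begin{enumerate}[(a)]
\item The total mass of $\nu$ is one, and the Askey--Wilson polynomials $w_m(y;a,b,c,d)$ are orthogonal with respect to $\nu$, with the same norms as in the positive case. Hence every moment $\int y^m\nu(\d y)$ is a polynomial in $a,b,c,d$, since it is determined by the three-term recurrence.
\item The martingale-type projection formula holds: under \eqref{eq:Omega t} and \eqref{eq:Omega st},
\[
\int p_m(y;t)\,P_{s,t}(x,\d y)=p_m(x;s),
\qquad
\int \pi_s(\d x)\,P_{s,t}(x,\d y)=\pi_t(\d y).
\]
Here $p_m(\cdot;t)$ is the suitably rescaled Askey--Wilson polynomial with parameters $(A\sqrt t,B\sqrt t,C/\sqrt t,D/\sqrt t)$.
\end{enumerate}
Both properties are identities between finite sums of residue-type atoms and a contour integral. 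I would verify them by the standard $q$-contour-integral argument: deform the contour of the Askey--Wilson integral past the poles $\mathfrak e q^j$ with $|\mathfrak e|q^j>1$, and check that the residues picked up are exactly the atomic masses in \eqref{eq:AW atoms 0}--\eqref{eq:AW atoms j}. Condition \eqref{eq:t}, namely $|A\sqrt{t_n}|,|B\sqrt{t_n}|<1$, guarantees that the kernels $P_{t_{k},t_{k+1}}$ have no atoms associated with $A,B$. So the iterated measure is well defined, and only atoms from the $C,D$ side and from $x\pm\sqrt{x^2-1}$ propagate.

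\emph{Step 2: reduction to the matrix ansatz.} Using (a) and (b), the numerator $\int\prod_i(1+t_i+2\sqrt{t_i}y_i)\,\pi_{t_1,\dots,t_n}(\d\vv y)$ can be computed by expanding each factor in the basis $p_m(\cdot;t_i)$, applying the three-term recurrence, and integrating out from $y_n$ down to $y_1$. The result is exactly the matrix element $\langle W|\prod_i(\mathsf E+t_i\mathsf D)|V\rangle$ of the Uchiyama--Sasamoto--Wadati tridiagonal representation of the DEHP algebra. This is the same algebraic computation as in \citet{bryc17asymmetric}, and it uses only (a) and (b), not positivity. Dividing by the analogous expression at $t_i\equiv1$, which matches the denominator $2^n\int(1+y)^n\pi_1(\d y)$, and invoking the matrix ansatz of \citet{derrida93exact} then yields \eqref{eq:generating function}, provided the denominator does not vanish.

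\emph{Fallback.} Alternatively, one can bypass the explicit matrix-ansatz bookkeeping. By (a) and (b), both integrals in \eqref{eq:generating function} are rational functions of $(A,B,C,D)$ given by a single formula valid on the connected parameter set. The identity holds on the fan region, which is an open set, so the identity theorem extends it to all of $\Omega_q$ satisfying \eqref{eq:t}. The main obstacle is Step 1(b) for signed measures. One has to check that the atoms created at time $s$ (from $C/\sqrt s$, $D/\sqrt s$, or from $x\pm\sqrt{x^2-1}$ when $x$ is itself an atom) are transported by $P_{s,t}$ consistently with the marginals $\pi_t$, and that no parameter coincidence excluded by Assumption~\ref{assump:Omega}(ii) creates a double pole. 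I would handle this by working first with generic complex parameters, where all poles are simple and the residue bookkeeping is mechanical, and then removing the genericity by continuity of both sides within $\Omega_q$.
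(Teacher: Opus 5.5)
Your main route (Steps 1--2) is essentially the paper's. The paper also uses the orthogonality and projection properties of the Askey--Wilson signed measures from \citet{wang24askey}, feeds them into the matrix-ansatz computation of \citet{bryc17asymmetric} with the Uchiyama--Sasamoto--Wadati representation, and treats the denominator by taking $t_1=\cdots=t_n=1$; your fallback adds nothing, since it already presupposes Step 1. The paper cites \citet[Section~3]{wang24askey} for your Step 1 rather than redoing the residue bookkeeping, and its only real work is an admissibility check you merely gesture at: \eqref{eq:t} gives \eqref{eq:Omega t} for every $0<r\le t_n$ (as $|A\sqrt r|,|B\sqrt r|<1$ and $C/\sqrt r\ge 0\ge D/\sqrt r$), and \eqref{eq:Omega st} for every $x\in\mathbb U_s$ (the last two parameters have product $s/t<1$). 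Also, the moments are rational in $(a,b,c,d)$, with denominators $1-abcdq^k$, not polynomial; this is harmless for your argument.
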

\begin{proof}
The representation was established in \citet[Theorem~1.1]{wang24askey}
for times in a suitable interval near $1$. Subsequently,
\citet[Lemma~2.2 and Theorem~2.3]{wang25asymmetric} applied the same
argument on a more general admissible set of times. Here, the constraint is further relaxed, although the proof remains the same and hence we only provide a sketch. 
We verify admissibility under \eqref{eq:t}. For every $0<r\le t_n$, the parameters $A\sqrt r$ and $B\sqrt r$ have modulus less than $1$, while
$C/\sqrt r\ge0$ and $D/\sqrt r\le0$. Hence \eqref{eq:Omega t} holds. For $0<s<t\le t_n$, the last two parameters in \eqref{eq:Omega st} have product $s/t<1$; since the first two have modulus less than $1$, \eqref{eq:Omega st} holds for every $x\in\mathbb U_s$. The orthogonality and projection argument of \citet[Section~3]{wang24askey} identifies the numerator in \eqref{eq:generating function} with the unnormalized stationary generating function. The denominator can be dealt with similarly (by taking $t_1=\cdots=t_n = 1$). Their ratio gives \eqref{eq:generating function}.
\end{proof}

\subsection{A path decomposition for the integral representation of open WASEP}\label{sec:path prelimit}
Recall that we are interested in the sequence of stationary measures with parameters satisfying \eqref{eq:parameters}. We shall write explicitly the integral representation in Theorem \ref{thm:AW rep}. In particular, the numerator this time has again a path decomposition that facilitates our analysis later.  We write
\[
\pi_t^{(n)} :=\pi_t^{(A_n,B_n,C_n,D_n,q_n)} \qmand  P_{s,t}^{(n)} :=P_{s,t}^{(A_n,B_n,C_n,D_n,q_n)},
\]
and use the same superscript convention for the multipoint measures $\pi_{t_1,\ldots,t_d}^{(n)}$ whenever they are well-defined.

We begin by re-writing the finite-dimensional Laplace transform. Fix $d\in\Z_{>0}$, $0=x_0<x_1<\cdots<x_d=1$, and $c_1,\dots,c_d>0$. For $k\in\llbracket d\rrbracket$ set $s_k:=\sum_{j=k}^d c_j$ and
for $k\in\{0,\dots,d\}$ set $n_k:=\lfloor nx_k\rfloor$. We consider 
\[
\varphi_{n,\vv x}\pp{\frac{\vv c}{\sqrt n}} = \esp\exp\pp{-\summ k1d c_kh_n\topp{u,v}(x_k)} 
= \esp\exp\pp{-\frac{1}{\sqrt{n}}\summ k1d c_k\summ j1{\floor{nx_k}}(2\tau_{n,j}-1)}.
\]
Under suitable assumptions on $u,v,\vv c$ to be specified later,  Theorem \ref{thm:AW rep} gives
\equh\label{eq:varphi}
\varphi_{n,\vv x}\pp{\frac{\vv c}{\sqrt n}} = \frac{\Phi\topp n(\vv x, \vv c)}{Z_n}
\eque
with
\equh\label{eq:0}
\Phi\topp n(\vv x,\vv c) := 2^{-n}\int_{\R^d} \prodd k1d\pp{\cosh\pp{\frac{s_k}{\sqrt n}}+y_k}^{n_k-n_{k-1}}\pi\topp n_{e^{-2s_1/\sqrt n},\dots,e^{-2s_d/\sqrt n}}(\d\vvy),
\eque
where $\d\vvy = \d y_1\dots\d y_d$, and 
\equh\label{eq:1}
Z_n := 2^{-n}\int_\R(1+y)^n \pi_1\topp n(\d y). 
\eque
We do not write the dependence of $q$ on $n$ for notational convenience from now on. We first check that $(A_n,B_n,C_n,D_n)\in \Omega_{q}$ under the assumption
\[u+v+2\notin\Z_{\le 0}\qmand u-v\notin\Z \mbox{ if } u,v\le 0.
\]
 The condition (i) of Assumption \ref{assump:Omega} is equivalent to $u+v+2\notin\Z_{\le0}. $ Since $B_n=D_n=-q$ have modulus less than one, only $A_n=q^v$ or $C_n=q^u$ concern the condition (ii). When $u,v\leq0$, this condition requires $A_n/C_n=q^{v-u}$ not to be an integer power of $q$, or equivalently  $u-v\notin\Z$. Condition (iii) of Assumption \ref{assump:Omega} holds automatically.  In particular, $\pi\topp n_1$ is well-defined.

Next, for the numerator, we deal with 
\[
\pi\topp n_{e^{-2s_1/\sqrt n},\dots,e^{-2s_d/\sqrt n}}(\d\vvy) = \pi\topp{A_n,B_n,C_n,D_n,q}_{e^{-2s_1/\sqrt n},\dots,e^{-2s_d/\sqrt n}}(\d\vvy).
\]
We shall not only exclude the values $u,v,s_j$ for which the relevant measures are not well-defined, but further restrict the $s_j$'s  so that all candidate atomic locations near 1, if any, are associated with the parameter $C_n$ but {\em not} $A_n$, and this shall simplify significantly our calculations in the sequel. For this purpose we impose
\[
-c_d<\min\{0,2v\}.
\]
 For every $j$, we then have
\begin{align*}
A_nq^{s_j/2} =q^{v+s_j/2}<1, & &  |B_nq^{s_j/2}|=q^{1+s_j/2}<1, & &  C_nq^{-s_j/2}=q^{u-s_j/2}.
\end{align*}
Note that in the shock region, we necessarily have $C_nq^{-s_j/2}>1$. 
 For any $j$ such that $s_j>2$, there are also candidate atomic locations associated with $D_nq^{-s_j/2}=-q^{1-s_j/2}$, and their locations approach $-1$ as $n\to\infty$. 

It turns out that we have a path decomposition of the integral of the numerator on the right-hand side of \eqref{eq:generating function}. To simplify the notation, write
\[
 \wt s_{n,i} := e^{-2s_i/\sqrt n}, i=1,\dots,d,
\]
and we examine support of $P\topp n_{\wt s_{n,i},\wt s_{n,i+1}}(x,\cdot)$ for $i=1,\dots,d-1$. For each $i=1,\dots, d$, consider the candidate positive and negative atoms 
\begin{align}
\mathsf u\topp n_{\wt s_{n,i},j} &:=\rho\pp{q^{u+j}\wt s_{n,i}^{-1/2}} = \rho\pp{q^{u+j-s_i/2}}, j=0,\dots,j_{u,-s_i}:=\ceil{-u+\frac {s_i}2-1},
\nonumber\\
\mathsf n\topp n_{\wt s_{n,i},j} & :=\rho\pp{-q^{j+1}\wt s_{n,i}^{-1/2}} = \rho\pp{-q^{j+1-s_i/2}}, j=0,\dots,\ell_{-s_i}:=\ceil{\frac{s_i}2-2}.
\label{eq:mathfrak d}
\end{align}
The positive atoms $(\mathsf u\topp n_{\wt s_{n,i},j})_{j=0,\dots,j_{u,-s_i}}$ are in correspondence to  the atoms $(u_{-s_i,j})_{j=0,\dots,j_{u,-s_i}}$ in the path decomposition of the finite-dimensional Laplace transform studied in Section \ref{sec:limiting measures}. In particular, the definition of $j_{u,t}$ here is the same as the one introduced earlier in \eqref{eq:j_u,t}. It is worth noting that  $\mathsf u\topp n_{\wt s_{n,i},j}$ does not converge to $u_{-s_i,j}$ in \eqref{eq:j_u,t}, but its transformation $u\topp n_{-s_i,j}$ introduced later does. 
Set
\begin{align*}
F_{\wt s_{n,i}}^{(n),+}&=\ccbb{\mathsf u\topp n_{\wt s_{n,i},j}}_{j\in\Z_{\ge 0}, q^{u+j}\wt s_{n,i}^{-1/2}>1} = \ccbb{\mathsf u\topp n_{\wt s_{n,i},j}}_{j=0,\dots,j_{u,-s_i}},\\
F_{\wt s_{n,i}}^{(n),-}&=\ccbb{\mathsf n\topp n_{\wt s_{n,i},j}}_{j\in\Z_{\ge 0}, q^{1+j}\wt s_{n,i}^{-1/2}>1} = 
\ccbb{\mathsf n\topp n_{\wt s_{n,i},j}}_{j=0,\dots,\ell_{-s_i}}.
\end{align*}
Thus, 
  $\supp(\pi\topp n_{\wt s_{n,i}})\subset\mathbb U^{(n)}_{\wt s_{n,i}} :=[-1,1]\cup F^{(n),+}_{\wt s_{n,i}}\cup F^{(n),-}_{\wt s_{n,i}}$.
  (Recall that the weight of a candidate atom here might be zero and hence the inclusion here might be strict.)
   For $x\in \mathbb U\topp n_{\wt s_{n,i}}$, $i=1,\dots,d-1$, one readily verifies that
\begin{align*}
(\mathsf a,\mathsf b,\mathsf c,\mathsf d) & \equiv \pp{A_n q^{s_{i+1}/2}, B_n q^{s_{i+1}/2}, q^{c_i/2}\pp{x+\sqrt{x^2-1}},q^{c_i/2}\pp{x-\sqrt{x^2-1}}}
\\ & = \pp{q^{v+s_{i+1}/2}, -q^{1+s_{i+1}/2}, q^{c_i/2}\pp{x+\sqrt{x^2-1}},q^{c_i/2}\pp{x-\sqrt{x^2-1}}}\in\Omega_q.
\end{align*}
By a careful examination of the formulas of Askey--Wilson signed measures we have the following observations. 
\begin{enumerate}[(i)]
\item Assume $x\in[-1,1]$. Then $|\mathsf a|=q^{v+s_{i+1}/2}<1$, $|\mathsf b|=q^{1+s_{i+1}/2}<1$, and $|\mathsf c|=|\mathsf d|=q^{c_i/2}<1$. In this case, none of the four parameters is associated to any atoms. That  is, 
\[
\mathrm{supp}\pp{P^{(n)}_{\wt s_{n,i},\wt s_{n,i+1}}(x,\cdot)} \subseteq[-1,1], \mfa x\in[-1,1].
\]

That is, once the path is in the interval $[-1,1]$, it remains in that interval for the rest of the time (until time $\wt s_{n,d}$). 

\item  Assume that $x = \mathsf u\topp n_{\wt s_{n,i},j}$. 
The third and fourth parameters of the transition measure are $q^{j+u-s_{i+1}/2}$ and $q^{s_i-s_{i+1}/2-j-u}$. It follows that 
the only parameter that may have modulus greater than or equal to one is the third parameter $q^{j+u-s_{i+1}/2}$. That is, 
\[
\mathrm{supp}\pp{P^{(n)}_{\wt s_{n,i},\wt s_{n,i+1}}\pp{\mathsf u\topp n_{\wt s_{n,i},j},\cdot}}  \subset [-1,1]\cup \ccbb{\mathsf u\topp n_{\wt s_{n,i+1},j'}}_{j'=j,\dots,j_{u,-s_{i+1}}}. 
\] 
That is, the $j$-th positive atom at time $e^{-2s_i/\sqrt n}$ jumps at time $e^{-2s_{i+1}/\sqrt n}$ only either to $[-1,1]$ or to the $j'$-th positive atom for certain admissible $j'\ge j$.

\item Assume that $x=\mathsf n\topp n_{\wt s_{n,i},j}$ (so necessarily $s_i>2$ and $j+1-s_i/2<0$). 

It is straightforward to check that the third and fourth parameters appearing in \eqref{eq:Omega st}
are, up to interchange, $-q^{j+1-s_{i+1}/2}$ and $-q^{s_i-s_{i+1}/2-j-1}$. The only possible parameter that has modulus greater than or equal to one is $-q^{j+1-s_{i+1}/2}$. It follows that 
\[
\mathrm{supp}\pp{P^{(n)}_{\wt s_{n,i},\wt s_{n,i+1}}(\mathsf n\topp n_{\wt s_{n,i},j},\cdot)}\subset [-1,1]\cup \ccbb{\mathsf n\topp n_{\wt s_{n,i+1},j'}}_{j'=j,\dots,\ell_{-s_{i+1}}}.
\]

That is, the $j$-th negative atom at time $e^{-2s_i/\sqrt n}$ jumps at time $e^{-2s_{i+1}/\sqrt n}$ only either to $[-1,1]$ or to the $j'$-th negative atom for certain admissible $j'\ge j$.

\end{enumerate}

Introduce
\[
\Omega^{(d,n)}_\emptyset:= (-1,1)^d, 
\]
and for $\ell=1,\dots,d$, 
\begin{align}
\Omega_{(r_1,\dots,r_\ell)}^{(d,n),+} &:= \ccbb{\mathsf u_{\wt s_{n,1},r_1}\topp n}\times\cdots\times \ccbb{\mathsf u_{\wt s_{n,\ell},r_\ell}\topp n}\times(-1,1)^{d-\ell},\quad 0\le r_1\le\cdots \le r_\ell\le j_{u,-s_\ell}.\nonumber\\
\Omega_{(r_1,\dots,r_\ell)}^{(d,n),-} &:= \ccbb{\mathsf n_{\wt s_{n,1},r_1}\topp n}\times\cdots\times \ccbb{\mathsf n_{\wt s_{n,\ell},r_\ell}\topp n}\times(-1,1)^{d-\ell},\quad 0\le r_1\le\cdots \le r_\ell\le \ell_{-s_\ell}.\nonumber
\end{align}

In summary, we have shown the following.

\begin{proposition}\label{prop:good t}
Assume $u,v\in\R$, $u+v+2\notin\Z_{\le0}$, and in addition \(u-v\notin\Z\) if \(u,v\leq0\). Assume further that $-s_1<\cdots<-s_d<\min\{0,2v\}$. Then, for every $n\in\mathbb Z_{>0}$, the representations
\eqref{eq:varphi}, \eqref{eq:0}, and \eqref{eq:1} are valid; that is,  all the Askey--Wilson signed measures involved in the definition of $\pi\topp n_1$ and $\pi\topp n_{e^{-2s_1/\sqrt n},\dots,e^{-2s_d/\sqrt n}}$ are well-defined. 

Furthermore, for every such $n$ and every bounded Borel-measurable function $f:\mathbb R^d\to\mathbb C$, we have
\begin{align*}
\int_{\R^d}f(\vvy)&\pi_{e^{-2s_1/\sqrt n},\dots,e^{-2s_d/\sqrt n}}\topp n(\d\vvy)\\
&  = \int_{\Omega^{(d,n)}_\emptyset}f(\vvy)
\pi_{e^{-2s_1/\sqrt n},\dots,e^{-2s_d/\sqrt n}}\topp n(\d\vvy) \\
&  \quad+ \summ \ell1d\sum_{0\le r_1\le\cdots\le r_\ell\le j_{u,-s_\ell}}\int_{\Omega^{(d,n),+}_{(r_1,\dots,r_\ell)}}f(\vvy)
\pi_{e^{-2s_1/\sqrt n},\dots,e^{-2s_d/\sqrt n}}\topp n(\d\vvy) \\
& \quad +
\summ\ell1d
\sum_{0\le r_1\le\cdots\le r_\ell\le \ell_{-s_\ell}}
\int_{\Omega^{(d,n),-}_{(r_1,\dots,r_\ell)}}f(\vvy)
\pi_{e^{-2s_1/\sqrt n},\dots,e^{-2s_d/\sqrt n}}\topp n(\d\vvy).
\end{align*}
\end{proposition}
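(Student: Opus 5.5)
The proof has two parts. First we check that every Askey--Wilson signed measure entering $\pi_1^{(n)}$ and $\pi^{(n)}_{\wt s_{n,1},\dots,\wt s_{n,d}}$ is well-defined, so that Theorem \ref{thm:AW rep} applies. Then we decompose the multipoint measure by tracking the supports of the transition kernels, in the same way as in the proof of Proposition \ref{prop:path decomp}.

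For the first part, the discussion preceding the proposition already shows that $(A_n,B_n,C_n,D_n)\in\Omega_q$ under $u+v+2\notin\Z_{\le0}$ and $u-v\notin\Z$ when $u,v\le0$. This gives $\pi_1^{(n)}$ and the denominator \eqref{eq:1}. For the numerator we take $t_k=\wt s_{n,k}=q^{s_k}$. The hypothesis $-s_d<\min\{0,2v\}$ gives $|A_n\sqrt{t_d}|=q^{v+s_d/2}<1$ and $|B_n\sqrt{t_d}|=q^{1+s_d/2}<1$, so \eqref{eq:t} holds. As in the proof of Theorem \ref{thm:AW rep}, this yields \eqref{eq:Omega t} at every $t_k$ and \eqref{eq:Omega st} for every $x\in\mathbb U^{(n)}_{t_k}$. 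The point is that the last two parameters have product $s/t<1$, and the first two have modulus less than one, so condition (ii) of Assumption \ref{assump:Omega} only needs to be checked among the parameters of modulus at least one. In each of the cases (i)--(iii) listed above there is at most one such parameter, so (ii) holds trivially. Theorem \ref{thm:AW rep} uses $n$ time points. We group them into the $d$ blocks $t_i=\wt s_{n,k}$ for $n_{k-1}<i\le n_k$, using $P_{s,s}=\delta$. The product $\prod_i(1+t_i+2\sqrt{t_i}y_i)$ then collapses to $\prod_k\bigl(2\sqrt{t_k}\,(\cosh(s_k/\sqrt n)+y_k)\bigr)^{n_k-n_{k-1}}$. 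The factors $\sqrt{t_k}^{\,n_k-n_{k-1}}$ cancel against the exponential factor $e^{\sum_k(s_k/\sqrt n)(n_k-n_{k-1})}$ in $\varphi_n$. This cancellation is a short summation-by-parts check, and it gives \eqref{eq:varphi}--\eqref{eq:0}.

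For the decomposition, we write the integral of $f$ against $\pi_{t_1}(\d y_1)P_{t_1,t_2}(y_1,\d y_2)\cdots$ and partition the support of each coordinate into $[-1,1]$, $F^{(n),+}_{\wt s_{n,k}}$ and $F^{(n),-}_{\wt s_{n,k}}$. Observations (i)--(iii) say the following. A path in $[-1,1]$ stays in $[-1,1]$. A path at a positive atom $\mathsf u_{\cdot,j}$ moves either to $[-1,1]$ or to a positive atom with index $j'\ge j$. A path at a negative atom behaves the same way among the negative atoms. Hence every path charged by the measure has the form: an initial segment of length $\ell\ge0$ made entirely of positive atoms (or entirely of negative atoms) with nondecreasing indices, followed by $d-\ell$ coordinates in $[-1,1]$.

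It remains to show that the boundary points $\pm1$ carry no mass and that the pieces are disjoint. The absolutely continuous parts give no mass to $\pm1$. The atoms $\rho(\mathfrak e q^j)$ with $|\mathfrak e q^j|>1$ lie strictly outside $[-1,1]$, so we may replace $[-1,1]$ by $(-1,1)$. Positive atoms are greater than $1$ and negative atoms are less than $-1$, and $\rho$ is injective on $(1,\infty)$, so the sets $\Omega^{(d,n),\pm}_{\vvr}$ and $\Omega^{(d,n)}_\emptyset$ are pairwise disjoint. The index range $r_\ell\le j_{u,-s_\ell}$ follows because the candidate index sets are nonincreasing in $k$ (the $s_k$ decrease), so the monotone constraint only has to be checked at the last atomic step. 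Summing over the partition gives the stated identity; boundedness of $f$ and finiteness of the signed measures on each piece justify the splitting. The step requiring the most care is verifying observations (ii)--(iii), namely that from an atom only same-type atoms with larger index are charged. That amounts to computing the transition parameters $q^{j+u-s_{i+1}/2}$, $q^{s_i-s_{i+1}/2-j-u}$ and checking their moduli. The paper has essentially already done this, so I will cite it.
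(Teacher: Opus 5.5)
Your proposal is correct and follows the same route as the paper. You check $(A_n,B_n,C_n,D_n)\in\Omega_q$ and \eqref{eq:t} at $t=\wt s_{n,d}$, then read off from the case analysis (i)--(iii) that paths in $[-1,1]$ stay there and atoms only move to same-type atoms of larger index; your block grouping, the null boundary $\{\pm1\}$, the disjointness argument and the index bound $r_\ell\le j_{u,-s_\ell}$ are correct details that the paper leaves implicit. One point that neither you nor the paper spells out: for small $n$ some block $(n_{k-1},n_k]$ can be empty (the last block never is). Inserting the missing time $\wt s_{n,k}$ then needs the consistency relations $\int\pi_s(\d x)P_{s,t}(x,\cdot)=\pi_t$ and $\int P_{r,s}(x,\d y)P_{s,t}(y,\cdot)=P_{r,t}(x,\cdot)$ for the Askey--Wilson signed kernels, not just $P_{s,s}=\delta$; this is irrelevant for large $n$.
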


This leads to a path decomposition of $\Phi\topp n$ in correspondence to the one of $\Psi_{u,v}(\vvx,\vvc)$ in Proposition \ref{prop:path decomp}. To see this better, and also to facilitate the computation of the limiting Laplace transform later, we apply  a change of variables $y_k\mapsto2n(1-y_k)$ and a time-reparametrization and work  with
\begin{align}
\wt \pi_t\topp n(\d y) & := \pi_{e^{2t/\sqrt n}}\topp n\pp{\d\pp{1-\frac y{2n}}},\label{eq:wt pi}\\
\wt P_{s,t}\topp n(y,\d z) & := P\topp n_{e^{2s/\sqrt n},e^{2t/\sqrt n}}\pp{1-\frac y{2n},\d\pp{1-\frac z{2n}}},\nonumber 
\end{align}
and we will use the notation $\wt\pi_t^{(n),c}$ and $\wt\pi_t^{(n),d}$ for the density and mass functions of $\wt\pi_t\topp n$ (see \eqref{eq:wt pi^c} below), respectively. For the transition kernels, we use the notation $\wt P_{s,t}^{(n),c,c}$, $\wt P_{s,t}^{(n),d,c}$, and $\wt P_{s,t}^{(n),d,d}$ (see \eqref{eq:wt P^c,c}, \eqref{eq:wt P^d,c}, and \eqref{eq:wt P^d,d} below). 
In this way,  we write the denominator of interest as
 \[
Z_n := 2^{-n}\int_\R(1+y)^n \pi_1\topp n(\d y) =\int_\R\pp{1-\frac y{4n}}^n\wt\pi_0\topp n(\d y).
\]
For the numerator, we shall rewrite the above path decomposition in terms of the measures $\wt \pi\topp n$ and $\wt P\topp n$. 
Let $i\geq0$ and $j\in\llbracket0,d\rrbracket$ be such that
\equh\label{eq:i,j}
u\in\C_{(s_{j+1,i}/2,s_{j,i}/2)}.
\eque 
This is the same condition imposed in the path decomposition in 
Section~\ref{sec:path}. 
Apply the change of variables $y_k=2n(1-z_k)$ to \eqref{eq:0} and introduce
\[
W_n(\vvy)=\prod_{k=1}^d\pp{\frac{\cosh(s_k/\sqrt n)+1-y_k/(2n)}{2}}^{n_k-n_{k-1}}.
\]
Then,
\begin{align*}
\Phi\topp n&\equiv \Phi\topp n(\vvx,\vvc)  = 2^{-n}\int_{\R^d} \prodd k1d\pp{\cosh\pp{\frac{s_k}{\sqrt n}}+y_k}^{n_k-n_{k-1}}\pi\topp n_{e^{-2s_1/\sqrt n},\dots,e^{-2s_d/\sqrt n}}(\d y_1,
\dots,\d y_d),\\
&=
\int_{\R^d}W_n(\vvy)\wt\pi\topp n_{-s_1,\dots,-s_d}(\d \vvy). 
\end{align*}
For $\ell\in\llbracket d\rrbracket$ and an admissible
$\vvr=(r_1,\ldots,r_\ell)$, set
\[
\Phi_{\vvr}^{(n)}\equiv \Phi_\vvr\topp n(\vvx,\vvc) :=
\int_{\ccbb{u_{-s_1,r_1}^{(n)}}\times\cdots
\times\ccbb{u_{-s_\ell,r_\ell}^{(n)}}
\times(0,4n)^{d-\ell}}
W_n(\vvy)\,
\wt\pi_{-s_1,\ldots,-s_d}^{(n)}(\d\vvy),
\]
and similarly
\begin{align*}
\Phi_\emptyset \topp n\equiv \Phi_\emptyset \topp n(\vvx,\vvc)&:=\int_{(0,4n)^d}W_n(\vvy) 
\wt\pi_{-s_1,\ldots,-s_d}^{(n)}(\d\vvy),\\
\Phi^{(n),-} \equiv\Phi^{(n),-}(\vvx,\vvc)&:= \int_{\R^d\setminus(-\infty,4n)^d}W_n(\vvy)\wt\pi_{-s_1,\dots,-s_d}\topp n(\d\vvy).
\end{align*}
Recall $\mathbb S\topp i_\ell = \{(r_1,\dots,r_\ell)\in\Z_{\ge0}^\ell: r_1\le\cdots \le r_\ell\le i\}$ in \eqref{eq:S^(i)}.
Then, Proposition \ref{prop:good t} yields the following.

\begin{proposition}\label{prop:openWASEP path}
Assume $u,v\in\R$, $u+v+2\notin\Z_{\le0}$, and in addition $u-v\notin\Z$ if $u,v\leq0$. 
Assume $d\in\N$, $\vvx,\vvc$ as in Proposition \ref{prop:good t}, $\summ j1{d-1}c_j<1$, and assume $i\in\Z_{\ge 0},j\in\{0,\dots,d\}$ satisfy \eqref{eq:i,j}. Then, we have
\begin{equation}\label{eq:prelimit path decomp}
\Phi^{(n)}
=
\Phi^{(n),-}+\Phi_{\emptyset}^{(n)}
+\sum_{\ell=1}^{j}\ \sum_{\vvr\in\mathbb S_\ell\topp i}
 \Phi_{\vvr}^{(n)}
+\sum_{\ell=j+1}^{d}\ \sum_{\vvr\in\mathbb S_\ell\topp{i-1}}
 \Phi_{\vvr}^{(n)}.
\end{equation}
\end{proposition}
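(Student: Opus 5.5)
The plan is to apply Proposition~\ref{prop:good t} with $f=W_n$ and then rewrite every piece in the transformed coordinates $y\mapsto 2n(1-y)$. First, $W_n$ is a bounded Borel function on the support of the measure. It is a polynomial in $\vvy$, and the support is the compact set $\prod_k\mathbb U^{(n)}_{\wt s_{n,k}}$, which consists of $[-1,1]$ together with finitely many atoms. So Proposition~\ref{prop:good t} applies; if necessary we replace $W_n$ by its product with the indicator of this compact set, which leaves every integral unchanged. The change of variables $y_k\mapsto 2n(1-y_k)$ maps $(-1,1)$ onto $(0,4n)$. It sends each positive atom $\mathsf u^{(n)}_{\wt s_{n,k},r}=\rho(q^{u+r-s_k/2})>1$ to a point $u^{(n)}_{-s_k,r}:=2n(1-\mathsf u^{(n)}_{\wt s_{n,k},r})<0$, and each negative atom $\mathsf n^{(n)}_{\wt s_{n,k},r}<-1$ to a point larger than $4n$. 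Under this map $\pi^{(n)}_{e^{-2s_1/\sqrt n},\dots}$ becomes $\wt\pi^{(n)}_{-s_1,\dots,-s_d}$ by \eqref{eq:wt pi}.

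The pieces then match as follows. The integral over $\Omega^{(d,n)}_\emptyset$ becomes $\Phi^{(n)}_\emptyset$, and the integral over $\Omega^{(d,n),+}_{\vvr}$ becomes $\Phi^{(n)}_\vvr$. Each set $\Omega^{(d,n),-}_\vvr$ with $\ell\ge1$ has first coordinate larger than $4n$, so it lies in $\R^d\setminus(-\infty,4n)^d$. Conversely, the support meets $\R^d\setminus(-\infty,4n)^d$ only in the union of these negative-atom sets, apart from boundary points $\pm1$. Those boundary points carry no mass: the continuous parts have densities, and the atoms lie strictly outside $[-1,1]$ because the defining parameters have modulus strictly greater than $1$. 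Summing the negative pieces therefore gives exactly $\Phi^{(n),-}$.

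It remains to reindex the positive sums. Here the constraint is $0\le r_1\le\cdots\le r_\ell\le j_{u,-s_\ell}$. I would argue as in the proof of Proposition~\ref{prop:path decomp}: when $u\in\C_{(s_{j+1,i}/2,s_{j,i}/2)}$ and $s_1-s_d<1$, one has $j_{u,-s_k}=i$ for $k\le j$ and $j_{u,-s_k}=i-1$ for $k>j$. Since $j_{u,-s_k}$ is nonincreasing in $k$ and the $r$'s are nondecreasing, the single constraint $r_\ell\le j_{u,-s_\ell}$ automatically gives $r_k\le j_{u,-s_k}$ for every $k\le\ell$. Hence the admissible index set is $\mathbb S^{(i)}_\ell$ when $\ell\le j$ and $\mathbb S^{(i-1)}_\ell$ when $\ell>j$, with $\mathbb S^{(-1)}_\ell=\emptyset$ when $i=0$. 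This yields \eqref{eq:prelimit path decomp}.

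The main point needing care is the disjointness and exhaustiveness of the decomposition. In particular, one must check that the transformed positive atoms are distinct from each other and lie outside $(0,4n)$, so that no $\Phi^{(n)}_\vvr$ overlaps with $\Phi^{(n)}_\emptyset$ or with $\Phi^{(n),-}$. This is immediate from $\rho(x)>1$ for $x>1$ and from the injectivity of $r\mapsto\rho(q^{u+r-s_k/2})$ on the candidate range, since there $q^{u+r-s_k/2}>1$ and $\rho$ is strictly increasing on $(1,\infty)$.
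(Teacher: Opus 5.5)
Your proposal is correct and follows essentially the paper's argument. The paper deduces Proposition~\ref{prop:openWASEP path} directly from Proposition~\ref{prop:good t} applied to the integrand (bounded on the compact support), transported by $y\mapsto 2n(1-y)$, and identifies the positive-atom index sets as $\mathbb S^{(i)}_\ell$ or $\mathbb S^{(i-1)}_\ell$ via $j_{u,-s_k}=i$ for $k\le j$ and $j_{u,-s_k}=i-1$ for $k>j$, exactly as in the proof of Proposition~\ref{prop:path decomp}. Your identification of $\Phi^{(n),-}$ with the sum of the negative-atom pieces (equivalently, applying Proposition~\ref{prop:good t} to $W_n$ times the indicator of $\R^d\setminus(-\infty,4n)^d$), together with your check that no mass sits on boundary points, fills in details the paper leaves implicit.
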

We recall that $i,j$ depend on $\vv c$ and $u$ and these $\Phi$ functions also depend on $\vvx,\vvc,u, v$. 
This decomposition is to be compared with the decomposition of $\Psi_{u,v}(\vvx,\vvc)$ in \eqref{eq:path decomp}. 
Again, when $j=0$ the first double summation on the right-hand side is understood as zero, and when $i=0$ the second double summation is understood as zero.

In the remainder of this section, we shall first compute the limiting finite-dimensional Laplace transforms under extra restrictions on the parameters in addition to those in Proposition \ref{prop:openWASEP path}, and then apply a standard sandwich argument to remove these restrictions. This result is the next proposition.

\begin{proposition}\label{prop:sandwich}
Assume $d\in\N$, $0<x_1<\cdots<x_d=1$, $c_1,\dots,c_d>0$, and $\summ j1{d-1}c_j<1$. Assume that for all $v>-c_d/2$, and $u\in\R$ such that $(u,v)\in\calD\cap\R^2$, $u+v+2\notin\Z_{\le0}$, and $u-s_k/2\notin\Z_{\le 0},k=1,\dots,d$, we have
\[
\limn \esp\exp\pp{-\summ j1dc_jh_n\topp{u,v}(x_j)} = \esp\exp\pp{-\summ j1d c_j \pp{\frac1{\sqrt 2}\B_{x_j}+\zeta\topp{u,v}(x_j)}}.
\]
Then the same convergence holds for all $u\in\R, v>-c_d/2$. 
\end{proposition}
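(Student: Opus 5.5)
Fix $v>-c_d/2$ and a target $u_0\in\R$. The set of admissible $u$ (with $(u,v)\in\calD\cap\R^2$, $u+v+2\notin\Z_{\le0}$, $u-s_k/2\notin\Z_{\le0}$) omits only a discrete set of reals, so we can pick admissible $u_-<u_0<u_+$ arbitrarily close to $u_0$. The plan is a monotone-coupling sandwich. The open ASEP stationary measure is stochastically monotone in the boundary rates: increasing $C_n=q_n^{u}$ (equivalently decreasing $u$, since $q_n<1$) increases the entry rate $\alpha$ and lowers $\gamma$ under Liggett's condition $\alpha+\gamma/q=1$. Hence the stationary configurations under parameters $u_+$, $u_0$, $u_-$ (same $v$) can be coupled so that $\tau^{(u_+)}\le\tau^{(u_0)}\le\tau^{(u_-)}$ coordinatewise. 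This is standard attractiveness of ASEP with monotone boundary reservoirs; I would cite it, or prove it by the basic coupling of the dynamics, which preserves the order and then passes to stationarity.

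The height function is a prefix sum of $2\tau_{n,j}-1$, so $h_n^{(u_+,v)}(x)\le h_n^{(u_0,v)}(x)\le h_n^{(u_-,v)}(x)$ for all $x$. Because $c_j>0$, we get
\[
\esp e^{-\sum c_jh^{(u_-,v)}_n(x_j)}\le \esp e^{-\sum c_jh^{(u_0,v)}_n(x_j)}\le \esp e^{-\sum c_jh^{(u_+,v)}_n(x_j)}.
\]
The hypothesis gives the limits of the outer terms, namely $\esp\exp(-\sum c_j(\B_{x_j}/\sqrt2+\zeta^{(u_\pm,v)}_{x_j}))$. It remains to let $u_\pm\to u_0$, that is, to show that $u\mapsto \esp\exp(-\sum c_j(\B_{x_j}/\sqrt2+\zeta^{(u,v)}_{x_j}))$ is continuous at $u_0$. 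By independence this expectation equals $e^{G(\vvx,\vvc)}\wt\Psi_{u,v}(\vvx,\vvc)/\wt K_{u,v}$. By Proposition~\ref{prop:analytic RN} the numerator and denominator are entire in $u$, and $\wt K_{u,v}>0$ for real $u,v$ since it is the expectation of a positive random variable. So the ratio is continuous, and squeezing gives the claim at $u_0$.

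The main obstacle is to justify the monotone coupling cleanly in the parametrization \eqref{eq:parameters}. I need to check that $\alpha_n$ is monotone in $u$ and $\gamma_n$ is monotone in the opposite direction, with $\beta_n,\delta_n$ unchanged because $A_n,B_n$ do not depend on $u$. This requires inverting $C=\kappa^+(q,\alpha,\gamma)$, $D=\kappa^-(q,\alpha,\gamma)$ with $D=-q$, which gives $\alpha=(1-q)/(1+C)$ and $\gamma=q(1-\alpha)$. Both are monotone in $C$ in the right directions. If one wants to avoid attractiveness, an alternative is to use Proposition~\ref{prop:openWASEP path} on both sides, but the coupling route is simpler, and I would try it first.
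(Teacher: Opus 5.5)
Your overall strategy is the paper's: a monotone coupling in the boundary parameters, reversed inequalities for the Laplace transforms because $c_j>0$, then continuity of $e^{G(\vvx,\vvc)}\wt\Psi_{u,v}(\vvx,\vvc)/\wt K_{u,v}$ from Proposition~\ref{prop:analytic RN} and $\wt K_{u,v}>0$. However, fixing $v$ and perturbing only $u$ leaves a genuine gap.

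Your claim that the admissible $u$ omit only a discrete set fails when $v\in\Z_{\le 0}$. The condition $(u,v)\in\calD$ forces $v\notin\Z_{\le0}$, so for such $v$ \emph{no} $u$ is admissible, and the hypothesis says nothing on the whole line $\{v\}\times\R$. This case is not vacuous, since $v=0$ always satisfies $v>-c_d/2$.

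The paper handles this by perturbing $v$ as well. It picks admissible pairs $(u_m^-,v_m^+)$ and $(u_m^+,v_m^-)$ with $u_m^-<u<u_m^+$ and $v_m^-<v<v_m^+$. It then uses stochastic monotonicity in both boundaries to couple $h_n^{(u_m^-,v_m^+)}\le h_n^{(u,v)}\le h_n^{(u_m^+,v_m^-)}$. The height is decreasing in $v$: $B_n=-q_n$ gives $\beta_n=1/(1+A_n)$ and $\delta_n=q_nA_n/(1+A_n)$, with $A_n=q_n^v$ decreasing in $v$. Alternatively, you could first run your $u$-sandwich for all $v\notin\Z_{\le0}$, then do a second sandwich in $v$ at fixed $u$. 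Either way you also need the monotone coupling in the right boundary rates, which your proposal does not address.

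Separately, your monotonicity is stated backwards. With $D=-q$, the relations $CD=-\gamma/\alpha$ and $C+D=(1-q-\alpha+\gamma)/\alpha$ give $\alpha=1/(1+C)$ (not $(1-q)/(1+C)$) and $\gamma=qC/(1+C)$. So increasing $C_n=q_n^u$, i.e.\ decreasing $u$, \emph{lowers} $\alpha$ and \emph{raises} $\gamma$. Your own formula already says this, which contradicts your sentence. The correct coupling is $\tau^{(u_-)}\le\tau^{(u_0)}\le\tau^{(u_+)}$, so the height is increasing in $u$, matching the boundary slope $u$ at $x=0$. Your displayed inequalities should therefore be flipped. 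This does not damage the squeeze, which only needs some monotone coupling with $u_0$ between $u_-$ and $u_+$.
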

\begin{remark}\label{rem:uv}
We have imposed various assumptions on $u,v$ in different results. For the integral representations involving auxiliary measures in Theorem \ref{thm:AC}, $(u,v)\in\calD$ is needed and, for the numerator representation, we also impose
\equh\label{eq:c}
c_j>0, j\in\llbracket d\rrbracket, \quad \summ j1{d-1}c_j<1,\quad  -c_d<2\rmRe(v), \qmand \rmRe(u)-\frac{s_j}2\notin\Z_{\le0}, j\in\llbracket d\rrbracket.
\eque Here, for the pre-limit model to have an Askey--Wilson representation, we impose $u+v+2\notin\Z_{\le0}$ and $u-v\notin\Z$ if $u\le 0,v\le 0$; note that the latter is guaranteed for $(u,v)\in\calD\cap\R^2$, but not the former. 
For the dominated convergence arguments below, we further impose $u,v\notin\Z_{\le0}$  and \eqref{eq:c} so that we can apply the uniform estimates in Lemmas \ref{lem:marginal} and \ref{lem:transitional}. All these  restrictions on $u,v$ except $v>-c_d/2$  are removed by Proposition \ref{prop:sandwich}.
\end{remark}
\begin{proof}[Proof of Proposition \ref{prop:sandwich}]
For fixed $\vvc$, the collection of $(u,v)$ satisfying the parameter restrictions under which the convergence is assumed in Proposition \ref{prop:sandwich} is dense in $\{(u,v):v>-c_d/2\}$. Thus, we choose sequences $(u^\pm_m)_{m\in\N}, (v^\pm_m)_{m\in\N}$ such that $(u^-_m,v^+_m), (u^+_m,v^-_m)$ satisfy the assumption, $u^-_m<u<u^+_m$ and $v^-_m<v<v^+_m$ for each $m\in\N$, and $u^-_m\uparrow u, u^+_m\downarrow u, v^-_m\uparrow v, v^+_m\downarrow v$ as $m\to\infty$.
By stochastic monotonicity of the stationary measure of the open ASEP \citep[Lemma~4.12]{wang25asymmetric}, for each $m$ fixed,  the three height functions can be coupled so that $h_n\topp{u^-_m,v_m^+}(x)\le h_n\topp{u,v}(x)\le h_n\topp{u_m^+,v_m^-}(x)$ for all $x\in[0,1]$ almost surely. 
Since  $c_k>0$, the corresponding finite-dimensional Laplace transforms are in the reversed order. That is, 
\[
\esp\exp\pp{-\summ  j1dc_jh_n\topp{u_m^+,v_m^-}(x_j)}
\le
\esp\exp\pp{-\summ j1d c_jh_n\topp{u,v}(x_j)}
\le 
\esp \exp\pp{-\summ j1d c_jh\topp{u_m^-,v_m^+}_n(x_j)}.
\]  Taking first $n\to\infty$ and then $m\to\infty$ proves the claim by continuity of the limiting Laplace transform in $(u,v)$, which follows from Proposition \ref{prop:analytic RN}. 
\end{proof}

\subsection{Asymptotics of Askey--Wilson signed measures}\label{sec:AWasym}
We now establish asymptotic formulas for the densities and atomic masses of the rescaled Askey--Wilson marginals and transition kernels, relating them to the auxiliary measures introduced in Section \ref{sec:CDH}. We write $f(q)\sim g(q)$ as $q\uparrow1$ if $\lim_{q\uparrow 1}f(q)/g(q) = 1$. Equivalently, when viewing $q = q_n$ as a function of $n$, we write $f_n\sim g_n$ as $n\to\infty$ if $\limn f_n(q)/g_n(q) = 1$.

\begin{lemma}\label{lem:q1} 
For every fixed $z\in\C\setminus\Z_{\le0}$, we have
\begin{align}
    (q^z;q)_{\infty} & \sim \frac{(q;q)_{\infty}(1-q)^{1-z}}{\Gamma(z)},\nonumber\\
\label{e:quneg}
    (-q^z;q)_{\infty} & \sim \frac{(q^{2};q^{2})_{\infty}}{(q;q)_{\infty}}2^{1-z},
\end{align}
as $q\uparrow 1$. The asymptotic equivalence \eqref{e:quneg} also holds for
$z\in\Z_{\le0}$.
\end{lemma}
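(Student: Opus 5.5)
The plan is to deduce the first asymptotic from the $q$-Gamma function and the second from an algebraic identity for $q$-Pochhammer symbols; no analytic work beyond a classical limit is needed. Recall $\Gamma_q(z)=(q;q)_\infty(q^z;q)_\infty^{-1}(1-q)^{1-z}$ for $z\notin\Z_{\le0}$ (and $0<q<1$), together with the classical fact that $\Gamma_q(z)\to\Gamma(z)$ as $q\uparrow1$, locally uniformly on $\C\setminus\Z_{\le0}$.

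For the first asymptotic, we solve the definition for $(q^z;q)_\infty$:
\[
(q^z;q)_\infty=\frac{(q;q)_\infty(1-q)^{1-z}}{\Gamma_q(z)}.
\]
Since $\Gamma(z)$ is finite and nonzero for $z\notin\Z_{\le0}$, the limit $\Gamma_q(z)\to\Gamma(z)$ gives the claimed equivalence. If we want a self-contained argument rather than quoting this limit, we would write $\log\Gamma_q(z)$ as a series and compare it with Euler's product for $\Gamma$. A cleaner route is the Bohr--Mollerup-type characterization: $\Gamma_q$ satisfies $\Gamma_q(z+1)=[z]_q\Gamma_q(z)$, where $[z]_q=(1-q^z)/(1-q)\to z$. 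We would reduce to $\operatorname{Re}z$ large using this recurrence and then apply dominated convergence to $\log\Gamma_q(z)=\sum_{k\ge0}\bigl(\log(1-q^{k+1})-\log(1-q^{k+z})\bigr)+(1-z)\log(1-q)$. The main technical point, if one does not simply cite the standard $q$-Gamma limit (Gasper--Rahman), is justifying this limit for complex $z$. We would quote it.

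For the second asymptotic, we use the identity $(-x;q)_\infty=(x^2;q^2)_\infty/(x;q)_\infty$, which follows factorwise from $(1+xq^k)(1-xq^k)=1-x^2q^{2k}$. With $x=q^z$, writing $p=q^2$,
\[
(-q^z;q)_\infty=\frac{(p^{z};p)_\infty}{(q^z;q)_\infty}.
\]
For $z\notin\Z_{\le0}$ we apply the first asymptotic to both factors, using $1-p=(1-q)(1+q)$:
\[
\frac{(p;p)_\infty(1-p)^{1-z}/\Gamma(z)}{(q;q)_\infty(1-q)^{1-z}/\Gamma(z)}=\frac{(q^2;q^2)_\infty}{(q;q)_\infty}(1+q)^{1-z}\sim\frac{(q^2;q^2)_\infty}{(q;q)_\infty}2^{1-z}.
\]

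For $z\in\Z_{\le0}$ the formula above is unavailable, since both factors vanish. Here we write $z=-m$ and peel off the first $m+1$ factors:
\[
(-q^{-m};q)_\infty=\prod_{k=0}^{m}(1+q^{k-m})\cdot(-q;q)_\infty.
\]
The finite product tends to $2^{m+1}$, and by the noninteger case at $z=1$ we have $(-q;q)_\infty\sim\frac{(q^2;q^2)_\infty}{(q;q)_\infty}$. Multiplying gives $2^{1-z}$ times the same prefactor, as claimed. No step is a real obstacle beyond the cited $q$-Gamma limit.
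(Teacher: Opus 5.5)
Your proposal is correct and matches the paper's proof: the $q$-Gamma identity with $\Gamma_q\to\Gamma$ for the first equivalence, the factorization $(-q^z;q)_\infty=(q^{2z};q^2)_\infty/(q^z;q)_\infty$ combined with the base-$q^2$ $q$-Gamma asymptotics for the second, and peeling off finitely many factors for $z\in\Z_{\le0}$. Your version of the last step, $(-q^{-m};q)_\infty=\prod_{k=0}^m(1+q^{k-m})\,(-q;q)_\infty$, is the paper's identity with $(-q;q)_\infty=(q^2;q^2)_\infty/(q;q)_\infty$ written out.
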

\begin{proof}
We recall the identity for $z\in\C\setminus\Z_{\le0}$,
\equh\label{eq:q Gamma}
(q^z;q)_\infty = \frac{(q;q)_\infty (1-q)^{1-z}}{\Gamma_q(z)}, 
\eque
 where $\Gamma_q$ is the $q$-Gamma function.
Using $\lim_{q\to 1}\Gamma_q(z)= \Gamma(z)$, we have
\begin{align}
\frac{(q^z;q)_\infty}{(q;q)_\infty}& = {(1-q)^{1-z}}\frac{\Gamma_q(1)}{\Gamma_q(z)} \sim (1-q)^{1-z}\frac{1}{\Gamma(z)},\nonumber \\
\frac{(q^{2z};q^2)_\infty}{(q^{2};q^2)_\infty} &= {(1-q^2)^{1-z}}\frac{\Gamma_{q^2}(1)}{\Gamma_{q^2}(z)} \sim 2^{1-z}(1-q)^{1-z}\frac{1}{\Gamma(z)},\nonumber \\
(-q^z;q)_\infty & = \frac{(q^{2z};q^2)_\infty}{(q^z;q)_\infty}  = \frac{(q^2;q^2)_\infty}{(q;q)_\infty}\frac{(1-q^2)^{1-z}}{(1-q)^{1-z}}\frac{\Gamma_{q}(z)}{\Gamma_{q^2}(z)}\sim \frac{(q^2;q^2)_\infty}{(q;q)_\infty}2^{1-z}.\nonumber
\end{align}
If $z=-m$ with $m\in\mathbb Z_{\ge0}$, then
\[
(-q^{-m};q)_\infty = \frac{(q^2;q^2)_\infty}{(q;q)_\infty} \prod_{r=0}^{m}(1+q^{-r}) \sim \frac{(q^2;q^2)_\infty}{(q;q)_\infty}2^{m+1},
\]
which proves \eqref{e:quneg} for $z\in\mathbb Z_{\le0}$.
\end{proof}

From now on, we write
\[
q=q_n=e^{-2/\sqrt n},
\]
 and set $y_n=1-y/(2n)$ and $z_n=1-z/(2n)$. For $x\in\R$, $y>0$, and $z\geq0$, take $n$ sufficiently large that $0<y<4n$ and $0\leq z<4n$. 

\begin{lemma}\label{lem:q2}
Fix $x\in\R$, $y>0$, and $z\geq0$. Then, as $n\to\infty$,
\begin{align}
\abs{(q^xe^{\i\theta_{y_n}};q)_\infty}^2
&\sim
\frac{(q;q)_\infty^2(-\log q)^{2-2x}}
{\displaystyle\abs{\Gamma\pp{x+\frac12\i\sqrt y}}^2},
\label{e:qutheta}\\
\abs{(-q^xe^{\i\theta_{y_n}};q)_\infty}^2
&\sim
\frac{(q^2;q^2)_\infty^2}{(q;q)_\infty^2}2^{2-2x},
\label{e:quthetaneg}\\
\abs{(e^{2\i\theta_{y_n}};q)_\infty}^2
&\sim
\frac{(q;q)_\infty^2(-\log q)^2}
{\abs{\Gamma\pp{\i\sqrt y}}^2},
\label{e:2theta}\\
\abs{(q^xe^{\i(\theta_{y_n}\pm\theta_{z_n})};q)_\infty}^2
&\sim
\frac{(q;q)_\infty^2(-\log q)^{2-2x}}
{\displaystyle\abs{\Gamma\pp{x+
\frac12\i\pp{\sqrt y\pm\sqrt z}}}^2}.
\label{e:plustheta}
\end{align}
In \eqref{e:plustheta}, the relation is understood to hold when both sides take the plus or the minus sign, and with the minus
sign when $y=z$, it is assumed in addition that
$x\notin\Z_{\leq0}$.
\end{lemma}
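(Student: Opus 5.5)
The plan is to reduce every product in Lemma~\ref{lem:q2} to $q$-Pochhammer symbols of the form $(q^{w};q)_\infty$ or $(-q^{w};q)_\infty$ with a complex exponent $w$ converging to an explicit limit. We then apply Lemma~\ref{lem:q1}, in a version that is locally uniform in the exponent.

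First, we identify the angle. Since $\cos\theta_{y_n}=1-y/(2n)$ and $\theta_{y_n}\to0$, the expansion $1-\cos\theta=\theta^2/2+O(\theta^4)$ gives $\theta_{y_n}=\sqrt{y/n}\,(1+O(1/n))$. With $-\log q=2/\sqrt n$, we can write $e^{\i\theta_{y_n}}=q^{-\i\tau_n(y)}$, where
\[
\tau_n(y):=\frac{\theta_{y_n}}{-\log q}\to\frac{\sqrt y}{2}.
\]
The same holds for $z$, including $z=0$, where $\theta_{z_n}=0$. Since $q^{x}$ is real, $\overline{(q^xe^{\i\theta};q)_\infty}=(q^xe^{-\i\theta};q)_\infty$. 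Hence
\[
\abs{(q^xe^{\i\theta_{y_n}};q)_\infty}^2=(q^{x-\i\tau_n};q)_\infty\,(q^{x+\i\tau_n};q)_\infty .
\]
Analogous identities hold for the other three products, with exponents $\pm2\i\tau_n(y)$ in \eqref{e:2theta} and $x\pm\i(\tau_n(y)\pm\tau_n(z))$ in \eqref{e:plustheta}.

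Next, we apply the identity \eqref{eq:q Gamma}, $(q^w;q)_\infty=(q;q)_\infty(1-q)^{1-w}/\Gamma_q(w)$, to each factor. The prefactors combine to
\[
(1-q)^{1-w}(1-q)^{1-\bar w}=(1-q)^{2-2\rmRe w}.
\]
Since $(1-q)/(-\log q)\to1$ and $\rmRe w$ is fixed, we may replace $1-q$ by $-\log q$ here. This produces $(-\log q)^{2-2x}$ in \eqref{e:qutheta} and \eqref{e:plustheta}, and $(-\log q)^2$ in \eqref{e:2theta}. The Gamma factor $\Gamma_q(w_n)\Gamma_q(\bar w_n)$ then converges to $\abs{\Gamma(w)}^2$, where $w$ is the limit of $w_n$. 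For \eqref{e:quthetaneg}, we similarly use
\[
(-q^w;q)_\infty=\frac{(q^2;q^2)_\infty}{(q;q)_\infty}\,\frac{(1-q^2)^{1-w}}{(1-q)^{1-w}}\,\frac{\Gamma_q(w)}{\Gamma_{q^2}(w)},
\]
as in the proof of Lemma~\ref{lem:q1}. The modulus squared of the middle ratio tends to $2^{2-2x}$, and the last ratio tends to $1$.

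The main obstacle is that the exponent $w_n$ depends on $n$, whereas Lemma~\ref{lem:q1} is stated for fixed $z$. We therefore need $\Gamma_q(w)\to\Gamma(w)$ uniformly for $w$ in a compact set avoiding $\Z_{\le0}$. This is a classical fact, which we would cite or verify directly. One way is through $\log\Gamma_q(w)=\log\Gamma_q(w+N)-\sum_{k<N}\log[w+k]_q$. Another is through the Euler-type product for $\Gamma_q$, which converges locally uniformly; Koornwinder's proof of $\Gamma_q\to\Gamma$ gives this. It must also be checked that each limit exponent avoids the poles. For \eqref{e:qutheta} and \eqref{e:plustheta} with $y\ne z$ or with the plus sign, the limit has nonzero imaginary part, or real part $x$ with nonzero imaginary part. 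For \eqref{e:2theta}, the limit is $\pm\i\sqrt y\ne0$ because $y>0$. For the minus sign in \eqref{e:plustheta} with $y=z$, the limit is $x$, which is exactly where the hypothesis $x\notin\Z_{\le0}$ is needed. The negative-sign estimate \eqref{e:quthetaneg} needs no such condition: when the limit exponent is an integer, we use the finite-product argument from the end of the proof of Lemma~\ref{lem:q1}, which is uniform in small perturbations.
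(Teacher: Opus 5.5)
Your proposal is correct, and for \eqref{e:qutheta}, \eqref{e:2theta} and \eqref{e:plustheta} it is the paper's proof: the exact identity \eqref{eq:q Gamma} is applied at the complex argument $x+\frac12\i\sqrt n\,\theta_{y_n}$ (your $x+\i\tau_n$), combined with locally uniform convergence $\Gamma_q\to\Gamma$ away from the poles and with $1-q\sim-\log q$. The only difference is in \eqref{e:quthetaneg}. There the paper avoids complex $q$-Gamma arguments by computing directly $\abs{(-q^xe^{\i\theta_{y_n}};q)_\infty}^2/(-q^x;q)_\infty^2=\prod_{k\ge0}\bigl(1-yq^{x+k}/(n(1+q^{x+k})^2)\bigr)\to1$, where the subtracted terms sum to at most $yq^x/(n(1-q))=O(n^{-1/2})$, and then applies \eqref{e:quneg} at the real exponent $x$. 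Your $\Gamma_q/\Gamma_{q^2}$ route also works, because for $y>0$ the limit exponent $x\pm\frac12\i\sqrt y$ is non-real; in particular, your separate treatment of integer limit exponents is never needed.
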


\begin{proof}
By the $q$-Gamma identity \eqref{eq:q Gamma} and recalling $q^{\i\sqrt n\theta_{y_n}/2} =e^{-\i\theta_{y_n}}$, we obtain 
\begin{align}
\abs{(q^xe^{\i\theta_{y_n}};q)_\infty}^2
&=\frac{(q;q)_\infty^2(1-q)^{2-2x}} {\displaystyle\abs{\Gamma_q\pp{x+\frac12\i\sqrt n\,\theta_{y_n}}}^2}, \label{eq:qgamma-exact-single-angle}\\
\abs{(e^{2\i\theta_{y_n}};q)_\infty}^2 &=\frac{(q;q)_\infty^2(1-q)^2} {\abs{\Gamma_q\pp{\i\sqrt n\,\theta_{y_n}}}^2},\label{eq:qgamma-exact-double-angle}
\\
\abs{(q^xe^{\i(\theta_{y_n}\pm\theta_{z_n})};q)_\infty}^2 &=\frac{(q;q)_\infty^2(1-q)^{2-2x}}
{\displaystyle\abs{\Gamma_q\pp{x+\frac12\i\sqrt n\,\pp{\theta_{y_n}\pm\theta_{z_n}}}}^2}.
\label{eq:qgamma-exact-mixed-angle}
\end{align}

For fixed $y,z$, we have
$\limn\sqrt n\theta_{y_n} =\sqrt y, \limn \sqrt n\theta_{z_n}=\sqrt z$.
The convergence $\Gamma_q\to\Gamma$ is locally uniform on compact sets avoiding the poles of $\Gamma$.  Since $1-q\sim-\log q$, the exact identities \eqref{eq:qgamma-exact-single-angle},
\eqref{eq:qgamma-exact-double-angle}, and
\eqref{eq:qgamma-exact-mixed-angle} give
\eqref{e:qutheta}, \eqref{e:2theta}, and \eqref{e:plustheta}.
When $y=z$ and the minus sign is used, the additional assumption
ensures that the limiting Gamma factor is finite.

For the factor with a negative parameter, direct multiplication
gives
\[
\frac{\abs{(-q^xe^{\i\theta_{y_n}};q)_\infty}^2}
{(-q^x;q)_\infty^2}
=\prod_{k=0}^\infty
\pp{1-\frac{yq^{x+k}}{n(1+q^{x+k})^2}}\to 1,
\]
as $n\to\infty$. Indeed, this follows from the observation that 
\[
\sif k0 \frac{yq^{x+k}}{n(1+q^{x+k})^2} \le \frac{y}{n}\sum_{k=0}^\infty q^{x+k}
=\frac{yq^x}{n(1-q)}\to 0 \mbox{ as } n\to\infty. 
\]
Now, \eqref{e:quthetaneg} follows from \eqref{e:quneg}.
\end{proof}

\begin{lemma}\label{lem:uniform-q-product-bounds}
~
\begin{enumerate}[(i)]
\item For each fixed $a\in\mathbb R\setminus\mathbb Z_{\leq0}$, $\epsilon\in(0,1)$, 
\begin{equation}\label{eq:uniform-qgamma-upper}
\sup_{q\in[1-\epsilon,1],r\in\R} |\Gamma_q(a+\mathrm ir)|<\infty.
\end{equation}
For each fixed $a>0$, $\epsilon\in(0,1)$, there exists $C>0$ such that
\begin{equation}\label{eq:uniform-qgamma-inverse}
\sup_{q\in[1-\epsilon,1]} |\Gamma_q(a+\mathrm ir)|^{-2}
 \leq C e^{C|r|}, \mfa r\in\R.
\end{equation}
For each $\epsilon\in(0,1)$, there exists a constant $C>0$ such that
\equh\label{eq:BKW}
\sup_{q\in[1-\epsilon,1]}|\Gamma_q(\i r)|^{-2} \le  Cr^2e^{\pi |r|}, \mfa r\ne 0.
\eque

\item For $q=e^{-\kappa}\in(0,1)$ and $x\in\R$, define
\[
R_{q,x}(\theta):=\frac{(-q^x;q)_\infty^2}{|(-q^xe^{\mathrm i\theta};q)_\infty|^2},\quad 0\leq\theta<\pi.
\]
Then $R_{q,x}(\theta)\geq1$. For each fixed $x\in\R$ and $\epsilon\in(0,1)$, there exists $C>0$ such that
\begin{equation}\label{eq:R bound}
R_{q,x}(\theta)\leq e^{C\theta^2/\kappa},\quad \mfa 0\leq\theta\leq\frac{\pi}{2}, q\in[1-\epsilon,1).
\end{equation}
For every fixed $y\geq0$, with $q=q_n$ and $y_n=1-y/(2n)$, we have $R_{q_n,x}(\theta_{y_n})\to1$ as $n\to\infty$.\end{enumerate}
\end{lemma}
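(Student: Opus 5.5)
Every bound in the lemma will be reduced to the product representation $\Gamma_q(z)=(1-q)^{1-z}(q;q)_\infty/(q^z;q)_\infty$ from \eqref{eq:q Gamma}, together with elementary estimates on the individual factors $|1-q^{a+k}e^{\i\phi}|$. Throughout we write $q=e^{-\kappa}$ with $\kappa\in(0,\kappa_\epsilon]$, where $\kappa_\epsilon:=-\log(1-\epsilon)$. The endpoint $q=1$ is covered by the ordinary Gamma function, for which all three bounds in (i) are standard via Stirling's formula.

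\textbf{Part (i), upper bound \eqref{eq:uniform-qgamma-upper}.} For $z=a+\i r$ with $\phi:=r\kappa$ we have
\[
|(q^z;q)_\infty|=\prod_{k\ge0}|1-q^{a+k}e^{-\i\phi}|.
\]
Each factor satisfies $|1-q^{a+k}e^{-\i\phi}|\ge |1-q^{a+k}|$. Hence
\[
|\Gamma_q(a+\i r)|\le |\Gamma_q(a)|\qquad\text{for } a>0,
\]
and $\sup_{q\in[1-\epsilon,1]}\Gamma_q(a)<\infty$ because $\Gamma_q(a)\to\Gamma(a)$ continuously. For $a\in\R\setminus\Z_{\le0}$ with $a<0$ we use the functional equation $\Gamma_q(z+1)=[z]_q\Gamma_q(z)$ finitely many times. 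This reduces to positive real part, provided $\inf|[a+m+\i r]_q|$ is bounded below uniformly. That lower bound follows from $|1-q^{w}|\ge|1-q^{\rmRe w}|$, and $\rmRe w=a+m$ is a fixed nonzero number.

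\textbf{Part (i), bounds \eqref{eq:uniform-qgamma-inverse} and \eqref{eq:BKW}.} We bound $1/|\Gamma_q|^2$ from above, that is, $|(q^{a+\i r};q)_\infty|^2$ over $|(q;q)_\infty|^2(1-q)^{2-2a}$. Since $|1-q^{a+k}e^{-\i\phi}|\le 1+q^{a+k}$, the ratio is at most
\[
\frac{(-q^a;q)_\infty^2}{(q;q)_\infty^2}(1-q)^{2a-2}.
\]
This is only of size $e^{C/\kappa}$, which is not uniform in $q$, so a finer argument is needed. We instead compare to the $q=1$ case. For $|r|\le 1$ the quantity is continuous in $(q,r)$ on a compact set and $\Gamma_q(a+\i r)\neq0$, so it is bounded. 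For $|r|>1$ we split $\log|(q^{a+\i r};q)_\infty|^2$ into the terms with $k\kappa\lesssim 1$ and the rest. For the first group we use $|1-e^{-w}|\le |w|e^{|w|}$ with $w=(a+k+\i r)\kappa$, which produces a product comparable to $\prod_{k\le 1/\kappa}\kappa|a+k+\i r|$. For the second group we use $|1-e^{-w}|\le 1+e^{-\rmRe w}$. Against the denominator this gives a bound $e^{C|r|}$, mimicking the classical estimate $1/|\Gamma(a+\i r)|^2\le Ce^{\pi|r|}$. I expect this to be the main obstacle: making the Euler–Maclaurin-type comparison uniform in $q$ is delicate. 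A cleaner alternative I would try first is to cite known uniform bounds (e.g.\ the estimates used for \eqref{eq:BKW} in the works of Bryc--Kuznetsov--Wang). I would obtain \eqref{eq:BKW} from $\Gamma_q(\i r)=\Gamma_q(1+\i r)/[\i r]_q$ together with $|[\i r]_q|\le C|r|$. The latter holds when $|r|\kappa\le\pi$; otherwise periodicity in $r$ with period $2\pi/\kappa$ and the bound at $a=1$ handle it.

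\textbf{Part (ii).} Write $\beta_k:=q^{x+k}$. Then
\[
|1+\beta_k e^{\i\theta}|^2=(1+\beta_k)^2-2\beta_k(1-\cos\theta),
\qquad\text{so}\qquad
\frac{|1+\beta_ke^{\i\theta}|^2}{(1+\beta_k)^2}=1-\frac{2\beta_k(1-\cos\theta)}{(1+\beta_k)^2}\le 1.
\]
This gives $R_{q,x}\ge1$. For the upper bound, note that each subtracted term is at most $\beta_k\theta^2/(1+\beta_k)^2$. With $\theta\le\pi/2$ one has $2\beta_k(1-\cos\theta)/(1+\beta_k)^2\le 1-\cos\theta\le 1-c_0$ for some $c_0>0$, since $2\beta/(1+\beta)^2\le 1/2$. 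Hence $-\log(1-t)\le Ct$ on this range, and
\[
\log R_{q,x}\le C\theta^2\sum_k \frac{\beta_k}{(1+\beta_k)^2}.
\]
The sum is at most $\min\bigl(q^x,1/q^x\bigr)$-type terms plus roughly $\int_0^\infty e^{-x\kappa-s\kappa}\,ds$, which is $O(1/\kappa)$ uniformly in $q\in[1-\epsilon,1)$. This gives \eqref{eq:R bound}. Finally, $\theta_{y_n}^2\sim y/n$ and $\kappa=2/\sqrt n$, so the bound only yields $e^{Cy/\sqrt n}\to1$; combined with $R\ge1$, this gives $R_{q_n,x}(\theta_{y_n})\to1$ (consistent with the product computation in the proof of Lemma \ref{lem:q2}).
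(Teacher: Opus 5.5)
Your arguments for \eqref{eq:uniform-qgamma-upper} and for part (ii) are correct and follow the paper. Two small points there. The functional-equation detour for $a<0$ is unnecessary, since $|1-\rho e^{\i\phi}|\ge|1-\rho|$ holds for every $\rho>0$. In (ii), the subtracted term is at most $(1-\cos\theta)/2\le 1/2$; your intermediate claim $1-\cos\theta\le1-c_0$ fails at $\theta=\pi/2$.

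The genuine gap is \eqref{eq:uniform-qgamma-inverse}, and with it \eqref{eq:BKW}. The splitting you sketch cannot give a bound uniform in $q$. Each of your crude replacements loses a factor of order $e^{c/\kappa}$ that does not depend on $r$, and this is not $\le e^{C|r|}$ when $1<|r|\ll 1/\kappa$.
\begin{itemize}
\item In the tail: the true product $\prod_{k\kappa\gtrsim1}|1-q^{a+k}e^{-\i\kappa r}|^2$ is of size $e^{-c/\kappa}$ for bounded $r$, while your bound $\prod(1+q^{a+k})^2$ is $\ge1$.
\item In the head: the factors $e^{|w|}$ alone contribute $\exp\bigl(\kappa\sum_{k\le 1/\kappa}(a+k)\bigr)\approx e^{1/(2\kappa)}$.
\end{itemize}
Falling back on citing external uniform bounds leaves the step unproved. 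Moreover, even a correct $Ce^{C|r|}$ bound at $a=1$ would only give $Cr^2e^{C|r|}$ in \eqref{eq:BKW}, not the stated exponent $\pi|r|$.

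The missing idea is to compare each factor with the same-index factor $1-q^{a+k}$, so that the $r$-independent parts cancel exactly rather than being estimated separately. Since
\[
|1-q^{a+k}e^{-\i\kappa r}|^2=(1-q^{a+k})^2+4q^{a+k}\sin^2(\kappa r/2),
\]
for $a>0$ one has
\[
\frac{|\Gamma_q(a)|^2}{|\Gamma_q(a+\i r)|^2}
=\prod_{k\ge0}\Bigl(1+\frac{\sin^2(\kappa r/2)}{\sinh^2(\kappa(a+k)/2)}\Bigr)
\le\prod_{k\ge0}\Bigl(1+\frac{r^2}{(a+k)^2}\Bigr)
=\frac{|\Gamma(a)|^2}{|\Gamma(a+\i r)|^2},
\]
using $|\sin x|\le|x|$ and $\sinh x\ge x$. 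Combined with $\inf_{q\in[1-\epsilon,1]}\Gamma_q(a)>0$ and Stirling's formula, this gives \eqref{eq:uniform-qgamma-inverse}.

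At $a=1$, since $\Gamma_q(1)=1$, the same comparison gives $|\Gamma_q(1+\i r)|^{-2}\le\sinh(\pi r)/(\pi r)\le e^{\pi|r|}$. Your identity $\Gamma_q(\i r)=\Gamma_q(1+\i r)/[\i r]_q$ then finishes the job, with $|[\i r]_q|=2|\sin(\kappa r/2)|/(1-q)\le C|r|$. That bound holds for all $r$, so no periodicity argument is needed. This yields \eqref{eq:BKW} with the sharp constant $\pi$.
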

\begin{proof}
Recall that
\[
 \Gamma_q(z)=(1-q)^{1-z}
 \frac{(q;q)_\infty}{(q^z;q)_\infty}.
\]
For real $a\notin\mathbb Z_{\leq0}$ and $r$, the factorwise
inequality $|1-q^{a+k+\mathrm ir}|\geq|1-q^{a+k}|$ gives
\[
 |\Gamma_q(a+\mathrm ir)|\leq|\Gamma_q(a)|.
\]
The pointwise convergence of $\Gamma_q$ to $\Gamma$ proves
\eqref{eq:uniform-qgamma-upper}.

Write $q = e^{-\kappa}$. For $a>0$, we have
\[
 \frac{|\Gamma_q(a)|^2}{|\Gamma_q(a+\mathrm ir)|^2}
 =\prod_{k=0}^\infty
 \left(1+\frac{\sin^2(\kappa r/2)}
 {\sinh^2(\kappa(a+k)/2)}\right)\leq\prod_{k=0}^\infty
 \left(1+\frac{r^2}{(a+k)^2}\right)
 =\frac{|\Gamma(a)|^2}{|\Gamma(a+\mathrm ir)|^2}.
\]
Here we used $|\sin x|\leq|x|$, $\sinh x\geq x$ for $x\geq0$, and the classical Gamma product identity. Since $|\Gamma(a+\i r)|^{-2}\sim (2\pi)\inv |r|^{1-2a}e^{\pi|r|}$ as $|r|\to\infty$ for all $a>0$, the claimed \eqref{eq:uniform-qgamma-inverse} now follows.

For $a=0$, the product comparison above with $a=1$ gives
\[
 |\Gamma_q(1+\mathrm ir)|^{-2}
 \leq\prod_{k=1}^\infty\left(1+\frac{r^2}{k^2}\right)
 =\frac{\sinh(\pi r)}{\pi r}\leq e^{\pi r},
 \quad r>0.
\]
Using the recurrence for $\Gamma_q$ and $\kappa r=2\theta$,
we obtain
\[
 |\Gamma_q(\mathrm ir)|^{-2}  =\frac{4\sin^2\theta}{(1-q)^2}  |\Gamma_q(1+\mathrm ir)|^{-2} \leq C\kappa^{-2}\sin^2\theta\,e^{\pi r} \le Cr^2e^{\pi r},
\]
where in the first inequality we used $\kappa/(1-q)$ over $[1-\epsilon,1]$ is uniformly bounded, and in the last step we used $|\sin\theta|\le \theta = \kappa r/2$.

We next prove (ii). Direct factorization gives
\begin{equation}\label{eq:uniform-negative-product-identity}
R_{q,x}(\theta)=\prod_{k=0}^\infty\left(1-\frac{\sin^2(\theta/2)}{\cosh^2(\kappa(x+k)/2)}\right)^{-1}.
\end{equation}
Each factor is at least $1$. If $0\leq\theta\leq\pi/2$, the quantity subtracted in each factor is at most $1/2$. Thus $-\log(1-z)\leq2z$ for $z\in[0,1/2]$ gives
\[
0\leq\log R_{q,x}(\theta)\leq2\sin^2(\theta/2)\sum_{k=0}^\infty\frac1{\cosh^2(\kappa(x+k)/2)}.
\]
Moreover,
\[
\sum_{k=0}^\infty\frac1{\cosh^2(\kappa(x+k)/2)}
=\sum_{k=0}^\infty\frac{4q^{x+k}}{(1+q^{x+k})^2}
\leq\frac{4q^x}{1-q}\leq\frac{C}{\kappa},
\]
where $C$ depends on $x$. Using $\sin(\theta/2)\leq\theta/2$ proves  \eqref{eq:R bound}.

Finally, $\sin^2(\theta_{y_n}/2)=y/(4n)$ and $\kappa_n=2/\sqrt n$. For fixed $y$, we have $\theta_{y_n}\leq\pi/2$ for all sufficiently large $n$, and the preceding estimates yield
\[
0\leq\log R_{q_n,x}(\theta_{y_n})\leq\frac{Cy}{n\kappa_n}=O(n^{-1/2})\to0.
\]
This proves the stated limit.
\end{proof}

We next collect a few results concerning the limits of $\wt\pi\topp n$ and $\wt P\topp n$.  Recall that we let $\rho(q^x) = (q^x+q^{-x})/2$, and set $\wt\rho_n(q^x) = 2n(1-\rho(q^x)) \to -4x^2$. In particular, we set 
\begin{align*}
u\topp n_{t,j} &:= \wt\rho_n(q^{u+t/2+j}) \to  u_{t,j} = -4\pp{u+\frac t2+j}^2,\\
v\topp n_{t,k} & :=\wt\rho_n(q^{v-t/2+k}) \to  v_{t,k} = -4\pp{v-\frac t2+k}^2,
\end{align*} 
as $n\to\infty$, for $j\in \llbracket 0,j_{u,t}\rrbracket $ and $k\in \llbracket 0,k_{v,t}\rrbracket$.
The limits are exactly the candidate atomic locations introduced in Section \ref{sec:CDH}.

\begin{lemma}\label{lem:marginal}
Let $(u,v)\in\mathcal D\cap\mathbb R^2$ satisfy
$u+v+2\notin\mathbb Z_{\leq0}$, and fix
$t\in\mathbb T_{u,v}\cup\{0\}$. 
For each fixed $y>0$, 
\begin{align}\label{eq:pi pointwise}
\limn n^{u+v}\wt\pi_t^{(n),c}(y)
&=
\Gamma(u+v+2)\mathfrak p_t^{u,v,c}(y),\\
\limn n^{u+v}\wt \pi_t^{(n),d}\pp{u\topp n_{t,j}} &= \Gamma(u+v+2)\mathfrak p_t^{u,v,d}(u_{t,j}),\quad j=0,\dots,j_{u,t}\label{eq:pi pointwise u}\\
\limn n^{u+v}\wt \pi_t^{(n),d}\pp{v\topp n_{t,k}}& = \Gamma(u+v+2)\mathfrak p_t^{u,v,d}(v_{t,k}),\quad k=0,\dots,k_{v,t}.\label{eq:pi pointwise v}
\end{align}
Moreover, with
\[
 J_n(y):=\left(1-\frac{y}{4n}\right)^{-1/2}, 
\]
there exists $C>0$ depending only on
$u,v,t$, such that
\begin{equation}\label{eq:marginal-uniform-bound}
|\wt\pi_t^{(n),c}(y)|
 \leq C n^{-(u+v)}\sqrt y\,e^{C\sqrt y}J_n(y),
 \quad  \mfa y\in(0,2n],
\end{equation}
for all $n$ large enough. 
\end{lemma}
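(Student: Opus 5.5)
The plan is to write $\wt\pi_t^{(n)}$ explicitly as the Askey--Wilson signed measure with parameters
\[
(a,b,c,d)=\pp{q^{v-t/2},\,-q^{1-t/2},\,q^{u+t/2},\,-q^{1+t/2}}.
\]
These are the parameters $(A_n\sqrt{s},B_n\sqrt s,C_n/\sqrt s,D_n/\sqrt s)$ with $s=e^{2t/\sqrt n}$, i.e.\ $\sqrt s=q^{-t/2}$. We then push forward under $y\mapsto 2n(1-y)$ and apply Lemmas \ref{lem:q1}, \ref{lem:q2} and \ref{lem:uniform-q-product-bounds} factor by factor.

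\textbf{The density.} Under $y_n=1-y/(2n)$ the Jacobian is $1/(2n)$, and
\[
\sqrt{1-y_n^2}\sim\sqrt{y/n}.
\]
The normalizing prefactor is $(q,ab,ac,ad,bc,bd,cd;q)_\infty/(abcd;q)_\infty$. Here $ab$, $ad$, $bc$, $cd$ are negative powers $-q^{\cdot}$, which are handled by \eqref{e:quneg}. The factor $ac=q^{u+v}$ contributes $(q;q)_\infty(1-q)^{1-u-v}/\Gamma(u+v)$. Since $bd=q^2$ and $abcd=q^{u+v+2}$, the ratio $(bd;q)_\infty/(abcd;q)_\infty$ gives $\Gamma(u+v+2)(1-q)^{u+v}$.

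The modulus factor $|(e^{2\i\theta})_\infty|^2/|(ae^{\i\theta},\dots)_\infty|^2$ is handled by \eqref{e:qutheta}, \eqref{e:quthetaneg} and \eqref{e:2theta}. Since $\sqrt n\theta_{y_n}\to\sqrt y$, we get $\Gamma(v-t/2\pm\i\sqrt y/2,\,u+t/2\pm\i\sqrt y/2)/|\Gamma(\i\sqrt y)|^2$.

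We then collect powers of $(1-q)\sim 2/\sqrt n$ and of $(q;q)_\infty$ and $(q^2;q^2)_\infty$, together with the powers of $2$ coming from the negative-parameter factors. These must cancel to leave exactly $n^{-(u+v)}\Gamma(u+v+2)/(8\pi\Gamma(u+v)\sqrt y)$. This bookkeeping (the exponents $1-z$ summed over the $\pm$ factors) is the main routine check. It is consistent with \eqref{eq:CK p}, which explains the factor $\Gamma(u+v+2)$.

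\textbf{The atoms.} For \eqref{eq:pi pointwise u} we use \eqref{eq:AW atoms 0}--\eqref{eq:AW atoms j} with $c=q^{u+t/2}$. Under the change of variables the location $\rho(cq^j)$ maps to $u^{(n)}_{t,j}$. In $p(\rho(c))$ we have
\[
(c^{-2};q)_\infty=(q^{-2u-t};q)_\infty,\qquad (a/c;q)_\infty=(q^{v-u-t};q)_\infty .
\]
Lemma \ref{lem:q1} turns these into $\Gamma(v-u-t)/\Gamma(-2u-t)$ times powers of $1-q$. The finite $q$-Pochhammers $(\cdot;q)_j$ become ordinary Pochhammers with powers of $(1-q)$; for example $(c^2;q)_j\sim(1-q)^j[2u+t]_j$. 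The factor $(q/abcd)^j\to1$, and the $(-q^\cdot)_j$ factors tend to $2^j$ and cancel. The result should be \eqref{eq:pd u}, with the same total power $n^{-(u+v)}$. Because $(u,v)\in\calD$ and $t\in\mathbb T_{u,v}\cup\{0\}$, none of the limiting Gamma arguments are poles, so Lemma \ref{lem:q1} applies. The case \eqref{eq:pi pointwise v} is symmetric, swapping $a$ and $c$.

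\textbf{The uniform bound.} This is the main obstacle. We use the exact $q$-Gamma identities \eqref{eq:qgamma-exact-single-angle} and \eqref{eq:qgamma-exact-double-angle} with $r=\sqrt n\theta_{y_n}$; for $y\le 2n$ we have $\theta_{y_n}\le\pi/2$ and $r\asymp\sqrt y$. The steps are:
\begin{enumerate}[(i)]
\item The numerator factors $|\Gamma_q(v-t/2+\i r/2)|^2$ and $|\Gamma_q(u+t/2+\i r/2)|^2$ are bounded uniformly by \eqref{eq:uniform-qgamma-upper}, using $u+t/2$, $v-t/2\notin\Z_{\le0}$.
\item The factor $|\Gamma_q(\i r)|^{-2}$ is bounded by $Cr^2e^{\pi r}$ via \eqref{eq:BKW}.
\item The two negative-parameter ratios $R_{q,x}(\theta)$ are bounded by $e^{C\theta^2/\kappa}$ via \eqref{eq:R bound}. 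Since $\theta^2\lesssim y/n$ and $\kappa=2/\sqrt n$, this gives $e^{C y/\sqrt n}\le e^{C'\sqrt y}$ on $y\le 2n$.
\item The Jacobian factor $1/\sqrt{1-y_n^2}$ equals $\sqrt{n/y}\,J_n(y)$ up to constants.
\item The $n$-dependent prefactors converge, so they are bounded by $Cn^{-(u+v)}$.
\end{enumerate}
Combining these gives $Cn^{-(u+v)}y^{-1/2}\cdot y\,e^{C\sqrt y}J_n(y)=Cn^{-(u+v)}\sqrt y\,e^{C\sqrt y}J_n(y)$. The delicate point is step (iii): making sure that the negative-parameter factors grow at most like $e^{C\sqrt y}$ and not faster, which is exactly why the estimate is restricted to $y\le 2n$.
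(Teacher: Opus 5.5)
Your proposal is correct and follows essentially the same route as the paper. Both use the same Askey--Wilson parameters $(q^{v-t/2},-q^{1-t/2},q^{u+t/2},-q^{1+t/2})$, take pointwise limits factor by factor via Lemmas~\ref{lem:q1} and~\ref{lem:q2}, and obtain the uniform bound from the exact $q$-Gamma rewriting combined with \eqref{eq:uniform-qgamma-upper}, \eqref{eq:BKW}, \eqref{eq:R bound} and $\sqrt y\le\sqrt n\,\theta_{y_n}\le\frac{\pi}{2}\sqrt y$ on $y\le 2n$.

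The one case you skip is $u+v\in\{0,-1\}$, which the hypotheses allow. There $(q^{u+v};q)_\infty=0$, so Lemma~\ref{lem:q1} cannot be applied to the factor $ac$; however, the density and its claimed limit both vanish identically, as the paper notes explicitly.
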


\begin{proof}
For $y\in[0,4n]$, the identity
$\sqrt y=2\sqrt n\sin(\theta_{y_n}/2)$
gives
\begin{equation}\label{eq:marginal-angle-comparison}
\sqrt y\leq\sqrt n\,\theta_{y_n}
\leq\frac{\pi}{2}\sqrt y.
\end{equation}
We now rewrite the continuous marginal density. By formula of the Askey--Wilson signed measures, after the change of variables (recall \eqref{eq:wt pi}) we have
\begin{align}\label{eq:wt pi^c}
\wt\pi_t^{(n),c}(y)
& =\frac{1}
{\displaystyle 4\pi n
 \sqrt{y/n}\sqrt{1-y/(4n)}}
\\
 & \quad\times
\frac{(q,-q^{1+v-t},q^{u+v},-q^{1+v},-q^{1+u},q^2,-q^{1+u+t};q)_\infty}{(q^{u+v+2};q)_\infty}
\nonumber
\\
& \quad\times
\frac{\abs{(e^{2\i\theta_{y_n}};q)_\infty}^2}
{\displaystyle\abs{
 (q^{v-t/2}e^{\i\theta_{y_n}},
  -q^{1-t/2}e^{-\i\theta_{y_n}},
  q^{u+t/2}e^{\i\theta_{y_n}},
  -q^{1+t/2}e^{\i\theta_{y_n}};q)_\infty
}^2}.\label{eq:3rd}
\end{align}
The factor
$(q^{u+v};q)_\infty$ makes this density identically zero when
$u+v\in\{0,-1\}$. Assume henceforth that
$u+v\notin\{0,-1\}$. 

The expression in \eqref{eq:3rd} equals (recalling \eqref{eq:qgamma-exact-single-angle} and \eqref{eq:qgamma-exact-double-angle})
\begin{multline*}
\frac{(1-q)^{2(u+v)-2}}{(q;q)_\infty^2}\times \frac{R_{q,1-t/2}(\theta_{y_n})
R_{q,1+ t/2}(\theta_{y_n})}{\abs{\Gamma_q\pp{\i\sqrt n\,\theta_{y_n}}}^2}
\\\times
\frac{\displaystyle
\abs{\Gamma_q\pp{v-\frac t2+\frac12\i\sqrt n\,\theta_{y_n},u+\frac t2+\frac12\i\sqrt n\,\theta_{y_n}}}^2}
{(-q^{1- t/2};q)_\infty^2
(-q^{1+t/2};q)_\infty^2}.
\end{multline*}
Here and below, we write $\Gamma_q(a,b) = \Gamma_q(a)\Gamma_q(b)$. 
Notice also
\[
\frac{(q,q^{u+v},q^2;q)_\infty}
{(q^{u+v+2};q)_\infty}
=(q;q)_\infty^2(1-q)
\frac{\Gamma_q(u+v+2)}{\Gamma_q(u+v)}.
\]
Combining these identities gives the following exact formula
for $0<y<4n$:
\begin{align}\label{eq:bounded1}
\wt\pi_t^{(n),c}(y)
&=\frac{\pp{\sqrt n(1-q)}^{2(u+v)-1}}
{\displaystyle 4\pi}
\frac{\Gamma_q(u+v+2)}{\Gamma_q(u+v)}
\frac{\displaystyle
(-q^{1+v-t},-q^{1+v},-q^{1+u},-q^{1+u+t};q)_\infty}
{\displaystyle
(-q^{1-t/2};q)_\infty^2
(-q^{1+ t/2};q)_\infty^2}
\\
&\quad\times
\abs{\Gamma_q\pp{v-\frac t2+\frac12\i\sqrt n\,\theta_{y_n},
u+\frac t2+\frac12\i\sqrt n\,\theta_{y_n}}}^2\label{eq:bounded2}
\\
&\quad\times
\frac{n^{-(u+v)}}{\sqrt y\sqrt{1-y/(4n)}}\times \frac{R_{q,1-t/2}(\theta_{y_n})
R_{q,1+ t/2}(\theta_{y_n})}{\abs{\Gamma_q\pp{\i\sqrt n\,\theta_{y_n}}}^2}.\label{eq:increasing}
\end{align}
By Lemmas~\ref{lem:q1} and
\ref{lem:q2}, we have \eqref{eq:pi pointwise}.

Moreover, the expressions on the right-hand side of \eqref{eq:bounded1} and in \eqref{eq:bounded2} are uniformly bounded in $n$. 
Indeed, Lemma~\ref{lem:q1} gives
\[
\frac{(-q^{1+v-t},-q^{1+v},-q^{1+u},
        -q^{1+u+t};q)_\infty}
     {(-q^{1-t/2};q)_\infty^2
      (-q^{1+t/2};q)_\infty^2}
\longrightarrow 2^{-2(u+v)}.
\]
Notice also $\limn\sqrt n(1-q) = 2$, and $v-t/2,u+t/2\notin\Z_{\leq0}$. Thus \eqref{eq:uniform-qgamma-upper} yields
\[
\abs{\Gamma_q\pp{v-\frac t2+
      \frac12\i\sqrt n\,\theta_{y_n},u+\frac t2+
      \frac12\i\sqrt n\,\theta_{y_n}}}^2
\leq C,
\]
uniformly in $n$ and $y\in[0,4n]$.

Next, applying \eqref{eq:R bound} and \eqref{eq:BKW}  gives (here we need to restrict $y\in[0,2n]$)
\[
\frac{R_{q,1-t/2}(\theta_{y_n})
      R_{q,1+t/2}(\theta_{y_n})}
     {\abs{\Gamma_q\pp{\i\sqrt n\,\theta_{y_n}}}^2}
\leq
Cn\theta_{y_n}^2e^{C\sqrt n\,\theta_{y_n}} \le Cye^{C\sqrt y},
\]
where in the last step we recalled \eqref{eq:marginal-angle-comparison}.
In summary, we have established 
\[
\abs{\wt\pi_t^{(n),c}(y)}
\leq Cn^{-(u+v)}
\frac{y
      e^{C\sqrt y}}
     {\displaystyle\sqrt y\sqrt{1-y/(4n)}}
\leq Cn^{-(u+v)}
\sqrt y\,e^{C\sqrt y}J_n(y)
,
\quad\mfa y\in[0,2n].
\]
This proves \eqref{eq:marginal-uniform-bound}.

The calculation of \eqref{eq:pi pointwise u} and \eqref{eq:pi pointwise v} is straightforward; notice that we do not need to establish a bound as for $\wt\pi_t^{(n),c}$ here. The details are omitted. 
\end{proof}

\begin{lemma}\label{lem:transitional}
    Let $(u,v)\in\mathcal D\cap\mathbb R^2$ and let $s,t\in\mathbb T_{u,v}$ satisfy $s<t<2v$. Then
\begin{align}
\lim_{n\to\infty}
\wt P^{(n),c,c}_{s,t} \pp{x,y}& = \mathfrak p_{s,t}^{u,v,c,c} \pp{x,y},\nonumber
\\
\lim_{n\to\infty}
\wt P^{(n),d,c}_{s,t} \pp{u^{(n)}_{s,j},y}& = \mathfrak p_{s,t}^{u,v,d,c} \pp{u_{s,j},y}, \quad j\in\llbracket0,j_{u,s}\rrbracket,\nonumber
\\
\lim_{n\to\infty}
\wt P^{(n),d,d}_{s,t} \pp{u^{(n)}_{s,j},u^{(n)}_{t,j'}} &= \mathfrak p_{s,t}^{u,v,d,d} \pp{u_{s,j},u_{t,j'}}, \quad j\in\llbracket 0, j_{u,s}\rrbracket ,\quad j'\in \llbracket j,  j_{u,t}\rrbracket.\label{eq:wt P d,d conv}
\end{align}
Moreover, there exists $C>0$ depending only on $u,v,s,t$, such that
\begin{align}
\left|\wt P_{s,t}^{(n),c,c}(x,y)\right|
&\leq C\sqrt y\,e^{C(\sqrt x+\sqrt y)}J_n(y),
&&0<x,y\leq2n, \label{eq:transition-cc-uniform}\\
\left|\wt P_{s,t}^{(n),d,c}
       \bigl(u_{s,j}^{(n)},y\bigr)\right|
&\leq C\sqrt y\,e^{C\sqrt y}J_n(y),
&&0<y\leq2n,\quad
j\in\llbracket0,j_{u,s}\rrbracket,
\label{eq:transition-dc-uniform}
\end{align}
for all $n$ large enough.
\end{lemma}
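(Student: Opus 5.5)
The plan mirrors the proof of Lemma \ref{lem:marginal}. First I would write each rescaled kernel exactly in terms of $q$-Gamma functions. The transition kernel $P^{(n)}_{e^{2s/\sqrt n},e^{2t/\sqrt n}}(x,\cdot)$ is the Askey--Wilson signed measure with parameters $(\mathsf a,\mathsf b,\mathsf c,\mathsf d)=(q^{v-t/2},-q^{1-t/2},q^{(t-s)/2}e^{\pm \i\theta_x})$. After the change of variables $y\mapsto 1-y/(2n)$, the continuous density carries the Jacobian $1/(4n\sqrt{y/n}\sqrt{1-y/(4n)})$.

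\textbf{Continuous-to-continuous kernel.} For $x\in(0,2n]$ we have $x_n=1-x/(2n)\in(-1,1)$. The numerator $(q,\mathsf a\mathsf b,\mathsf a\mathsf c,\mathsf a\mathsf d,\mathsf b\mathsf c,\mathsf b\mathsf d,\mathsf c\mathsf d;q)_\infty$ is converted to $q$-Gamma form via \eqref{eq:q Gamma}. This gives:
\begin{itemize}
\item $|\Gamma_q(v-\tfrac t2+\tfrac{s}{2}... )|$-type factors, more precisely $|(q^{v-s/2}e^{\i\theta_{x_n}};q)_\infty|^2$, which goes to the denominator $\Gamma(v-\frac s2\pm\i\frac{\sqrt x}2)$ after conversion;
\item the factor $(q^{t-s};q)_\infty$, which gives $\Gamma(t-s)$;
\item negative-base factors $(-q^{1-s/2}e^{\pm\i\theta_{x_n}};q)_\infty$, which are handled through $R_{q,\cdot}$.
\end{itemize}
The denominator involves $|(e^{2\i\theta_{y_n}};q)_\infty|^2$, the factors $|(q^{v-t/2}e^{\i\theta_{y_n}};q)_\infty|^2$ and $|(-q^{1-t/2}e^{\i\theta_{y_n}};q)_\infty|^2$, and the cross term $|(q^{(t-s)/2}e^{\i(\theta_{x_n}\pm\theta_{y_n})};q)_\infty|^2$. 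Lemmas \ref{lem:q1} and \ref{lem:q2} then give the pointwise limits. Equation \eqref{e:plustheta} handles the cross term; when $x=y$ it applies because $(t-s)/2>0$. The powers of $(1-q)$ and $\sqrt n$ should cancel exactly, since the total exponent is zero for a kernel of total mass one. This cancellation is where I would check the bookkeeping.

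\textbf{Discrete starting points.} For $x=u^{(n)}_{s,j}$, the parameters $\mathsf c,\mathsf d$ become the real numbers $q^{u+j+t/2}$ and $q^{t-s-u-j-t/2}$. The density then has $\Gamma_q$ factors at real arguments, namely $\Gamma_q(\frac{t-s}2\pm(u+j+\frac s2)\pm\ldots)$, and \eqref{eq:transitional dc} follows in the same way. The atomic masses come from \eqref{eq:AW atoms 0} and \eqref{eq:AW atoms j} with $c$ replaced by $\mathsf c=q^{u+j-t/2}$ and $j$ replaced by $j'-j$. Each ratio $(q^a;q)_\infty/(q^b;q)_\infty\sim(1-q)^{b-a}\Gamma(b)/\Gamma(a)$ and each finite $q$-Pochhammer $(q^a;q)_m\sim(1-q)^m[a]_m$. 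The factor $(q/(\mathsf a\mathsf b\mathsf c\mathsf d))^{j'-j}$ has negative $\mathsf b\mathsf d$-type contributions that produce the sign $(-1)^{j'-j}$. I would check that this recovers the displayed formula for $\mathfrak p^{u,v,d,d}_{s,t}$, in the rewritten form used in Lemma \ref{lem:H-factorization}. This part is routine algebra.

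\textbf{Uniform bounds.} These are the main obstacle.
\begin{itemize}
\item In \eqref{eq:transition-cc-uniform}, the denominator factors at real parts $v-t/2>0$ and $u$-type arguments are bounded above by \eqref{eq:uniform-qgamma-upper}. Here I use that $s,t\in\mathbb T_{u,v}$ keeps the arguments off $\Z_{\le0}$ after the $\Gamma_q$ factors are moved to the correct side.
\item The $1/|\Gamma_q(v-\frac s2\pm\ldots)|^2$ factor in $x$ is bounded by $Ce^{C\sqrt n\theta_{x_n}}\le Ce^{C\sqrt x}$, using \eqref{eq:uniform-qgamma-inverse} and \eqref{eq:marginal-angle-comparison}.
\item The cross term $|\Gamma_q(\frac{t-s}2+\frac{\i}{2}\sqrt n(\theta_{x_n}\pm\theta_{y_n}))|^2$ is bounded by a constant, again by \eqref{eq:uniform-qgamma-upper} since $(t-s)/2>0$.
\item $1/|\Gamma_q(\i\sqrt n\theta_{y_n})|^2\le Cye^{C\sqrt y}$ by \eqref{eq:BKW}.
\item The $R$-factors are at most $e^{C\theta^2/\kappa}\le e^{Cy/\sqrt n}\le e^{C\sqrt y}$ for $y\le 2n$, and similarly in $x$.
\item The negative-base ratios converge to constants by Lemma \ref{lem:q1}.
\end{itemize}
Combining these with the Jacobian gives $C\sqrt y\,e^{C(\sqrt x+\sqrt y)}J_n(y)$. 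The $d,c$ bound is the same computation with the $x$-dependence frozen at the finitely many atoms, so those factors are constants.
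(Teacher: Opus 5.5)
Your proposal is correct and follows essentially the same route as the paper: you write the rescaled kernels exactly in $q$-Gamma form, take pointwise limits via Lemmas~\ref{lem:q1}--\ref{lem:q2}, and obtain the uniform bounds from the upper $\Gamma_q$ bound, the inverse $\Gamma_q$ bounds at real part $v-s/2>0$ and on the imaginary axis, and the $R$-factor estimate, with the net prefactor $(\sqrt n(1-q))^{-1}\to 1/2$ coming out of the bookkeeping. There are only minor slips: $|(e^{2\i\theta_{y_n}};q)_\infty|^2$ sits in the numerator of the Askey--Wilson density (your later bound on $1/|\Gamma_q(\i\sqrt n\theta_{y_n})|^2$ is placed correctly); the third parameter in the atom-to-atom computation is $q^{u+j+t/2}$, not $q^{u+j-t/2}$; and on the $x$ side the $R$-factor appears in the denominator, so only $R_{q,1-s/2}\ge 1$ is needed there.
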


\begin{remark}\label{rem:DCT}
For the dominated convergence arguments below, we establish pointwise convergence and uniform upper bounds separately. Dominated convergence was also needed in two closely related recent works involving $q$-Pochhammer asymptotics as $q\uparrow1$, and our treatment is simpler here. In particular, we avoid using the quantitative error estimates obtained from the $q$-Pochhammer asymptotics in \citet[Proposition~2.3, Lemmas~8.3 and 8.5]{corwin24stationary}, and we also do not require the exponential decay bounds for the $q$-Gamma function established in \citet[Lemmas~5.4 and~5.6]{bryc25limits}. Our inverse $q$-Gamma bound \eqref{eq:BKW}, however, is of the same form as \cite[Lemma~5.7]{bryc25limits}.

The elementary product estimates in Lemma~\ref{lem:uniform-q-product-bounds} yield the bounds in Lemmas~\ref{lem:marginal} and~\ref{lem:transitional} on the region where every continuous coordinate belongs to $(0,2n]$. The key here is to restrict every coordinate $y_k$ to this interval (more precisely, to $(-\infty,2n]$ but $(-\infty,0)$ is easy to control too) in the integral representation of the finite-dimensional Laplace transform. When any coordinate $y_k$ exceeds $2n$, the Laplace weight is bounded in absolute value by $Ce^{-an}$ for some $a>0$. As shown below, the integral of the absolute value of the remaining factors grows at most subexponentially in $n$, so the corresponding contribution vanishes as $n\to\infty$.
\end{remark}

\begin{proof}
The pointwise convergence results  follow from straightforward calculations. We only prove the uniform control.  
For $0<y\leq2n$, we have
$\theta_{y_n}\leq\pi/2$. Thus \eqref{eq:BKW}, \eqref{eq:R bound}, and \eqref{eq:marginal-angle-comparison}
give, for every fixed $x\in\R$,
\begin{equation}\label{eq:transition-angular-y}
\frac{R_{q,x}(\theta_{y_n})}
     {|\Gamma_q(\i\sqrt n\,\theta_{y_n})|^2}
\leq Cn\theta_{y_n}^2e^{C\sqrt n\,\theta_{y_n}}
\leq Cy e^{C\sqrt y}.
\end{equation}
For $\wt P_{s,t}^{(n),c,c}$, similarly as for $\wt\pi_t^{(n),c}$ we have
\begin{align}
\wt P_{s,t}^{(n),c,c}(x,y)
&=
\frac{J_n(y)}
 {4\pi\sqrt n(1-q)\sqrt y\,\Gamma_q(t-s)}
\frac{(-q^{v+1-t};q)_\infty
      (-q^{1-s/2};q)_\infty^2}
     {(-q^{v+1-s};q)_\infty
      (-q^{1-t/2};q)_\infty^2}
\label{eq:wt P^c,c}\\
&\quad\times
\left|\Gamma_q\left(
 v-\frac t2+\frac{\i\sqrt n\,\theta_{y_n}}2
,
 \frac{t-s}{2}
 +\frac{\i\sqrt n}{2}(\theta_{x_n}\pm\theta_{y_n})
 \right)
\right|^2
\label{eq:num Gamma}\\
&\quad\times
\frac1{\displaystyle\left|\Gamma_q\left(
 v-\frac s2+\frac{\i\sqrt n\,\theta_{x_n}}2
 \right)\right|^2}
\frac{R_{q,1-t/2}(\theta_{y_n})}
 {R_{q,1-s/2}(\theta_{x_n})
  |\Gamma_q(\i\sqrt n\,\theta_{y_n})|^2}.
\label{eq:transition-cc-exact-gamma}
\end{align}
Here, we write $\Gamma_q(a,b\pm c) =\Gamma_q(a)\Gamma_q(b+c)\Gamma_q(b-c)$. 
By Lemma~\ref{lem:q1}, the quotient of negative
$q$-Pochhammer products tends to $1$. Also
$\sqrt n(1-q)\to2$, and $\Gamma_q(t-s)$ is bounded
away from zero. The Gamma factors in \eqref{eq:num Gamma} are uniformly bounded thanks to \eqref{eq:uniform-qgamma-upper}. Since $v-s/2>0$,
\eqref{eq:uniform-qgamma-inverse} gives
\[
\left|\Gamma_q\left(
v-\frac s2+\frac{\i\sqrt n\,\theta_{x_n}}2
\right)\right|^{-2}
\leq Ce^{C\sqrt n\,\theta_{x_n}}
\leq Ce^{C\sqrt x}.
\]
Finally, $R_{q,1-s/2}(\theta_{x_n})\geq1$.
Applying \eqref{eq:transition-angular-y} to
\eqref{eq:transition-cc-exact-gamma} proves
\eqref{eq:transition-cc-uniform}.

Now fix $j\in\llbracket0,j_{u,s}\rrbracket$. If $u+v+j\in\Z_{\leq0}$, then $(q^{u+v+j};q)_\infty=0$, so the discrete-to-continuous density vanishes. Otherwise, we have
\begin{align}
\wt P_{s,t}^{(n),d,c}
       \bigl(u_{s,j}^{(n)},y\bigr)
&=
\frac{J_n(y)}
 {4\pi\sqrt n(1-q)\sqrt y\,
  \Gamma_q(u+v+j,v-u-j-s,t-s)}
\nonumber\\
&\quad\times
\frac{(-q^{v+1-t},-q^{1+u+j},
             -q^{1-u-j-s};q)_\infty}
     {(-q^{v+1-s};q)_\infty
      (-q^{1-t/2};q)_\infty^2}
\label{eq:wt P^d,c}\\
&\quad\times
\left|
\Gamma_q\left(v-\frac t2+
              \frac{\i\sqrt n\,\theta_{y_n}}2,u+j+\frac t2+
              \frac{\i\sqrt n\,\theta_{y_n}}2,\frac t2-u-j-s+
              \frac{\i\sqrt n\,\theta_{y_n}}2\right)
\right|^2\nonumber\\
&
\quad \times \frac{R_{q,1-t/2}(\theta_{y_n})}
     {|\Gamma_q(\i\sqrt n\,\theta_{y_n})|^2}.
\label{eq:transition-dc-exact-gamma}
\end{align}
One checks readily again that the terms to be controlled are $J_n(y)/(\sqrt n(1-q))$ and the  last factor in \eqref{eq:transition-dc-exact-gamma}. Applying \eqref{eq:transition-angular-y} to \eqref{eq:transition-dc-exact-gamma} proves \eqref{eq:transition-dc-uniform}. 

It remains to prove \eqref{eq:wt P d,d conv}.
Fix $j\in\llbracket0,j_{u,s}\rrbracket$ and $j'\in\llbracket j,j_{u,t}\rrbracket$. The four Askey--Wilson parameters of the transition kernel starting from $u_{s,j}^{(n)}$ are
\[
\pp{q^{v-t/2},-q^{1-t/2},q^{u+j+t/2},q^{t/2-u-j-s}}.
\]
The target $u_{t,j'}^{(n)}$ corresponds to the $(j'-j)$-th atom associated with the third parameter. Applying \eqref{eq:AW atoms 0}--\eqref{eq:AW atoms j} and the $q$-Gamma identity gives
\begin{align}
\wt P_{s,t}^{(n),d,d}\pp{u_{s,j}\topp n,u_{t,j'}\topp n}
&= \frac{\Gamma_q(v-u-j-t,-2u-2j-s)}{\Gamma_q(v-u-j-s,-2u-2j-t)}
\frac{(-q^{v+1-t},-q^{1-u-j-s};q)_\infty}{(-q^{v+1-s},-q^{1-u-j-t};q)_\infty}\nonumber\\
&\quad\times \frac{(q^{t+2u+2j},q^{u+v+j},-q^{1+u+j},q^{t-s};q)_{j'-j}}{(q,q^{1+t+j+u-v},-q^{t+u+j},q^{1+2u+2j+s};q)_{j'-j}}
\frac{1-q^{t+2u+2j'}}{1-q^{t+2u+2j}}(-q^{s-v})^{j'-j}.
\label{eq:wt P^d,d}
\end{align}
By Lemma~\ref{lem:q1}, the quotient of negative infinite $q$-Pochhammer products tends to $1$. Using also
\[
\lim_{q\uparrow1}\frac{(q^a;q)_m}{(1-q)^m}=[a]_m,\quad \lim_{q\uparrow1}(-q^a;q)_m=2^m,\quad a\in\R,\quad m\in\Z_{\ge0},
\]
we obtain
\begin{align*}
\limn\wt P_{s,t}^{(n),d,d}\pp{u_{s,j}\topp n,u_{t,j'}\topp n}
&= \frac{\Gamma(v-u-j-t,-2u-2j-s)}{\Gamma(v-u-j-s,-2u-2j-t)}\\
&\quad\times \frac{[t+2u+2j,u+v+j,t-s]_{j'-j}}{[1,1+t+j+u-v,1+2u+2j+s]_{j'-j}}
\frac{t+2u+2j'}{t+2u+2j}(-1)^{j'-j}\\
&= \mathfrak p_{s,t}^{u,v,d,d}(u_{s,j},u_{t,j'}).
\end{align*}
For $j'\in\llbracket0,j_{u,t}\rrbracket$ with $j'<j$, both the prelimit and limiting masses are zero.\end{proof}

\subsection{Convergence of the denominator}
\label{sec:denomconv}
Recall \[
Z_n  =\int_\R\pp{1-\frac y{4n}}^n\wt\pi_0\topp n(\d y).
\]
\begin{lemma}\label{lem:Zn}
Let $(u,v)\in\mathcal D\cap\mathbb R^2$ satisfy
$u+v+2\notin\mathbb Z_{\leq0}$. Then
\[
\lim_{n\to\infty}n^{u+v}Z_n
=\Gamma(u+v+2)\int_{\mathbb R}e^{-y/4}
  \mathfrak p_0^{u,v}(\d y)
=\Gamma(u+v+2)K_{u,v}.
\]
\end{lemma}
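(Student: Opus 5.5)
We split $Z_n$ according to the support of $\wt\pi_0\topp n$, which (after the change of variables $y\mapsto 2n(1-y)$) consists of an absolutely continuous part on $(0,4n)$, finitely many atoms at $u\topp n_{0,j}$, $j\in\llbracket0,j_{u,0}\rrbracket$, finitely many atoms at $v\topp n_{0,k}$, $k\in\llbracket0,k_{v,0}\rrbracket$, and possibly atoms associated with $B_n=D_n=-q$, located near $y=4n$. At $t=0$ the parameter $D_n q^{0}=-q$ has modulus less than one, so no negative atoms occur. We therefore write
\[
n^{u+v}Z_n=n^{u+v}\int_0^{4n}\pp{1-\frac y{4n}}^n\wt\pi_0^{(n),c}(y)\d y+\sum_{j}\pp{1-\frac{u\topp n_{0,j}}{4n}}^n n^{u+v}\wt\pi_0^{(n),d}(u\topp n_{0,j})+\sum_k(\cdots),
\]
and treat the three pieces separately.

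The atomic sums are finite and have a fixed number of terms. Since $u\topp n_{0,j}\to u_{0,j}$, we have $(1-u\topp n_{0,j}/(4n))^n\to e^{-u_{0,j}/4}$. Together with \eqref{eq:pi pointwise u} and \eqref{eq:pi pointwise v} of Lemma \ref{lem:marginal}, each atomic term converges to $\Gamma(u+v+2)e^{-y/4}\mathfrak p_0^{u,v,d}(y)$ evaluated at the corresponding limiting atom. For the continuous part, we split the integral into $(0,2n]$ and $(2n,4n)$.

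On $(0,2n]$, we use pointwise convergence \eqref{eq:pi pointwise} together with $(1-y/(4n))^n\to e^{-y/4}$. For domination, note that $(1-y/(4n))^n\le e^{-y/4}$, and \eqref{eq:marginal-uniform-bound} gives $n^{u+v}|\wt\pi_0^{(n),c}(y)|\le C\sqrt y e^{C\sqrt y}J_n(y)$ with $J_n(y)\le\sqrt2$ on $(0,2n]$. The bound $\sqrt y\,e^{C\sqrt y-y/4}$ is integrable on $(0,\infty)$, so dominated convergence gives the limit $\Gamma(u+v+2)\int_0^\infty e^{-y/4}\mathfrak p_0^{u,v,c}(y)\d y$.

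On $(2n,4n)$, the weight satisfies $(1-y/(4n))^n\le 2^{-n}$. It then suffices to show that $n^{u+v}\int_{2n}^{4n}|\wt\pi_0^{(n),c}(y)|\d y$ grows at most subexponentially. For this we use the exact formula \eqref{eq:bounded1}--\eqref{eq:increasing}, which in the original variable $x=1-y/(2n)\in(-1,0)$ is just the Askey--Wilson density. That density has total variation mass that is polynomial in $n$: it is a bounded product of $q$-Pochhammer ratios times $1/\sqrt{1-x^2}$, and the relevant $q$-Pochhammer factors are controlled by $(q;q)_\infty^{-2}$-type quantities of order $e^{O(\sqrt n)}$. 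The bound $e^{O(\sqrt n)}2^{-n}\to0$ suffices.

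The main obstacle is this last crude bound on $(2n,4n)$, where \eqref{eq:R bound} no longer applies since $\theta\in(\pi/2,\pi)$. The approach is to bound $|(e^{2\i\theta};q)_\infty|^2\le (-q;q)_\infty^4$ trivially, and to bound the denominator factors from below via $|(ae^{\i\theta};q)_\infty|\ge (|a|;q)_\infty$ for $|a|<1$. The factors $q^{v},q^{u}$ with exponent possibly $\le0$ are handled by $\Gamma_q$ bounds in \eqref{eq:uniform-qgamma-upper}, and the resulting bound is $e^{O(\sqrt n)}$ by Lemma \ref{lem:q1}.

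Summing the three limits gives $\Gamma(u+v+2)\int_\R e^{-y/4}\mathfrak p_0^{u,v}(\d y)=\Gamma(u+v+2)K_{u,v}$ by \eqref{eq:CDH marginal} and \eqref{eq:K_u,v}. The cases $u+v\in\{0,-1\}$, where the continuous density vanishes identically, need only the atomic part.
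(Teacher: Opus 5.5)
Your proposal is correct and follows essentially the same route as the paper. You use the same three-way split: the atoms (pointwise limits from Lemma~\ref{lem:marginal}), the region $(0,2n]$ (dominated convergence via \eqref{eq:marginal-uniform-bound} and $J_n\le\sqrt2$), and the region $(2n,4n)$, where a subexponential bound on the density is beaten by the weight $(1-y/(4n))^n\le 2^{-n}$.

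The only difference is bookkeeping in the last region. The paper bounds $R_{q,1}(\theta)\le e^{C\sqrt n}$ by a Riemann-sum comparison and uses \eqref{eq:BKW} to get the pointwise bound $C\sqrt n\,e^{C\sqrt n}J_n(y)$. You instead bound each factor of the raw Askey--Wilson density by $e^{O(\sqrt n)}$ and integrate $1/\sqrt{1-x^2}$. Note that the $e^{O(\sqrt n)}$ size of $(q;q)_\infty^{-1}$ and $(-q;q)_\infty$ does not follow from Lemma~\ref{lem:q1}; it comes from that same elementary Riemann-sum comparison. Your ``polynomial in $n$'' remark is not needed.
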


\begin{proof}
Since $B_n=D_n=-q$, the measure $\wt\pi_0^{(n)}$ has no
atoms larger than $4n$. The positive atoms are caused by $C_n$ and possibly $A_n$.  Under the rescaling $y=2n(1-z)$, its atoms are precisely the candidate locations $u_{0,j}^{(n)}$ and $v_{0,k}^{(n)}$, all of which are negative. Consequently,
\begin{align*}
n^{u+v}Z_n
={}&\int_0^{2n}
 \left(1-\frac{y}{4n}\right)^n
 n^{u+v}\wt\pi_0^{(n),c}(y)\,\d y\\
&+\int_{2n}^{4n}
 \left(1-\frac{y}{4n}\right)^n
 n^{u+v}\wt\pi_0^{(n),c}(y)\,\d y +\int_{(-\infty,0)}
 \left(1-\frac{y}{4n}\right)^n
 n^{u+v}\wt\pi_0^{(n)}(\d y).
\end{align*}

We first look at the integral over $[0,2n]$. Assume $u+v\notin\{0,-1\}$ (otherwise the density is zero).
On $0<y\leq2n$, Lemma~\ref{lem:marginal}
and $J_n(y)\leq\sqrt2$ imply
\[
\left|
\left(1-\frac{y}{4n}\right)^n
n^{u+v}\wt\pi_0^{(n),c}(y)
\right|
\leq C\sqrt y\,
   \exp\left(C\sqrt y-\frac y4\right).
\]
The dominated convergence theorem then implies that 
\begin{equation}\label{eq:denom-near-limit}
\lim_{n\to\infty}\int_0^{2n}
 \left(1-\frac{y}{4n}\right)^n
 n^{u+v}\wt\pi_0^{(n),c}(y)\,\d y
=
\Gamma(u+v+2)\int_0^\infty
 e^{-y/4}\mathfrak p_0^{u,v,c}(y)\,\d y.
\end{equation}

We next control $y\in(2n,4n)$. Write
$\theta=\theta_{y_n}\in(\pi/2,\pi)$. The product
representation \eqref{eq:uniform-negative-product-identity}
gives
\begin{align*}
\log R_{q,1}(\theta)&= \log \pp{\prodd m1\infty\frac{(1+q^m)^2}{|1+q^me^{\i \theta}|^2}}
\leq
\log \pp{\prodd m1\infty\frac{(1+q^m)^2}{|1-q^m|^2}}
\\
&=
2\sum_{m=1}^{\infty}
 \log\frac{1+e^{-2m/\sqrt n}}{1-e^{-2m/\sqrt n}}
\leq
\sqrt n\int_0^\infty
 \log\frac{1+e^{-r}}{1-e^{-r}}\,\d r
\leq C\sqrt n.
\end{align*}
The integral is finite. Assume again $u+v\notin\{0,-1\}$, and the factors in
\eqref{eq:bounded1} and \eqref{eq:bounded2} are uniformly bounded, by
Lemmas \ref{lem:q1} and \ref{lem:uniform-q-product-bounds}. Using
\eqref{eq:BKW}, $\theta\leq\pi$, and the preceding bound
on $R_{q,1}$ in \eqref{eq:increasing}, we obtain
\[
\left|n^{u+v}\wt\pi_0^{(n),c}(y)\right|
\leq C\sqrt n\,e^{C\sqrt n}J_n(y),
\quad 2n<y<4n.
\]
Since $(1-y/(4n))^n\le e^{-y/4}\le e^{-n/2}$ on this interval,  and
$\int_{2n}^{4n}J_n(y)\,\d y=O(n)$, it follows that
\begin{equation}\label{eq:denom-far-limit}
\left|\int_{2n}^{4n}
 \left(1-\frac{y}{4n}\right)^n
 n^{u+v}\wt\pi_0^{(n),c}(y)\,\d y\right|
\leq Cn^{3/2}e^{C\sqrt n}e^{-n/2}\to 0.
\end{equation}

It remains to consider the atoms. Note that the total number of atoms is finite and hence pointwise convergence suffices. By Lemma \ref{lem:marginal} and the fact that $u_{0,j}^{(n)}\to u_{0,j}$ and $v_{0,k}^{(n)}\to v_{0,k}$, and the fact that only a finite number of atoms are involved, we have 
\begin{equation}\label{eq:denom-atomic-limit}
\limn\int_{(-\infty,0)}
 \left(1-\frac{y}{4n}\right)^n
 n^{u+v}\wt\pi_0^{(n)}(\d y)
=
\Gamma(u+v+2)\int_{(-\infty,0)}
 e^{-y/4}\mathfrak p_0^{u,v}(\d y).
\end{equation}
Combining \eqref{eq:denom-near-limit},
\eqref{eq:denom-far-limit}, and
\eqref{eq:denom-atomic-limit} proves the lemma.
\end{proof}

\subsection{Convergence of the numerator}\label{sec:numconv}
 Define
\equh\label{eq:G}
G(\vv x,\vv c):=\frac14\sum_{k=1}^d s_k^2(x_k-x_{k-1}).
\eque
Note that 
\[
e^{G(\vvx,\vvc)} = \esp\exp\pp{-\summ j1d \frac{c_j}{\sqrt 2}\B_{x_j}}.
\]
\begin{proposition}\label{prop:Phin}
Let $(u,v)\in\mathcal D\cap\mathbb R^2$ satisfy
$u+v+2\notin\mathbb Z_{\leq0}$. Fix $d\in\mathbb Z_{>0}$,
$0=x_0<x_1<\cdots<x_d=1$, and assume $\vvc$ as before with $c_j>0, j=1,\dots,d$, and $\summ j1{d-1}c_j<1$. Assume 
\[
-s_d<\min\{0,2v\},
\]
$-s_k\in\mathbb T_{u,v}$ for $k\in\llbracket d\rrbracket$.
Then
\[
\lim_{n\to\infty}n^{u+v}\Phi^{(n)}(\vvx,\vvc)
=
\Gamma(u+v+2)e^{G(\vvx,\vvc)}
\Psi_{u,v}(\vvx,\vvc).
\]
\end{proposition}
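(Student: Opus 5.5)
We will use the pre-limit path decomposition of Proposition \ref{prop:openWASEP path} and show that each term converges to the matching term of Proposition \ref{prop:path decomp}. Fix $i,j$ with $u\in\C_{(s_{j+1,i}/2,s_{j,i}/2)}$. Then it suffices to prove three things:
\[
\limn n^{u+v}\Phi^{(n),-}=0,\qquad \limn n^{u+v}\Phi^{(n)}_\emptyset=\Gamma(u+v+2)e^{G}H_\emptyset(u),\qquad \limn n^{u+v}\Phi^{(n)}_\vvr=\Gamma(u+v+2)e^{G}H_\vvr(u)
\]
for each admissible $\vvr$. Summing these and invoking \eqref{eq:path decomp} gives the claim.

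The first step is to expand the weight. Writing $\cosh(s_k/\sqrt n)=1+s_k^2/(2n)+O(n^{-2})$, we get
\[
W_n(\vvy)=\prod_k\pp{1+\frac{s_k^2}{4n}-\frac{y_k}{4n}+O(n^{-2})}^{n_k-n_{k-1}}.
\]
Hence $W_n(\vvy)\to e^{G(\vvx,\vvc)}e^{-\sum_k(x_k-x_{k-1})y_k/4}$ locally uniformly in $\vvy$. We also need a bound: for $y_k\in(0,2n]$ (and for the finitely many negative atomic coordinates), $|W_n(\vvy)|\le Ce^{-\sum_k (x_k-x_{k-1})y_k/4}$. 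If some coordinate lies in $(2n,4n)$ with the path otherwise in range, the factor is at most $Ce^{-an}$.

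For the atomic prefix we use \eqref{eq:pi pointwise u} and \eqref{eq:wt P d,d conv}, since only finitely many atoms occur and pointwise convergence suffices. For the step from discrete to continuous we use the $d,c$ kernel limits, and for the continuous part we use the $c,c$ kernel limits, all from Lemmas \ref{lem:marginal} and \ref{lem:transitional}. Dominated convergence on $(0,2n]^{d-\ell}$ then applies with the dominating function
\[
\prod_k C\sqrt{y_k}\,e^{C\sqrt{y_k}+C\sqrt{y_{k-1}}}e^{-(x_k-x_{k-1})y_k/4}.
\]
This function is integrable because each $x_k-x_{k-1}>0$, and $J_n\le\sqrt2$ on this range. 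The factor $n^{u+v}$ is absorbed entirely by the marginal (or first atomic) mass, since the kernels converge without renormalisation. This yields $\Gamma(u+v+2)e^{G}H_\vvr(u)$ and $\Gamma(u+v+2)e^{G}H_\emptyset(u)$.

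The main obstacle is controlling the regions excluded above: paths with some continuous coordinate in $(2n,4n)$, and $\Phi^{(n),-}$ itself.

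For coordinates in $(2n,4n)$, we follow the denominator argument. Using $\log R_{q,x}(\theta)\le C\sqrt n$ and \eqref{eq:BKW} with $\theta\le\pi$, each density or kernel factor is bounded by $C\sqrt n\,e^{C\sqrt n}J_n(y)e^{C(\sqrt x+\sqrt y)}$. The integral of the product of these bounds over $(0,4n)^{d}$ is $e^{O(\sqrt n)}$. Meanwhile $W_n$ on this set is at most $e^{-an}$, so this contribution vanishes.

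For $\Phi^{(n),-}$, the paths have a negative-atom prefix near $y=4n$, at $\mathsf n\topp n_{\cdot,j}$ after rescaling. At those coordinates the weight factor is
\[
\frac{\cosh(s_k/\sqrt n)+\rho(-q^{j+1-s_k/2})}{2}=O(n^{-1}),
\]
so $W_n$ contains a factor $O(n^{-1})^{n_k-n_{k-1}}$, which is super-exponentially small. The atomic masses, by Lemma \ref{lem:q1} applied to the negative-parameter products together with \eqref{eq:AW atoms 0}--\eqref{eq:AW atoms j}, grow at most like $e^{O(\sqrt n)}$ times powers of $n$. Combined with the subexponential bounds on the continuous tails, this gives $n^{u+v}\Phi^{(n),-}\to0$. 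I expect verifying these crude bounds on the negative-atom masses and transitions to be the most tedious step.
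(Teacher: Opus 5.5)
Your proposal is correct in outline and follows the paper's proof. It uses the same decomposition \eqref{eq:prelimit path decomp} and termwise pointwise limits from Lemmas~\ref{lem:marginal} and~\ref{lem:transitional}, with $n^{u+v}$ absorbed by the first marginal factor. It also uses dominated convergence on $(0,2n]^{d-\ell}$, and $W_n\le Ce^{-an}$ once some continuous coordinate exceeds $2n$.

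You take a different route only for the integrand off $(0,2n]^{d-\ell}$. For atomic-prefix paths the paper avoids pointwise bounds there. It notes that $\wt P^{(n)}_{-s_\ell,-s_{\ell+1}}(u^{(n)}_{-s_\ell,r_\ell},\cdot)$ has total mass one, finitely many convergent atoms and a continuous part of constant sign, and that the later $c,c$ kernels are probability measures, so $\int|\wt\pi^{(n),*}|=O(1)$. You instead extend the crude $e^{O(\sqrt n)}$ bound from the proof of Lemma~\ref{lem:Zn}. This is weaker but still enough against $e^{-an}$, and it also covers the marginal tail in $\Phi^{(n)}_\emptyset$, which the paper leaves as `similar'. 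Likewise, your remark that each negative-atom coordinate contributes $O(n^{-1})^{n_k-n_{k-1}}$ to $W_n$ is sharper than the paper's $e^{-an}$. It makes very crude mass and kernel bounds sufficient for $\Phi^{(n),-}$.

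One step of your crude route needs repair. For the marginal density $\wt\pi^{(n),c}_{-s_1}$ on $(2n,4n)$ the relevant factors are $R_{q,1+s_1/2}R_{q,1-s_1/2}$. The bound $\log R_{q,x}(\theta)\le C\sqrt n$ fails for $x=1-s_1/2\in\Z_{\le0}$, i.e.\ $s_1\in 2\Z_{>0}$. The hypotheses allow this, e.g.\ $d=1$, $c_1=2$, $u=v=1/2$. In that case $R_{q,-m}(\theta)$ contains the factor $\sec^2(\theta/2)=(1-y/(4n))^{-1}$. Since \eqref{eq:BKW} has already discarded the zero of $|(e^{2\i\theta};q)_\infty|^2$ at $\theta=\pi$, your bound behaves like $(1-y/(4n))^{-3/2}$, which is not integrable at $y=4n$.

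The density itself is harmless: the factor $|1+e^{\i\theta}|^{-2}$ coming from $(-q^{1-s_1/2}e^{\i\theta};q)_\infty$ cancels against $|1-e^{2\i\theta}|^2$. So in this case keep the factor $4\sin^2\theta$ in $|\Gamma_q(\i r)|^{-2}=4\sin^2\theta\,(1-q)^{-2}|\Gamma_q(1+\i r)|^{-2}$ rather than bounding it by $\theta^2$. Your bound is fine for every kernel factor (where $x=1+s_{k+1}/2>1$) and for non-integer $x$.
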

\begin{proof}
We recall the path decomposition of the numerator in \eqref{eq:prelimit path decomp} here for convenience:
\[
\Phi^{(n)}
=
\Phi^{(n),-}+\Phi_{\emptyset}^{(n)}
+\sum_{\ell=1}^{j}\ \sum_{\vvr\in\mathbb S_\ell\topp i}
 \Phi_{\vvr}^{(n)}
+\sum_{\ell=j+1}^{d}\ \sum_{\vvr\in\mathbb S_\ell\topp{i-1}}
 \Phi_{\vvr}^{(n)}.
\]
The claimed result shall follow from the following:   for each admissible $\vvr = (r_1,\dots,r_\ell), \ell=1,\dots,d$, or $\vvr = \emptyset$, 
\begin{equation}\label{eq:numerator one path}
\lim_{n\to\infty}n^{u+v}\Phi_{\vvr}^{(n)}
=
\Gamma(u+v+2)e^{G(\vvx,\vvc)}H_{\vvr}(u),
\end{equation}
and
\[
\lim_{n\to\infty}n^{u+v}\Phi^{(n),-}=0.
\]
\bigskip

\noindent(i) We first  prove \eqref{eq:numerator one path} for each admissible $\vvr = (r_1,\dots,r_\ell), \ell=1,\dots,d$.
The case $\ell=d$ follows from a straightforward calculation (note that no integral is involved in this case) and the details are omitted.

Assume $\ell=1,\dots,d-1$ from now on. 
Recall that \[
W_n(\vvy)=\prod_{k=1}^d\pp{\frac{\cosh(s_k/\sqrt n)+1-y_k/(2n)}{2}}^{n_k-n_{k-1}},
\]
and $\Phi_\vvr\topp n = \int_{\Lambda_\vvr^{(d,n)}}W_n(\vv y)\wt\pi\topp n_{-s_1,\dots,-s_d}(\d\vvy)$
with
\[
\Lambda_\vvr^{(d,n)} = \ccbb{u_{-s_1,r_1}^{(n)}}\times\cdots
\times\ccbb{u_{-s_\ell,r_\ell}^{(n)}}
\times(0,4n)^{d-\ell}.
\]
It is straightforward to check that 
\[
\limn W_n(\vvy) = 
e^{G(\vvx,\vvc)}
\exp\left(
-\frac14\sum_{k=1}^d(x_k-x_{k-1})y_k
\right)=:W(\vvy), \vvy\in(0,\infty)^d.
\]
Then, to show \eqref{eq:numerator one path} it is equivalent to show
\equh\label{eq:one path'}
\limn n^{u+v}\int_{\Lambda_\vvr^{(d,n)}} W_n(\vvy)\wt\pi_{-s_1,\dots,-s_d}\topp n(\d\vvy) = \Gamma(u+v+2)\int_{\Lambda_\vvr\topp d}W(\vvy)\mu_{-s_1,\dots,-s_d}^{u,v}(\d\vvy).
\eque
Note that with $\vvr = (r_1,\dots,r_\ell)$ for some $\ell=1,\dots,d$, $\vvy\in\Lambda_\vvr^{(d,n)}$ implies that 
\[
y_k = u_{-s_k,r_k}\topp n \mbox{ for } k=1,\dots,\ell \qmand y_{k'}\in(0,4n) \mbox{ for } k'=\ell+1,\dots,d.
\]
To show \eqref{eq:one path'},  we shall apply the dominated convergence theorem to $(d-\ell)$-multiple integrals.  
Fix $\vvr = (r_1,\dots,r_\ell)$ for $\ell=1,\dots,d-1$. We first remark that the integral on the left-hand side can be expressed as a $(d-\ell)$-multiple integral over $(0,4n)$. Indeed, writing
$\vvy_{i:j} = (y_i,\dots,y_j)$, $\d\vvy = \d y_i\cdots\d y_j$, and introducing
\begin{align*}
\wt W_{\vvr,n}(\vvy_{\ell+1:d})& := W_n\pp{u_{-s_1,r_1}\topp n,\dots,u_{-s_\ell,r_\ell}\topp n, \vvy_{\ell+1:d}},\\
\wt W_\vvr(\vvy_{\ell+1:d}) &:=W\pp{u_{-s_1,r_1},\dots,u_{-s_\ell,r_\ell},\vvy_{\ell+1:d}},
\end{align*}
we have  \begin{multline}\label{eq:DCT}
n^{u+v}\int_{\Lambda_\vvr^{(d,n)}} W_n(\vvy)\wt\pi_{-s_1,\dots,-s_d}\topp n(\d\vvy) \\
= n^{u+v}\Pi_\vvr\topp n \int_{(0,4n)^{d-\ell}}\wt W_{\vvr,n}\pp{\vvy_{\ell+1:d}}\wt\pi^{(n),*}_{-s_\ell,\dots,-s_d}\pp{u\topp n_{-s_\ell,r_\ell},y_{\ell+1:d}}\d\vvy_{\ell+1:d},
\end{multline}
where 
\[
\Pi_\vvr\topp n :=\wt\pi_{-s_1}^{(n),d}\pp{u\topp n_{-s_1,r_1}}\prodd j1{\ell-1} \wt P_{-s_j,-s_{j+1}}^{(n),d,d}\pp{u\topp n_{-s_j,r_j},u\topp n_{-s_{j+1},r_{j+1}}},
\]
and
\equh\label{eq:pi}
\wt\pi^{(n),*}_{-s_\ell,\dots,-s_d}\pp{u\topp n_{-s_\ell,r_\ell},\vvy_{\ell+1:d}} = \wt P_{-s_\ell,-s_{\ell+1}}^{(n),d,c}\pp{u\topp n_{-s_\ell,r_\ell},y_{\ell+1}}\prodd j{\ell+1}{d-1}\wt P^{(n),c,c}_{-s_j,-s_{j+1}}(y_j,y_{j+1}).
\eque
One readily checks that
\equh\label{eq:weight conv}
\limn n^{u+v}\Pi_\vvr\topp n = \Gamma(u+v+2)\mathfrak p_{-s_1}^{u,v,d}(u_{-s_1,r_1})\prodd j1{\ell-1}\mathfrak p_{-s_j,-s_{j+1}}^{u,v,d,d}(u_{-s_j,r_j},u_{-s_{j+1},r_{j+1}}),
\eque
\[
\limn \wt W_{\vvr,n}\pp{\vvy_{\ell+1:d}} = \wt W_\vvr\pp{\vvy_{\ell+1:d}}.
\]
and
\begin{align*}
\limn\wt\pi^{(n),*}_{-s_\ell,\dots,-s_d}\pp{u\topp n_{-s_\ell,r_\ell},\vvy_{\ell+1:d}} &= \mathfrak p_{-s_\ell,-s_{\ell+1}}^{u,v,d,c}\pp{u_{-s_\ell,r_\ell},y_{\ell+1}}\prodd j{\ell+1}{d-1}\mathfrak p_{-s_j,-s_{j+1}}^{u,v,c,c}(y_j,y_{j+1})\\
&=:
\mu_{-s_\ell,\dots,-s_d}^{u,v,*}(u_{-s_\ell,r_\ell},\vv y_{\ell+1:d}). 
\end{align*}
The pointwise convergence of \eqref{eq:DCT} and hence \eqref{eq:one path'} now follows. 

It remains to show that the integrand is bounded by an integrable function. 
Again fix $\vvr$ as before. 
Set 
\[
\delta:=\min_{ k=\ell+1,\dots,d}(x_k-x_{k-1})>0.
\]
For the $(d-\ell)$-multiple integral on the right-hand side, we first decompose it into two with respectively domains 
\[
(0,2n)^{d-\ell} \qmand \Omega_n\topp{d-\ell}:=(0,4n)^{d-\ell}\setminus(0,2n)^{d-\ell}.
\] 
For
\[
\int_{(0,2n)^{d-\ell}}\wt W_{\vvr,n}\pp{\vvy_{\ell+1:d}}\wt\pi^{(n),*}_{-s_\ell,\dots,-s_d}\pp{u\topp n_{-s_\ell,r_\ell},\vvy_{\ell+1:d}}\d\vvy_{\ell+1:d},
\]
one readily checks that for all $\vvy_{\ell+1:d}\in(0,2n)^{d-\ell}$, we have 
\[
\wt W_{\vvr,n}(\vvy_{\ell+1:d})
\leq C\exp\left(
-\frac{\delta}{8}\sum_{k=\ell+1}^d y_k
\right).
\]
Moreover, by \eqref{eq:transition-cc-uniform} and
\eqref{eq:transition-dc-uniform}, we have
\[
\abs{\wt\pi^{(n),*}_{-s_\ell,\dots,-s_d}\spp{u\topp n_{-s_\ell,r_\ell},\vvy_{\ell+1:d}}}\le C\prodd j{\ell+1}{d} \sqrt{y_{j}}e^{C\sqrt{y_{j}}}J_n(y_j),
\]
Since $J_n(y_j)\leq\sqrt2$ on $(0,2n]$, we have
\begin{multline*}
\abs{\wt W_{\vvr,n}\pp{\vvy_{\ell+1:d}}\wt\pi^{(n),*}_{-s_\ell,\dots,-s_d}\pp{u\topp n_{-s_\ell,r_\ell},\vvy_{\ell+1:d}}\inddd{\vvy_{\ell+1:d}\in(0,2n)^{d-\ell}} }\\
\le C\prod_{j=\ell+1}^d
\sqrt{y_j}
\exp\left(C\sqrt{y_j}-\frac{\delta}{8}y_j\right),
\end{multline*}
and the upper bound is an integrable function over $(0,4n)^{d-\ell}$. 
This function is integrable on $(0,\infty)^{d-\ell}$. The dominated convergence theorem now yields
\begin{multline}\label{eq:numerator-one-path-low}
\limn \int_{(0,2n)^{d-\ell}}\wt W_{\vvr,n}\pp{\vvy_{\ell+1:d}}\wt\pi^{(n),*}_{-s_\ell,\dots,-s_d}\pp{u\topp n_{-s_\ell,r_\ell},\vvy_{\ell+1:d}}\d\vvy_{\ell+1:d}\\
= \int_{(0,\infty)^{d-\ell}}\wt W_\vvr(\vvy_{\ell+1:d})\mu_{-s_{\ell},\dots,-s_d}^{u,v,*}(u_{-s_\ell,r_\ell},\d\vvy_{\ell+1:d}).
\end{multline}

Next, for $\vvy_{\ell+1:d}\in \Omega_n\topp{d-\ell}:=(0,4n)^{d-\ell}\setminus(0,2n)^{d-\ell}$, assume $k\in\{\ell+1,\dots,d\}$ is such that $y_k\ge 2n$. Then, 
\equh\label{eq:away}
\frac{\cosh(s_k/\sqrt n)+1-y_k/(2n)}{2}
\leq \frac12+\frac Cn.
\eque
One can show that for some $a>0$, 
\[
\abs{\wt W_{\vvr,n}(\vvy_{\ell+1:d})}\leq Ce^{-an}.
\]
This time, to bound $\wt\pi^{(n),*}_{-s_\ell,\dots,-s_d}(u_{-s_\ell,r_\ell}\topp n,\vv y_{\ell+1:d})$ in \eqref{eq:pi}, we need the following facts: $\wt P_{-s_\ell,-s_{\ell+1}}^{(n)}(u\topp n_{-s_\ell,r_\ell},\cdot)$ is a signed measure that integrates to one,  the density function of the continuous part (over $(0,4n)$) has the same sign, there are a finite number of atoms, the mass on each atom converges (and hence is of order O(1)), and therefore the integral of the continuous part $\wt P_{-s_\ell,-s_{\ell+1}}^{(n),d,c}(u_{-s_\ell,r_\ell}\topp n,\cdot)$ is also of order $O(1)$. Then, we can simply bound the integral of the function $|\wt\pi^{(n),*}_{-s_\ell,\dots,-s_d}(u_{-s_\ell,r_\ell}\topp n,\vv y_{\ell+1:d})|$ by a constant. 
That is,
\begin{multline*}
\int_{(0,4n)^{d-\ell}\setminus (0,2n)^{d-\ell}}\abs{\wt W_{\vvr,n}\pp{\vvy_{\ell+1:d}}\wt\pi^{(n),*}_{-s_\ell,\dots,-s_d}\pp{u\topp n_{-s_\ell,r_\ell},\vvy_{\ell+1:d}}}\d\vvy_{\ell+1:d}\\
\le Ce^{-a n}\int_{(0,4n)^{d-\ell}\setminus (0,2n)^{d-\ell}}\abs{\wt\pi^{(n),*}_{-s_\ell,\dots,-s_d}\pp{u\topp n_{-s_\ell,r_\ell},\vvy_{\ell+1:d}}}\d\vvy_{\ell+1:d}\le Ce^{-an} \to 0,
\end{multline*}
as $n\to\infty$. Combining this with \eqref{eq:weight conv}, \eqref{eq:numerator-one-path-low}, we have proved \eqref{eq:one path'}. \bigskip

\noindent(ii) The case
\[
\lim_{n\to\infty}n^{u+v}\Phi_{\emptyset}^{(n)}
=
\Gamma(u+v+2)e^{G(\vvx,\vvc)}H_{\emptyset}(u),
\]
follows by a similar argument. There are no atoms in the path, and hence the corresponding convergence of integral formula concerns $d$-multiple integrals. Again we start by decomposing the domain into $(0,2n)^d$ and $(0,4n)^d\setminus(0,2n)^d$. We omit details. \bigskip

\noindent(iii) It remains to show 
\equh\label{eq:negative atoms}
n^{u+v}\Phi^{(n),-} \equiv n^{u+v} \int_{\R^d\setminus(-\infty,4n)^d}W_n(\vvy)\wt\pi_{-s_1,\dots,-s_d}\topp n(\d\vvy)\to 0
\eque
as $n\to\infty$. This case can be further decomposed into the sum of integrals such that each integral is over a path that starts at an atom  corresponding to $\mathsf n\topp n_{\wt s_{n,1},j}$ (before the change of variables) for some $j$ (recall \eqref{eq:mathfrak d}). 
We write
\[
\wt{\mathsf n}_{-s_j,k}\topp n := 2n(1-\mathsf n\topp n_{\wt s_{n,j},k})>4n, \quad k=0,\dots,\ell_{-s_j}. 
\]
The control \eqref{eq:away} applies again with $y_k = \wt{\mathsf n}_{-s_j,k}\topp n$ and this time we have
\[
W_n\pp{\wt{\mathsf n}_{-s_1,r_1}\topp n,\dots,\wt {\mathsf n}_{-s_\ell,r_\ell}\topp n,\vvy_{\ell+1:d}}\le C e^{-an}.
\]
Now for the measure $\wt\pi_{-s_1,\dots,-s_d}\topp n$, the marginal measure (necessarily on an atom $\wt{\mathsf n}_{-s_1,j}$) is of order $n^{-2}$ (when the corresponding weight is non-zero; otherwise there is nothing to prove), and one can check that the integration of the transitional laws can be bounded by a constant in a similar way as before.  Therefore, \eqref{eq:negative atoms} follows. We omit the details. 
\end{proof}

\begin{proof}[Proof of Theorem~\ref{thm:main}]
First assume that $(u,v),\vvx,\vvc$ satisfy the assumptions of Proposition~\ref{prop:Phin}. By Theorem~\ref{thm:AC}, $K_{u,v}=\wt K_{u,v}>0$, and hence $\Gamma(u+v+2)K_{u,v}\ne0$. Combining \eqref{eq:varphi}, Lemma~\ref{lem:Zn}, and Proposition~\ref{prop:Phin}, we obtain
\begin{align*}
\limn \esp\exp\pp{-\summ j1d c_jh_n\topp{u,v}(x_j)}
&=e^{G(\vvx,\vvc)}\frac{\Psi_{u,v}(\vvx,\vvc)}{K_{u,v}}=e^{G(\vvx,\vvc)}\frac{\wt\Psi_{u,v}(\vvx,\vvc)}{\wt K_{u,v}}\\
&=\esp\exp\pp{-\summ j1d c_j\pp{\frac1{\sqrt2}\B_{x_j}+\zeta_{x_j}\topp{u,v}}},
\end{align*}
where the second equality follows from Theorem~\ref{thm:AC}, and the last equality follows from \eqref{eq:Laplace fdd} (recall the formula for $G$ in \eqref{eq:G} and the independence of $\B$ and $\zeta\topp{u,v}$).

Proposition~\ref{prop:sandwich} now extends this convergence to every $(u,v)\in\R^2$, provided that $c_1,\dots,c_d>0$, $\sum_{k=1}^{d-1}c_k<1$, and $c_d>-2v$. For each fixed $(u,v)$, the admissible set of $\vvc$ includes a nonempty open subset of $\R^d$. It is known that the convergence of finite-dimensional Laplace transforms for coefficients $\vvc$ from a nonempty open subset of $\R^d$ implies the convergence of the corresponding finite-dimensional distributions \citep[Appendix~A]{bryc19limit}.
\end{proof}

\begin{remark}\label{rem:proof for fan}
We essentially have a (simplified) proof for the convergence in the fan region. In addition to $-c_d<2v$, in the fan region one could impose further
\equh\label{eq:fan cond}
-2u<-s_1<\cdots<-s_d<2v.
\eque
This condition can always be achieved when $u>0$ and $ u+v>0$, and in this case the auxiliary measures for the numerator do not involve any atoms near 1 as $C_nq^{-s_j/2}<1$  (there might be a few near $-1$, which would be much easier to deal with). Under \eqref{eq:fan cond}, all the analysis concerning atoms associated to $C_n$ are no longer needed (since there are none), and the analysis of the remaining terms in the decomposition remains the same. This leads to the proof of convergence of the Laplace transform for $u>0,u+v>0$. For the case $u\le 0, u+v>0$ the proof follows by a particle-hole duality argument. The calculations involved this way are significantly simpler than the ones in \citet{corwin24stationary} at a few places; recall Remark \ref{rem:CK}. 
\end{remark}

\appendix

\section{Some identities in the fan region}\label{sec:identity fan}

For $(u,v)\in\mathcal D\cap\R^2$ with $u+v>0$, we first show that
\equh\label{eq:K frak K}
K_{u,v} =\mathfrak K_{u,v}.
\eque
For $u,v\in\R$ with $u+v>0$, we define
\[
C_{u,v}:=\int_\R\int_\R e^{-2vx-2uy}p_1(x,y)\d x\d y,
\]
where
\[
p_t(x,y) = \frac2\pi\int_0^\infty e^{-tw^2/4}K_{\i w}(e^{-x})K_{\i w}(e^{-y}) \frac{\d w}{|\Gamma(\i w)|^2}
\]
is the Yakubovich heat kernel, defined for $t>0$ and $x,y\in\R$. Here $K_\nu$ denotes the modified Bessel function of the second kind. The integral defining $C_{u,v}$ is finite whenever $u,v\in\R$ and $u+v>0$. Applying \cite[Eq.~(1.14), corrected arXiv version]{bryc22markov} with $\mathsf a=2v$, and $\mathsf c=2u$, and $\tau=1$, yields, for $u+v>0$,
\equh\label{eq:BK}
\mathfrak K_{u,v} = \frac{2^{1-2u-2v}}{\Gamma(u+v)} C_{u,v}.
\eque

The convention for the Yakubovich heat kernel in
\citep{bryc23markov} differs from the one used above. If
$p_t^{\mathrm{BKWW}}$ denotes the kernel used in that paper, then $p_{t/4}^{\mathrm{BKWW}}(-x,-y)=p_t(x,y)$.
Consequently, with $\mathsf a=2v$ and $\mathsf c=2u$, the constant
$C_{\mathsf a,\mathsf c}^{(1/4)}$ in the notation of
\citep{bryc23markov} equals $C_{u,v}$. 
With $\mathsf a=2v$, $\mathsf c=2u$, and $\tau=1/4$, our normalization
is related to that of \citep{bryc23markov} by $K_{u,v}=\mathfrak K_{2v,2u}^{(1/4)}/\Gamma(u+v+2)$.
Therefore, \cite[Proposition~5.1]{bryc23markov} and the preceding
conversion of the Yakubovich heat kernel imply
\[
K_{u,v} = \frac{(2u+2v)(2u+2v+2)} {2^{2u+2v+1}\Gamma(u+v+2)}C_{u,v}.
\]
Using $\Gamma(u+v+2)=(u+v)(u+v+1)\Gamma(u+v)$, for $u+v>0$, this simplifies to
\equh\label{eq:BKWW}
K_{u,v} = \frac{1}{2^{2u+2v-1}\Gamma(u+v)}C_{u,v}.
\eque

Comparing \eqref{eq:BK} and \eqref{eq:BKWW} proves \eqref{eq:K frak K} for the above parameters. The derivation of \eqref{eq:BK} uses the Hartman--Watson density, whereas \cite[Proposition~5.1]{bryc23markov} obtains \eqref{eq:BKWW} by comparing explicit normalizing constants. The duality formula for Laplace transforms will instead be used below to identify the multipoint transform.

Next, for the analytic-continuation argument, it is sufficient to show $\Psi_{u,v}(\vvx,\vvc)=\wt\Psi_{u,v}(\vvx,\vvc)$, for
$u,v\in\R$, such that $u>s_1/2, v>0$. The desired identity follows from \cite[Theorem~1.3]{bryc23markov} and \cite[Theorem~1.2]{bryc22markov}, after accounting for the normalization difference and the time-reversal relation established in the following remark.

\begin{remark}
Set $s_{d+1}:=0$ and let $\widetilde{\mathfrak p}_t^{u,v}$ and $\widetilde{\mathfrak p}_{s,t}^{u,v}$ denote, respectively, the marginal measures and transition kernels of \citep{corwin24stationary}, extended to $t<2u$ and $s<t<2u$ as summarized in \cite[Appendix~C]{bryc23markov}, with $(\mathsf a,\mathsf c)=(2v,2u)$. For $0\le t\le s_1$, the marginal measures satisfy $\widetilde{\mathfrak p}_t^{u,v} =\Gamma(u+v+2)\mathfrak p_{-t}^{u,v}$. The joint measures satisfy the following identity on $\R^d$ in the variables $(y_1,\dots,y_d)$:
\equh\label{eq:time reversal}
{\mathfrak p}_{-s_1}^{u,v}(\d y_1)
\prodd k1{d-1}
{\mathfrak p}^{u,v}_{-s_k,-s_{k+1}}(y_k,\d y_{k+1})
=
\frac{1}{\Gamma(u+v+2)}
\int_{\R}
\widetilde{\mathfrak p}_0^{u,v}(\d y_{d+1})
\prod_{k=d,d-1,\dots,1}
\widetilde{\mathfrak p}^{u,v}_{s_{k+1},s_k}
(y_{k+1},\d y_k).
\eque

Since $u-s_1/2>0$ and $v>0$, the marginal measures and transition kernels at all the relevant times have only continuous parts.

For $0\le a<b\le s_1$, a direct comparison of the extended continuous densities in \cite[Appendix~C]{bryc23markov} gives the following equality of measures on $\R^2$ in the variables $(x,y)$. On the original time range considered in \citep{corwin24stationary}, this is precisely \cite[(6.15)]{corwin24stationary}:
\equh\label{eq:one step reversal}
\mathfrak p_{-b}^{u,v}(\d y)\, \mathfrak p_{-b,-a}^{u,v}(y,\d x) = \mathfrak p_{-a}^{u,v}(\d x)\,\widetilde{\mathfrak p}_{a,b}^{u,v}(x,\d y).
\eque
Applying \eqref{eq:one step reversal} successively with $(a,b)=(s_{k+1},s_k)$, $k\in\llbracket d\rrbracket$, yields the following equality of measures on $\R^{d+1}$ in the variables $(y_1,\dots,y_{d+1})$:
\[
\mathfrak p_{-s_1}^{u,v}(\d y_1)\prod_{k=1}^{d}\mathfrak p_{-s_k,-s_{k+1}}^{u,v}(y_k,\d y_{k+1})=\mathfrak p_0^{u,v}(\d y_{d+1})\prod_{k=d,d-1,\dots,1}\widetilde{\mathfrak p}_{s_{k+1},s_k}^{u,v} (y_{k+1},\d y_k).
\] 
Using $\mathfrak p_0^{u,v}= \widetilde{\mathfrak p}_0^{u,v}/\Gamma(u+v+2)$ and integrating out $y_{d+1}$ proves
\eqref{eq:time reversal}.
\end{remark}
Using \eqref{eq:time reversal}, we apply
\cite[Theorem~1.3]{bryc23markov} to express $\Psi_{u,v}(\vvx,\vvc)/K_{u,v}$ as a Laplace transform of increments of the Markov process defined there. We take $\mathsf a =2v,\mathsf c=2u,\tau=1/4$, and $t_k=x_k$. 
The marginal measure normalization above yields
\begin{align*}
\mathfrak K_{2v,2u}^{(1/4)} &= \int_{\R}e^{-y/4}\widetilde{\mathfrak p}_0^{u,v}(\d y) = \Gamma(u+v+2) \int_{\R}e^{-y/4}\mathfrak p_0^{u,v}(\d y) = \Gamma(u+v+2)K_{u,v}.
\end{align*}
With this normalization, we obtain
\[
\frac{\Psi_{u,v}(\vvx,\vvc)}{K_{u,v}} = \mathbb E\exp\left(-\sum_{k=1}^d c_k \bigl(Y^{\mathrm{BKWW}}_0-Y^{\mathrm{BKWW}}_{x_k/4}\bigr) \right) ,
\]
where $Y^{\mathrm{BKWW}}$ denotes the Markov process introduced in \cite[Theorem~1.1]{bryc23markov} again with parameters $\mathsf a=2v,\mathsf c=2u$, and $\tau=1/4$. Let
$Y^{\mathrm{BK}}$ denote the Markov process defined in
\cite[(1.8)--(1.12)]{bryc22markov}, with parameters
$\mathsf a=2v,\mathsf c=2u$, and $\tau=1$. 
The change of time scale and sign described immediately before
\cite[(1.7)]{bryc22markov} yields the equality in distribution
\[
\left(Y^{\mathrm{BKWW}}_0-Y^{\mathrm{BKWW}}_{x/4}\right)_{x\in[0,1]}
\stackrel{\mathrm d}=
\left(Y^{\mathrm{BK}}_x-Y^{\mathrm{BK}}_0\right)_{x\in[0,1]}.
\]
By \cite[Theorem~1.2]{bryc22markov} and the definition of
$\zeta\topp{u,v}$ through its Radon--Nikodym density, the latter
process has the same law as
$\bigl(\zeta_x\topp{u,v}\bigr)_{x\in[0,1]}$.  Consequently,
\[
\frac{\Psi_{u,v}(\vvx,\vvc)}{K_{u,v}} = \mathbb E \exp\left(-\sum_{k=1}^d c_k\zeta_{x_k}\topp{u,v}\right)  = \frac{\widetilde\Psi_{u,v}(\vvx,\vvc)}{\widetilde K_{u,v}}.
\]
Finally, \eqref{eq:K frak K} implies $K_{u,v}=\mathfrak K_{u,v}=\widetilde K_{u,v}>0$. Therefore, we conclude that $\Psi_{u,v}(\vvx,\vvc)=\widetilde\Psi_{u,v}(\vvx,\vvc)$.

\bibliographystyle{apalike}
  \bibliography{references,references18}
\end{document}